\newtheorem{theorem}{Theorem}
\newtheorem{corollary}[theorem]{Corollary}
\newtheorem{lemma}[theorem]{Lemma}
\newtheorem{proposition}[theorem]{Proposition}
\theoremstyle{definition}
\newtheorem{remark}[theorem]{Remark}
\newtheorem{example}[theorem]{Example}
\newcommand{\IM}{\mbox{\rm Im\,}}
\newcommand{\RE}{\mbox{\rm Re\,}}
\DeclareMathOperator{\HHH}{H}
\newcommand{\hau}{d_{\HHH}}
  \DeclareMathOperator{\dom}{dom}
\DeclareMathOperator{\ii}{i}
\DeclareMathOperator{\p}{p}
\DeclareMathOperator{\Int}{Int}
\DeclareMathOperator{\conv}{conv}
\DeclareMathOperator{\dist}{dist}
\DeclareMathOperator{\ap}{ap}
\newcommand{\Real}{\mathbb{R}}
\newcommand{\Comp}{\mathbb{C}}
\newcommand{\Nat}{\mathbb{N}}
\newcommand{\Disc}{\mathbb{D}}
\newcommand{\eps}{\varepsilon}
\newcommand{\h}{\mathcal{H}}
\newcommand{\Kk}{\mathcal{K}}
\newcommand{\cB}{\mathcal{B}}
\newcommand{\set}[1]{\left\{#1\right\}}
\newcommand{\seq}[1]{\left<#1\right>}
\newcommand{\norm}[1]{\left\Vert#1\right\Vert}
\newcommand{\rest}[1]{\!\!\mid_{#1}}
 \providecommand{\seq}[1]{\left<#1\right>}
\providecommand{\norm}[1]{\left\Vert#1\right\Vert}
\newcommand {\mycomment}[1]{} 
\newcommand {\mat}  [1] {\left[\begin{array}{#1}}
\newcommand {\rix}      {\end{array}\right]}
\font\tenex=cmex10 
\newdimen\p@renwd
\def\bmat#1{\begingroup \m@th
  \setbox\z@\vbox{\def\cr{\crcr\noalign{\kern2\p@\global\let\cr\endline}}%
    \ialign{$##$\hfil\kern2\p@\kern\p@renwd&\thinspace\hfil$##$\hfil
      &&\quad\hfil$##$\hfil\crcr
      \omit\strut\hfil\crcr\noalign{\kern-\baselineskip}%
      #1\crcr\omit\strut\cr}}%
  \setbox\tw@\vbox{\unvcopy\z@\global\setbox\@ne\lastbox}%
  \setbox\tw@\hbox{\unhbox\@ne\unskip\global\setbox\@ne\lastbox}%
  \setbox\tw@\hbox{$\kern\wd\@ne\kern-\p@renwd\left[\kern-\wd\@ne
    \global\setbox\@ne\vbox{\box\@ne\kern2\p@}%
    \vcenter{\kern-\ht\@ne\unvbox\z@\kern-\baselineskip}\,\right]$}%
  \null\;\vbox{\kern\ht\@ne\box\tw@}\endgroup}
\newif\ifMDlatex
\def\MD@us#1{\csname#1style\endcsname}
\def\MD@uf#1{\csname#1font\endcsname}
\def\MD@t{text}
\def\MD@s{script}
\def\MD@ss{scriptscript}
\newdimen\MD@unit
\def\MD@changestyle#1{
  \relax\MD@unit0.1\fontdimen6\MD@uf{#1}0
  \everymath\expandafter{\the\everymath\MD@us{#1}}
}
\def\MD@dot{$\m@th\ldotp$}
\def\MD@palette#1{\mathchoice{#1\MD@t}{#1\MD@t}{#1\MD@s}{#1\MD@ss}}
\def\MD@ddots#1{{\MD@changestyle{#1}%
  \mkern1mu\raise7\MD@unit\vbox{\kern7\MD@unit\hbox{\MD@dot}}%
  \mkern2mu\raise4\MD@unit\hbox{\MD@dot}%
  \mkern2mu\raise \MD@unit\hbox{\MD@dot}\mkern1mu}}%
\def\MD@iddots#1{{\MD@changestyle{#1}%
  \mkern1mu\raise \MD@unit\hbox{\MD@dot}%
  \mkern2mu\raise4\MD@unit\hbox{\MD@dot}%
  \mkern2mu\raise7\MD@unit\vbox{\kern7\MD@unit\hbox{\MD@dot}}}}%
\def\MD@vdots#1{\vbox{\MD@changestyle{#1}%
    \baselineskip4\MD@unit\lineskiplimit\z@
    \kern6\MD@unit\hbox{\MD@dot}\hbox{\MD@dot}\hbox{\MD@dot}}}%
  \DeclareRobustCommand\ddots{\mathinner{\MD@palette\MD@ddots}}%
  \DeclareRobustCommand\iddots{\mathinner{\MD@palette\MD@iddots}}%
  \DeclareRobustCommand\vdots{\mathinner{\MD@palette\MD@vdots}}%
  \def\ddots{\mathinner{\MD@palette\MD@ddots}}%
  \def\iddots{\mathinner{\MD@palette\MD@iddots}}%
  \def\vdots{\mathinner{\MD@palette\MD@vdots}}%
\newcommand{\matp}[1]{\begin{bmatrix} #1 \end{bmatrix}}
\begin{document}
\title
{Between the von Neumann inequality and the Crouzeix conjecture. }
\author{Patryk Pagacz, Pawe{\l} Pietrzycki, and Micha{\l} Wojtylak
}

   \subjclass[2010]{Primary 	47A12, 	47A25, 47A63; Secondary
47A10, 47A20, 47A56} \keywords{von Neumann inequality, Crouzeix conjecture, deformed numerical range, class $C_\rho$ of power bounded operators}
  \address{Faculty of Mathematics and Computer Science, Jagiellonian University, {\L}ojasiewicza
6, 30-348 Krakow, Poland}
   \email{\{patryk.pagacz, pawel.pietrzycki, michal.wojtylak\}@im.uj.edu.pl}
\thanks{MW gratefully acknowledges the support of the Humboldt Foundation with a short research stay grant}

\begin{abstract}
    A new  concept of a deformed numerical range $W^\rho(T)$ is introduced. Here $T$ is a bounded linear operator or a matrix and $ \rho \in[1,+\infty)$ is a parameter. Each $W^\rho(T)$ is a closed convex set that contains the spectrum of $T$.
    Furthermore,  $W^\rho(T)$ is decreasing with respect to $ \rho $ and $W^2(T)$ coincides with the numerical range.
    It is also shown that $W^\rho(T)$ is contained in the closed unit disc if and only if $T$ has a $\rho$ unitary dilation in the sense of N\'agy-Foia\c s.
    The spectral constants of $W^\rho(T)$ is investigated, it is shown that it is monotone and continuous with respect to the parameter $ \rho $.
\end{abstract}
\maketitle

\section{Introduction}


The celebrated von Neumann inequality states  that if $T$ is a bounded operator on a complex Hilbert space $\h$ and $p(z)$ is a polynomial then
\begin{equation}\label{vonN}
\norm{p(T)}\leq \sup_{z\in\norm T\cdot\overline \Disc} |p(z)|,
\end{equation}
where  $\Disc$ stands for the open unit disc.
The second seminal result of interest  is the following one
\begin{equation}\label{Cro2}
\norm{p(T)}\leq \Psi_1(T)\sup_{z\in W(T)} |p(z)|,
\end{equation}
 where by  $W(T)$ we denote the numerical range of $T$, i.e.,
$$
W(T)=\{\langle Th,h\rangle: h\in \h\}.
$$
The constant $\Psi_1(T)$ on the right hand side was initially prove to exist in \cite{delyondelyon1999}.  Crouzeix in \cite{Crouzeix2007} established that $\Psi_1(T)\leq 11.08$ and conjectured that $\Psi_1(T)\leq 2$ for any bounded operator $T$.  The conjecture is true for $2\times 2$ matrices (see \cite{crouzeix2004bounds}) and a simple $2\times 2$ matrix  example show that the constant $2$ is the best possible.
Up to now the proof of Crouzeix conjecture is know only for some special cases (see \cite{ChoiLAA,GreenChoiLAA}). The current best estimate $\Psi_1(T)\leq 1+\sqrt 2$   was obtained   by  Crouzeix and  Palencia
 in \cite{crouzeix2017numerical}, see also Ransford and Schwenninger \cite{ransford2018remarks}.

Usually one expresses the  inequality \eqref{vonN} by saying that  the disc of radius $\norm T$ is a $1$-spectral set, analogously   the numerical range is a $(1+\sqrt 2)$-spectral set (cf. \eqref{Cro2}). The goal of the present paper is to construct intermediate spectral sets. For this aim we define the deformed numerical range $W^\rho(T)$ as the closed convex hull of
$$
\set{ \xi_{\rho}(h) \seq{Th,h}:\norm h=1},\quad  \rho \in[1,2],
$$
where

\begin{equation}\label{xiintro}
\xi_{\rho}(h)=\begin{cases}
                         1-\frac1\rho  +    \frac{\sqrt{ (1-\frac1\rho)^2|\seq{Th,h}|^2-( 1-\frac2\rho)\norm{Th}^2} }{|\seq{Th,h}|} & \mbox{: } \seq{Th,h}\not=0 \\
                    {\color{black} 1 }& \mbox{: }  \seq{Th,h}=0.
                   \end{cases}
                   \end{equation}

The definition for $ \rho \in[2,+\infty)$ is more technical, see Section \ref{def} for details.  Note that $W^2(T)$ equals (the closure of) the numerical range. Our main results concerning these sets are the following. In Theorem \ref{basic}
we show that the spectrum of $T$ is contained in $W^\rho(T)$, in Theorem \ref{monocont} we show that $W^\rho(T)$ is monotone and continuous in the Hausdorff metric with respect to $ \rho \in[1,+\infty)$. Furthermore, we show that there exists a constant $\Psi_\rho(T)$ such that
$$
\norm{p(T)}\leq \Psi_\rho(T)\sup_{z\in W^\rho(T)} |p(z)|,
$$
and that $\rho\mapsto \Psi_\rho(T)$ is continuous with respect to $\rho\in[1,+\infty)$, see Theorem \ref{qcont}.

Another direction of research is to analyse spectral constants connected with simple
regions containing the numerical range.
E.g,  it is known since \cite{okubo1975constants} that for any polynomial $p$
\begin{equation}\label{Cro1}
\norm{p(T)}\leq 2\sup_{\nu(T)\cdot \overline\Disc} |p|,
\end{equation}
where $\nu(T)$ stands for the numerical radius of $T$, i.e.
$ \nu(T):=\sup_{z\in W(T)} |z|.
$
In fact, the above inequality  holds with any disc containing $W(T)$ in place of $\nu(T)\overline\Disc$. 
 In our paper we show that
\begin{equation}\label{Croq}
\norm{p(T)}\leq\rho \sup_{\nu_\rho(T)\cdot \overline\Disc} |p|,
\end{equation}
where $\nu_\rho(T)$ stands for the deformed spectral radius of $T$, i.e.
$ \nu_\rho(T):=\sup_{z\in W^\rho(T)} |z|$ (cf. \cite{badea2018spectral}). This constitutes  a continuous passage between \eqref{vonN} and \eqref{Cro1}.
Our work in this direction was motivated other recent methods for obtaining spectral constants for discs by  Caldwell, Greenbaum and Lie \cite{caldwell2018some} and for annuli by Crouzeix and Greenbaum \cite{crouzeix2019spectral}.

Most of the results we obtained are true both for complex matrices and for bounded operators on Hilbert spaces, however, some of them concern only one of these classes. This influences the organisation of the paper.  And so, in Section \ref{def} we define the deformed numerical range and show its basic properties for matrices and operators. The main outcome,  the inclusion $\sigma(T)\subseteq W^\rho(T)$, is showed in the matrix case only.
Then, in Section \ref{simcase} we give  examples of $W^\rho(T)$ for $2\times 2$ matrices. The next part, Section \ref{Operators}, is devoted to quasinilpotent operators in a Hilbert space. Analysis of this class is necessary to complete the proof  of $\sigma(T)\subseteq W^\rho(T)$ in the operator case. Subsequently, in Section \ref{NF} we return to the general setting and show the connection of $W^\rho(T)$ with  the N\'agy-Foia\c s dilation theory.   In Section \ref{smono} we show the announced monotonicity and continuity of $W^\rho(T)$ with respect to $\rho$. This is applied in  Section \ref{s:specconst} to analyse the spectral constants $\Psi_\rho(T)$. In the last Section we present the relation of $W^\rho(T)$ to the known concepts from dilation theory: Mathias-Okubo formula for $\rho$-numerical range, the $q$-numerical range, normalised numerical range and Davies-Wielandt shell.

The following notation will be used. The
fields of real and complex
numbers are denoted by
$\mathbb{R}$ and $\mathbb{C}$, respectively. 
All Hilbert spaces considered in this paper are assumed to be complex, $\seq{f,g}$ and $\norm f$ stands for the inner product and the corresponding norm, respectively.
In the finite dimensional case we assume that $\mathcal H=\Comp^n$ and $\seq{x,y}$ stands for the usual inner product.
We denote by $\sigma(T)$ the spectrum of $T$ and by $\sigma_{\p}(T)$ and $\sigma_{\ap}(T)$ we mean the point spectrum (eigenvalues) and approximative spectrum of $T$, respectively.  Furthermore,  as the letters $\rho$ and $r$ are already reserved, we denote the spectral radius of $T$ by $\nu_\infty(T)$. By $\overline{V}$, $\Int V$ and $\partial V$ we mean the closure, the interior and the boundary of $V\subseteq\Comp$.
We will use without further notice the fact that one may interchange the order of taking the convex hull and the closure, i.e. $\conv \overline V=\overline{\conv V}$, for any bounded subset $V$ of the complex plain.
As it was already used, $r\Disc$ is an open disc centred at origin with radius $r$.

\section{Deformed numerical range: definition and basic properties}\label{def}
Let us begin with defining  the main objects. For $ \rho\in[1,+\infty)$ and $h\in\h$ with $\norm h=1$  we set
\begin{equation}\label{Deltadef}
\Delta_\rho(h):= r ^2|\seq{Th,h}|^2-4( r -1)\norm{Th}^2,\quad r=2\left(1-\frac1\rho\right).
\end{equation}
Here and in the sequel the auxiliary variable $r$ was introduced for better clarity of the formulas. Note that the mapping $[1,+\infty)\ni\rho\mapsto r\in[0,2)$ is a increasing bijection and  $\rho=2$ is mapped to $r=1$. Later on we see that $\rho=2$ corresponds to the numerical range. 
We denote the domain of the function $\xi_\rho$ as
\color{black}
\begin{equation}\label{dom_xi}
  \dom(\xi_\rho):=\{h\in\h :   \|h\|=1,\  ( \Delta_\rho(h) > 0 \text{ or } Th=0) \}.
\end{equation}
\color{black}
and we set
\color{black}
\begin{equation}\label{xidef}
\xi_\rho(h):=\begin{cases} \frac12\left( r +\frac{\sqrt{\Delta_ r (h)}}{|\seq{Th,h}|}\right) &: \seq{Th,h}\neq0\\
1 &: \seq{Th,h}=0
\end{cases},\quad h\in\dom(\xi_\rho),\ \rho\in[1,+\infty).
\end{equation}\color{black}

Note that   if $\seq{Th,h}=0$ and $Th\neq 0$ then $h\in\dom(\xi_\rho)$ precisely for $ \rho \in[1,2]$. Such vectors $h$ will require separate treatment in the course of the paper, especially in  Theorems \ref{basic} and \ref{Wdysk}.
Observe also that the definition of $\xi_\rho(h)$  in \eqref{xiintro} the Introduction  coincides with  \eqref{xidef} above, we will use  the  latter one for further reasoning. We now list the basic properties of the  functions $\xi_\rho$ and $\Delta_\rho$.

\begin{proposition}\label{basicxi} For any bounded operator $T$ on a Hilbert space the following holds.
\begin{enumerate}[\rm (i)]
\item\label{(i)} If $1\leq \rho_1\leq \rho_2 < +\infty$ then $\dom(\xi_{\rho_2})\subseteq \dom(\xi_{\rho_1})$ and $\xi_{\rho_2}(h)\leq \xi_{\rho_1}(h)$, for $h\in\dom(\xi_{\rho_2})$.
\item\label{(i')} If $0\leq \rho_1 < \rho_2 < 2$ and $h\in\dom(\xi_{\rho_2})$ with $\seq{Th,h}\neq 0$, then  $\xi_{\rho_2}(h)= \xi_{\rho_1}(h)$ if and only if $h$ is an eigenvector of $T$.
\item\label{(ii)} $\dom (\xi_\rho)=\set{h\in\h:\norm h=1}$ for $\rho\in[1,2]$. 
\item\label{(iii)} $\xi_1(h)=\frac{\norm{Th}}{|\seq{Th,h}|}$, $\xi_2(h)=1$ for $h\in\h$ with $\norm h=1$, $\seq{Th,h}\neq 0$.
\color{black}
\item\label{(iv)} If $Th=\lambda h$ for a unit $h$ and  some $\lambda\in\Comp $  then $h\in\dom(\xi_\rho)$ and $\xi_\rho(h)=1$ for $\rho\in[1,+\infty)$. 
\item\label{(v)} \color{black} If $Th\neq\lambda h$ for all $\lambda\in\Comp$ and $\norm h=1$ then $h\notin\dom(\xi_\rho)$ for $\rho\in[\rho_0,\infty)$ for some $\rho_0\in[2,+\infty)$.
\end{enumerate}
\end{proposition}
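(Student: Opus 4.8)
The plan is to reduce the claim to a sign analysis of $\Delta_\rho(h)$ viewed as a quadratic in the auxiliary variable $r=2(1-\tfrac1\rho)$. Since $Th\neq\lambda h$ for every $\lambda\in\Comp$, in particular for $\lambda=0$, we have $Th\neq0$; hence the case $Th=0$ in the definition \eqref{dom_xi} of $\dom(\xi_\rho)$ is excluded, and $h\in\dom(\xi_\rho)$ is equivalent to $\Delta_\rho(h)>0$. Thus it suffices to produce a finite $\rho_0\ge2$ with $\Delta_\rho(h)\le0$ for all $\rho\ge\rho_0$. Writing $a:=|\seq{Th,h}|^2$ and $b:=\norm{Th}^2$, I would regard
\[
\Delta_\rho(h)=a\,r^{2}-4b\,r+4b
\]
as a function of $r\in[0,2)$, recalling from the text that $\rho\mapsto r$ is an increasing bijection of $[1,+\infty)$ onto $[0,2)$ sending $\rho=2$ to $r=1$, so that $r\ge1$ corresponds exactly to $\rho\ge2$.

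The decisive input is the Cauchy--Schwarz inequality \emph{together with its equality case}: because $h$ is a unit vector that is not an eigenvector of $T$, the vectors $Th$ and $h$ are linearly independent, whence $a<b$ and $b>0$ strictly. First I would evaluate the quadratic at the right endpoint $r=2$, obtaining $\Delta=4(a-b)<0$; by continuity this already yields $\Delta_\rho(h)<0$ on a left neighbourhood of $r=2$, giving some finite threshold. To force the threshold into the range $r\ge1$ (equivalently $\rho\ge2$) I would look at the roots. When $a>0$ the parabola opens upward, and since its value at $r=2$ is negative, $r=2$ lies strictly between the two roots $r_\pm$; hence $\Delta_\rho(h)\le0$ precisely for $r\in[r_-,r_+]$, which on the admissible interval $[0,2)$ reads $r\in[r_-,2)$. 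When $a=0$ the expression degenerates to the linear $4b(1-r)$, whose sign changes at $r=1$.

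The only genuine computation is the verification that the smaller root $r_-$ satisfies $r_-\ge1$, so that the corresponding $\rho_0$ indeed lies in $[2,+\infty)$. Writing $r_-=\tfrac{2}{a}\bigl(b-\sqrt{b(b-a)}\bigr)$, the inequality $r_-\ge1$ rearranges to $2b-a\ge2\sqrt{b(b-a)}$, where the left-hand side is positive because $a<b<2b$; squaring (legitimate as both sides are nonnegative) collapses this to $a^{2}\ge0$, which is automatic, with equality exactly when $a=0$, in agreement with the degenerate case. I would then let $\rho_0$ be the value of $\rho$ attached to $r_-\in[1,2)$, note $\rho_0\in[2,+\infty)$, and conclude that $\Delta_\rho(h)\le0$, hence $h\notin\dom(\xi_\rho)$, for every $\rho\ge\rho_0$. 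The main, and quite modest, obstacle is organisational: running the degenerate case $\seq{Th,h}=0$ in parallel with the quadratic case and checking that the threshold is $\ge2$ rather than merely finite. Both are controlled by the single strict inequality $a<b$ coming from $h$ not being an eigenvector.
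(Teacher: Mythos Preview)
Your proposal addresses only part~(v); for that part it is correct and follows essentially the same idea as the paper: both use the strict Cauchy--Schwarz inequality $a:=|\langle Th,h\rangle|^2<\|Th\|^2=:b$ to analyse $\Delta_\rho(h)$ as a function of $r\in[0,2)$ and locate a threshold $r_0\in[1,2)$ beyond which $\Delta_\rho(h)\le 0$. The only difference is in execution: the paper rewrites
\[
\Delta_\rho(h)=(2-r)^2a-4(r-1)(b-a),
\]
from which one reads off immediately that on $[1,2)$ the expression is decreasing, equals $a\ge 0$ at $r=1$, and tends to $-4(b-a)<0$ as $r\to 2$; you instead keep the raw quadratic $ar^2-4br+4b$, locate its smaller root $r_-$ explicitly, and check $r_-\ge 1$ by squaring. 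Both routes are short and yield the same $\rho_0$; the paper's rewriting avoids the case split $a>0$ versus $a=0$ and the root computation, while your version makes the threshold fully explicit.
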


\begin{proof}
For a fixed unit $h$ we treat $\Delta_\rho(h)$ and $\xi_\rho(h)$ as functions of the parameter $r=2(1-\rho^{-1})$, see \eqref{Deltadef} above.  The first part of statement \eqref{(i)} follows from  the fact that  for a fixed $h$ with $\norm h=1$ we have
$$
\frac{d \Delta_\rho (h)}{d  r } = 2 ( r \ |\seq{Th,h}|^2-2\norm{Th}^2) \leq 0,\quad  r \in[0,2).
$$
Now let $h\in\dom(\xi_{\rho_2})$ and let $\seq{Th,h}\neq 0$.
An elementary calculation shows that
$$
\frac{d \xi_\rho(h)}{d  r }  \leq 0,\quad  r \in[0, r _2),
$$
and the equality holds if and only if $|\seq{Th,h}|=\norm{Th}$. This shows the second part of \eqref{(i)} and \eqref{(i')}.

Statements \eqref{(ii)} and \eqref{(iii)} are obvious.
\color{black}
To see \eqref{(iv)} note that $\Delta_\rho(h)=|\lambda|^2(2- r )^2 > 0$, for $\lambda\not=0$. As $ r \in[0,2)$, we have clearly $\xi_\rho(h)=1$ for $\rho\in[1,+\infty)$. For $Th=0$ \eqref{(iv)} follows from directly \eqref{dom_xi} and \eqref{xidef}.
\color{black}
Let us now show \eqref{(v)}. Take $h$ as in the statement, then $|\seq{Th,h} | < \norm{Th}$ and by elementary expression
\begin{equation}\label{DeltaRozpisana}
\Delta_ \rho (h)=(2- r )^2|\seq{Th,h}|^2-4( r -1)(\|Th\|^2-|\seq{Th,h}|^2)
\end{equation}
 we have $\Delta_ \rho (h)\leq 0$ for $ r \in[ r _0,2)$ for some $ r _0\in[1,2)$, which is exactly the claim.
\end{proof}

Further for $\rho\in[1,+\infty)$ we define the {\em deformed numerical range} of a nonzero operator $T$ as
\begin{equation}\label{dns}
W^\rho(T)=\conv \overline{\set{ \xi_\rho(h) \seq{Th,h}: h\in\dom(\xi_\rho) }}
\end{equation}
and \emph{the deformed numerical radius} as
\begin{equation}\label{dnr}
\nu_\rho(T)=\sup_{z\in W^\rho(T)}|z|.
\end{equation}

Observe that, almost trivially,
\begin{equation}\label{W2W}
W^2(T)=\overline{W(T)},
\end{equation}
where $W(T)$ stands for the numerical range of $T$.
Theorem \ref{basic} shows the main properties of the deformed numerical range.

\begin{theorem}\label{basic} For a bounded operator $T$  on a Hilbert space $\h$ and $ \rho\in[1,+\infty) $ the following holds.
\begin{enumerate}[\rm (i)]
\item\label{0} {\color{black} If $T=T^*$ then $W^\rho(U^*TU)\subseteq\Real$;}
\item\label{U} $W^\rho(U^*TU)=W^\rho(T)$  for any unitary operator $U$ on $\h$;
\item\label{alpha} $W^\rho(\alpha T)=\alpha W^\rho(T)$ for any $\alpha\in\Comp$;
\item\label{K} if $\Kk$ is a subspace of $\h$ invariant for $T$, then $W^\rho(T\rest\Kk)\subseteq W^\rho(T)$;
\item\label{sigmap} $ W^\rho(T)\subseteq \norm T \overline\Disc$;
\item\label{sigma} $\sigma(T)\subseteq W^\rho(T)$.

\end{enumerate}
\end{theorem}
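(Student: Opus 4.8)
The plan is to treat the six items in increasing order of difficulty, the first four being ``functorial'' properties that follow by comparing the defining data $\seq{Th,h}$, $\norm{Th}$, $\Delta_\rho(h)$, $\xi_\rho(h)$ of $T$ with those of the modified operator and then using that passing to the closed convex hull preserves inclusions and commutes with continuous real-affine images. For \eqref{0}, note that $U^*TU$ is self-adjoint whenever $T$ is, and for any self-adjoint $S$ one has $\seq{Sh,h}\in\Real$ for every $h$, while $\xi_\rho(h)$ is real by \eqref{xidef}; hence every generating point $\xi_\rho(h)\seq{Sh,h}$ is real, and $W^\rho(S)$, the closed convex hull of a subset of $\Real$, lies in $\Real$. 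For \eqref{U} I would use $\seq{U^*TUh,h}=\seq{T(Uh),Uh}$ and $\norm{U^*TUh}=\norm{TUh}$, so that $\Delta_\rho$ and $\xi_\rho$ of $U^*TU$ at $h$ equal those of $T$ at $Uh$; since $U$ is a bijection of the unit sphere the two generating sets coincide. For \eqref{alpha} (with $\alpha\neq0$, the degenerate case being immediate) a direct substitution gives $\Delta_\rho^{\alpha T}(h)=\abs{\alpha}^2\Delta_\rho^{T}(h)$, so $\dom$ is unchanged and the factor $\abs{\alpha}$ cancels in \eqref{xidef}, leaving $\xi_\rho$ invariant; thus each generating point is multiplied by $\alpha$, and $z\mapsto\alpha z$ commutes with $\conv$ and closure. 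For \eqref{K}, for a unit $h\in\Kk$ one has $(T\rest\Kk)h=Th$, so $\seq{\cdot,\cdot}$, $\norm{\cdot}$, $\Delta_\rho$ and $\xi_\rho$ all agree; the generating set of $T\rest\Kk$ is therefore a subset of that of $T$, and closed convex hulls preserve inclusion.

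The heart of \eqref{sigmap} is the pointwise bound $\abs{\xi_\rho(h)\seq{Th,h}}\le\norm{Th}\le\norm T$, valid for every $h\in\dom(\xi_\rho)$. Writing $a=\abs{\seq{Th,h}}$ and $b=\norm{Th}$, so that $a\le b$ by Cauchy--Schwarz, the left-hand side equals $\tfrac12\bigl(ra+\sqrt{\Delta_\rho(h)}\bigr)$ (the case $a=0$ being trivial), and the inequality $\tfrac12(ra+\sqrt{r^2a^2-4(r-1)b^2})\le b$ follows by squaring the nonnegative quantity $2b-ra$ and reducing to $rb(a-b)\le0$. Since $\norm T\,\overline{\Disc}$ is closed and convex and contains every generating point, it contains $W^\rho(T)$.

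For \eqref{sigma} the matrix case is immediate: every $\lambda\in\sigma(T)$ is an eigenvalue, so picking a unit eigenvector $h$ gives $\seq{Th,h}=\lambda$ and, by Proposition \ref{basicxi}\eqref{(iv)}, $\xi_\rho(h)=1$; hence $\lambda=\xi_\rho(h)\seq{Th,h}$ is a generating point and lies in $W^\rho(T)$. For a general bounded operator I would first reduce to the approximate point spectrum: since $\partial\sigma(T)\subseteq\sigma_{\ap}(T)$ and every compact $K\subseteq\Comp$ satisfies $K\subseteq\conv(\partial K)$, and since $W^\rho(T)$ is closed and convex, it suffices to prove $\sigma_{\ap}(T)\subseteq W^\rho(T)$. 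For $\lambda\in\sigma_{\ap}(T)$ with $\lambda\neq0$, choosing unit $h_n$ with $(T-\lambda)h_n\to0$ yields $\seq{Th_n,h_n}\to\lambda$ and $\norm{Th_n}\to\abs{\lambda}$, so $\Delta_\rho(h_n)\to\abs{\lambda}^2(2-r)^2>0$; thus $h_n\in\dom(\xi_\rho)$ for large $n$, $\xi_\rho(h_n)\to1$, and $\xi_\rho(h_n)\seq{Th_n,h_n}\to\lambda$, placing $\lambda$ in the closure in \eqref{dns}.

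The main obstacle is the remaining case $\lambda=0\in\sigma_{\ap}(T)$ when $\rho>2$. Here the bound from \eqref{sigmap} still forces $\abs{\xi_\rho(h_n)\seq{Th_n,h_n}}\le\norm{Th_n}\to0$, so $0$ would lie in $W^\rho(T)$ \emph{provided} the approximate eigenvectors $h_n$ can be chosen in $\dom(\xi_\rho)$; but for $\rho>2$ the condition $\Delta_\rho(h_n)>0$ may fail precisely because such $h_n$ are far from being eigenvectors (cf.\ Proposition \ref{basicxi}\eqref{(v)}). For $\rho\in[1,2]$ there is nothing to settle, since $\dom(\xi_\rho)$ is the whole unit sphere by Proposition \ref{basicxi}\eqref{(ii)}. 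Resolving the range $\rho>2$ with $0\in\sigma_{\ap}(T)$ requires the separate analysis of quasinilpotent operators carried out in Section \ref{Operators}, which is why the operator case of \eqref{sigma} is completed only there.
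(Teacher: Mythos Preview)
Your proposal is correct and follows essentially the same route as the paper: items \eqref{0}--\eqref{K} are dismissed as elementary, \eqref{sigmap} is the bound $|\xi_\rho(h)\seq{Th,h}|\le\norm{Th}$ (the paper phrases this as $\xi_\rho\le\xi_1$ via Proposition~\ref{basicxi}\eqref{(i)},\eqref{(iii)}, which is your inequality in disguise), and for \eqref{sigma} you handle eigenvalues and nonzero approximate eigenvalues exactly as the paper does, correctly isolating the residual difficulty at $\lambda=0$ for $\rho>2$ and deferring it to Section~\ref{Operators}. Your remark that for $\rho\in[1,2]$ the point $0\in\sigma_{\ap}(T)$ is already covered by the bound $|\xi_\rho(h_n)\seq{Th_n,h_n}|\le\norm{Th_n}\to0$ is a valid shortcut the paper does not make explicit, since its case analysis in Section~\ref{Operators} is written uniformly in $\rho$.
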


Statements \eqref{0}, \eqref{U}, \eqref{alpha}, and \eqref{K} are elementary, \eqref{sigmap}  follows directly from statements \eqref{(i)} and \eqref{(iii)} of Proposition \ref{basicxi}. Also it follows directly from Proposition \ref{basicxi}\eqref{(iv)} that
the eigenvalues  are contained in $W^\rho(T)$ for any $ \rho\in[1,+\infty) $, hence \eqref{sigma} is showed if $T$ is a matrix. The proof in the operator case will be completed in Section \ref{Operators} and requires some additional preparation concerning quasinilpotent operators. Now let us study the simplest examples and instances.


\begin{remark}\label{trans}
  In addition to Theorem  \ref{basic}\eqref{alpha} note that, except the case $ \rho=2$,  $W^\rho(T+\alpha I)$  is in general not equal to $W^\rho(T)+\alpha$.
   This can be seen in various ways, we present here a general reason in case when $T$ is a matrix and $ \rho\in[1,2] $. Note that for a matrix $T$ the deformed numerical range $W^\rho(T)$  is contained in the closed right half-plane  if and only if $W^2(T)=W(T)$ has this property, hence
for a fixed $  \rho\in[1,2] $ the set $ W^\rho(T)$ is contained in the closed right half-plane if and only if $T+T^*\geq 0$.
    Furthermore,  $W^\rho(T)$ is compact and convex. Hence, if
 $W^\rho(T+ \alpha I)=W^\rho(T)+ \alpha$ for any matrix $T$ and any $\alpha\in\Comp$, then, by \cite[Theorem 1.4.2]{HorJ85}, $W^\rho(T)=W(T)$, which clearly is a contradiction with the definition of $W^\rho(T)$, see e.g.  Theorem \ref{monocont} below.
\end{remark}

\begin{remark}\label{closures}
\color{black}
Let $T$ be a nonzero complex square matrix. Then
\begin{equation}\label{W2W}
W^\rho(T)=\conv\left(\set{\xi_\rho(h)\seq{Th,h}:\norm h=1,\ \Delta_\rho(h)\geq0}\right)
\end{equation}
for all $\rho$  from $[1,2]$ and for all $\rho$ from $(2,+\infty)$ except a finite set.

First note that the set on the right hand side of \eqref{W2W} is closed, by compactness of the unit sphere. The inclusion '$\subseteq$' in \eqref{W2W} is in this light obvious. The converse inclusion for $\rho\in[1,2]$ follows  from Proposition \ref{basicxi}\eqref{(ii)}. The hard work  is  to show the inclusion `$\supseteq$' for $
\rho\in(2,+\infty)$.  For this aim consider the function
$$
G(f):=\frac{|\seq{Tf,f}|^2}{\norm{Tf}^2{\norm f}^2}, \quad \norm f\leq 1,\ Tf\neq 0.
$$
Assume that $\frac{4(r-1)}{r^2}$ is not a weak local maximal value of $G$ and let $r$ and $\rho$ be related as usual, $\rho=2/(2-r)$.
Take an arbitrary unit vector $h$ for which $\Delta_\rho(h)=0$. If $\seq{Th,h}=0$ then $Th=0$ and, by definition, $\xi_\rho(h)\seq{Th,h}\in W^\rho(T)$. Assume now that $\seq{Th,h}\neq 0$, note that one has $G(h)=\frac{4(r-1)}{r^2}$.
Due to our assumption, for every $\eps>0$ there exists a vector $g_\eps$ with $\norm{g_\eps-h}<\eps$ and $G(g_\eps)> \frac{4(r-1)}{r^2}$. Setting $\tilde g_\eps=g_\eps/\norm{g_\eps}$ we have $G(g_\eps)=G(\tilde g_\eps)$, i.e., $\Delta_\rho(\tilde g_\eps)>0$. Due to 
$\seq{Th,h}\neq 0$ we get that $\xi_\rho(\tilde g_\eps)\seq{T\tilde g_\eps,\tilde g_\eps}\to \xi_\rho(h)\seq{Th,h}$ with $\eps\to 0$.  
Hence, $\xi_\rho(h)\seq{Th,h}\in W^\rho(T)$ and the inclusion `$\supseteq$' in  \eqref{W2W} is shown for this $\rho$.

Seeing $G$ is a real rational function in $2n$ real parameters (real and imaginary coordinates of $h$) we observe that $G$ has only a finite number of weak local maximal values, which finishes the proof of the first statement. 


\end{remark}

\section{Examples}\label{simcase}

In this section we will deal with $2\times 2$ matrices.

\begin{example} \label{pictures} Figure \ref{pictures1}  shows the  set $\set{ \xi_\rho(h) \seq{Th,h}: h\in\dom(\xi_\rho)   }$ for $\rho\in[1,2]$, recall that the closure of the convex hull of this set is, by definition, the deformed numerical range.  We will discuss the connectivity of $\set{ \xi_\rho(h) \seq{Th,h}: h\in\dom(\xi_\rho)   }$ for $\rho\in[1,2]$ in Subsection \ref{ss:shell}.

Figure \ref{pictures2}  shows the  set $\set{ \xi_\rho(h) \seq{Th,h}: h\in\dom(\xi_\rho)   }$ for $\rho\in[2,+\infty)$.
Note that in many instances the plotted set itself is not convex and for $\rho\in[2,+\infty)$ can happen to be not connected.

Later on, on Section \ref{smono} it will become clear that  the location with respect to the origin plays here the essential role. In particular, note that for a normal matrix $T=\matp{\ii & 0\\ 0 & 1}$ the deformed numerical range $W^\rho(T)$ is not the convex hull of its eigenvalues.
\end{example}

\begin{figure}

\caption{The numerical plot of $\set{ \xi_\rho(h) \seq{Th,h}: h\in\dom(\xi_\rho)   }$ for $ \rho=1$ (blue circles)  $\rho=\frac43$ (red crosses) $ \rho=2$ (numerical range, black dots).  }\label{pictures1}
\begin{center}
 \includegraphics[width=200pt]{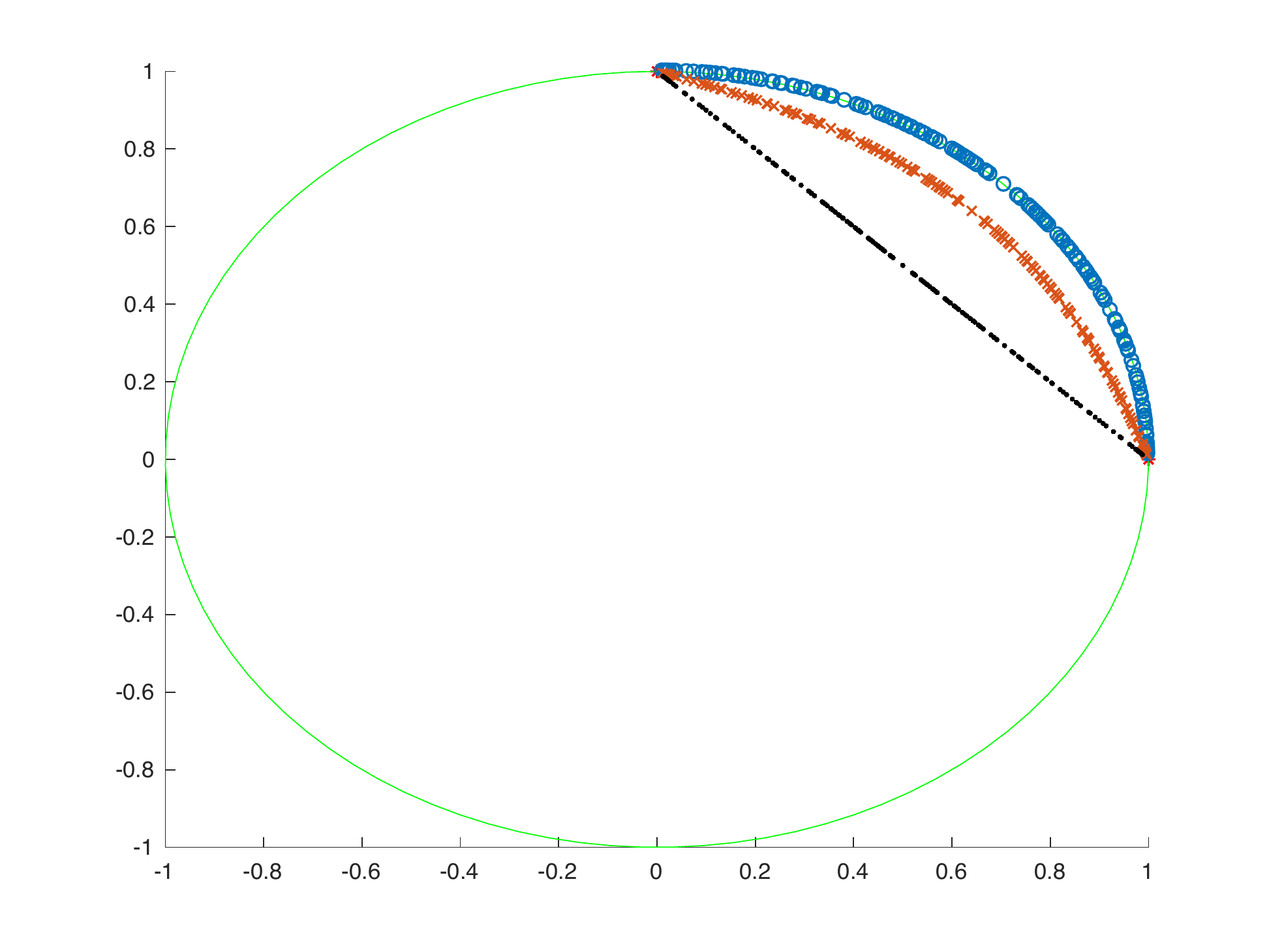}   \\ $T=\matp{   \ii & 0 \\ 0 & 1 }$
\end{center}

\begin{center}
 \includegraphics[width=200pt]{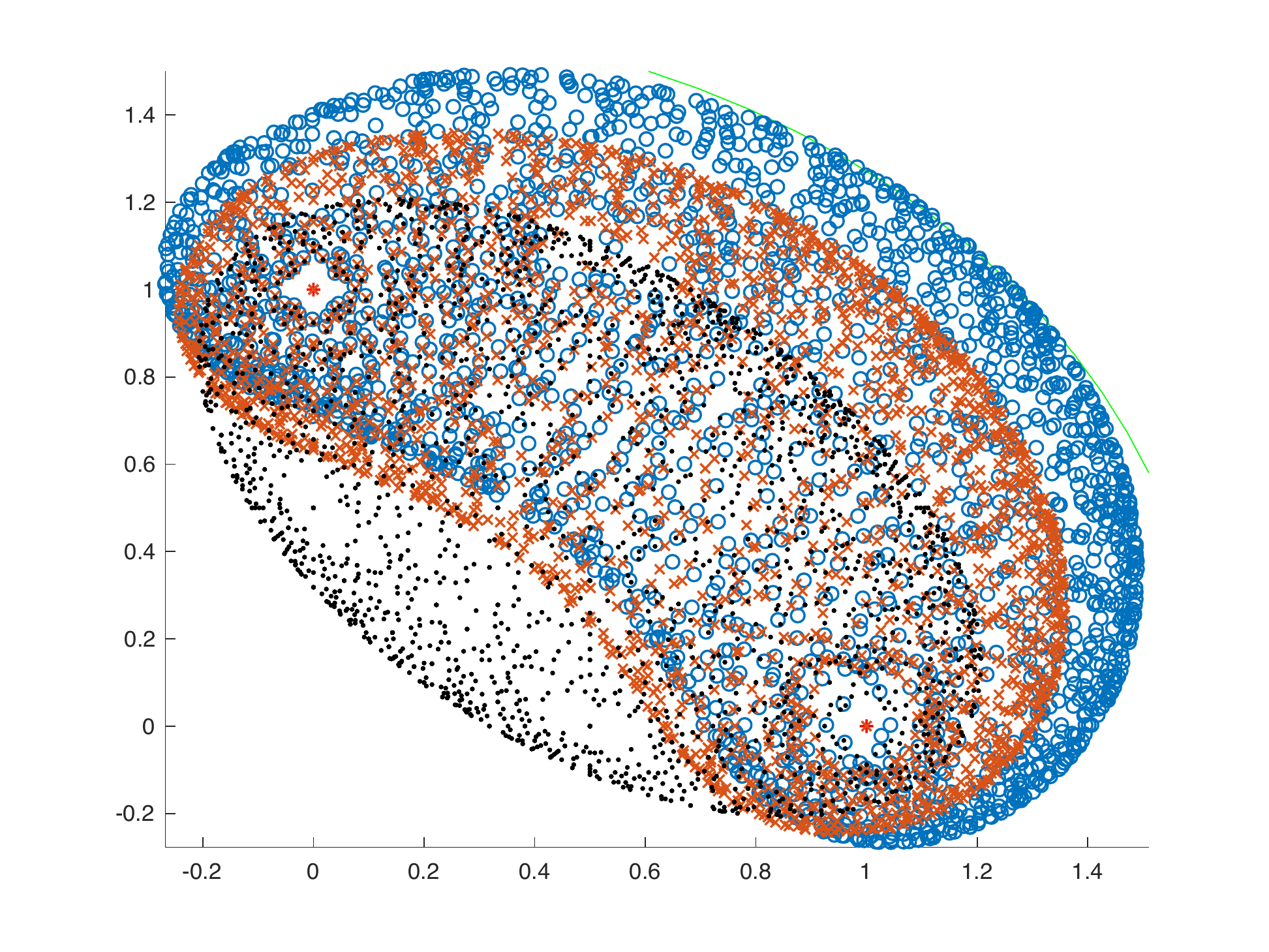} \\ $T=\matp{   \ii & 0 \\ 1 & 1 }$
\end{center}

\begin{center}
 \includegraphics[width=200pt]{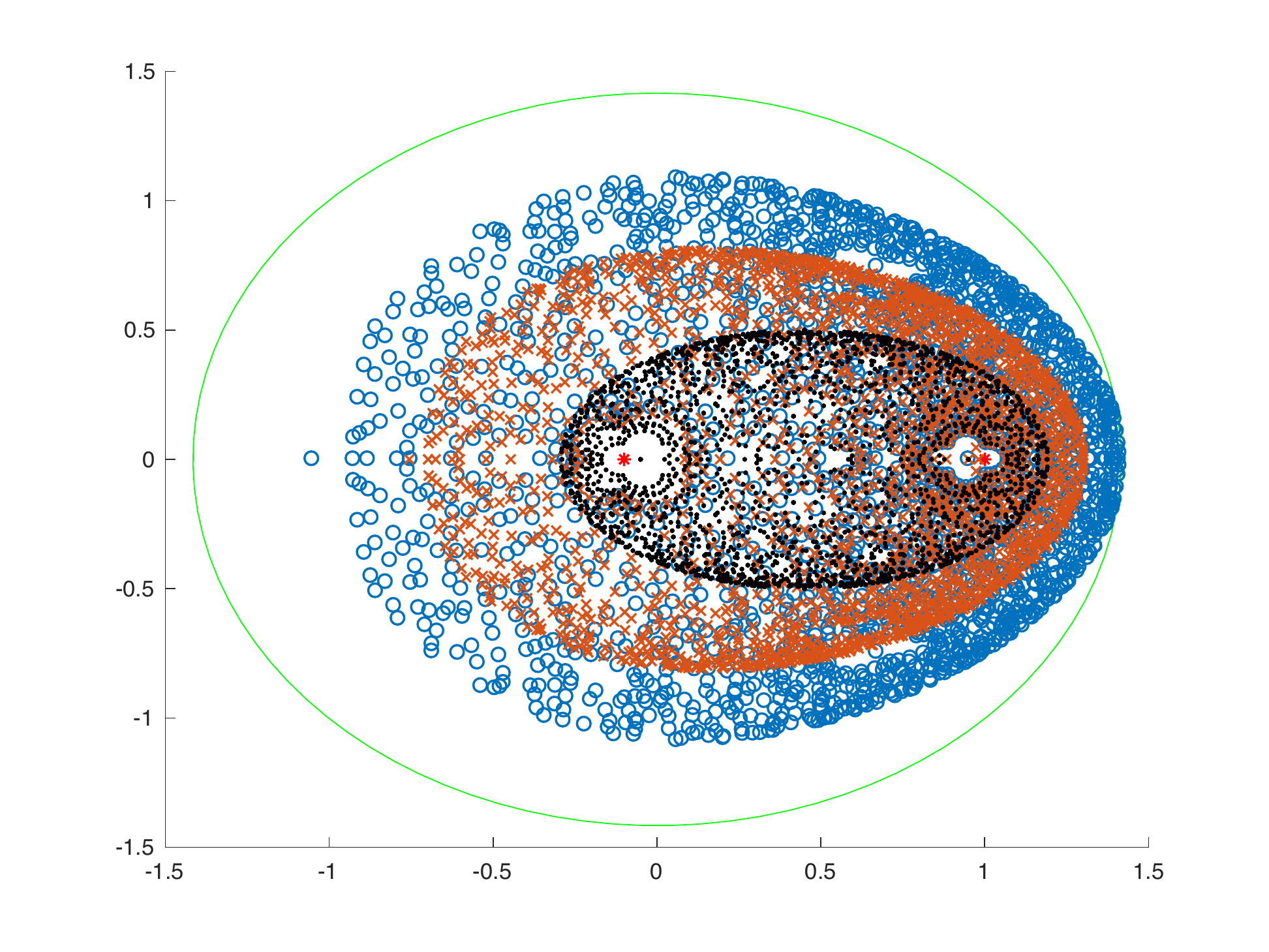} \\ $T=\matp{   -0.1 & 1 \\ 0 & 1 }$
\end{center}

\end{figure}


\begin{figure}
\caption{The numerical plot of $\set{ \xi_\rho(h) \seq{Th,h}: h\in\dom(\xi_\rho)   }$ for $ \rho=2$ (numerical range, blue)  $\rho=4$ (orange) $\rho=20$ (black).  }\label{pictures2}

\begin{center}
 \includegraphics[width=200pt]{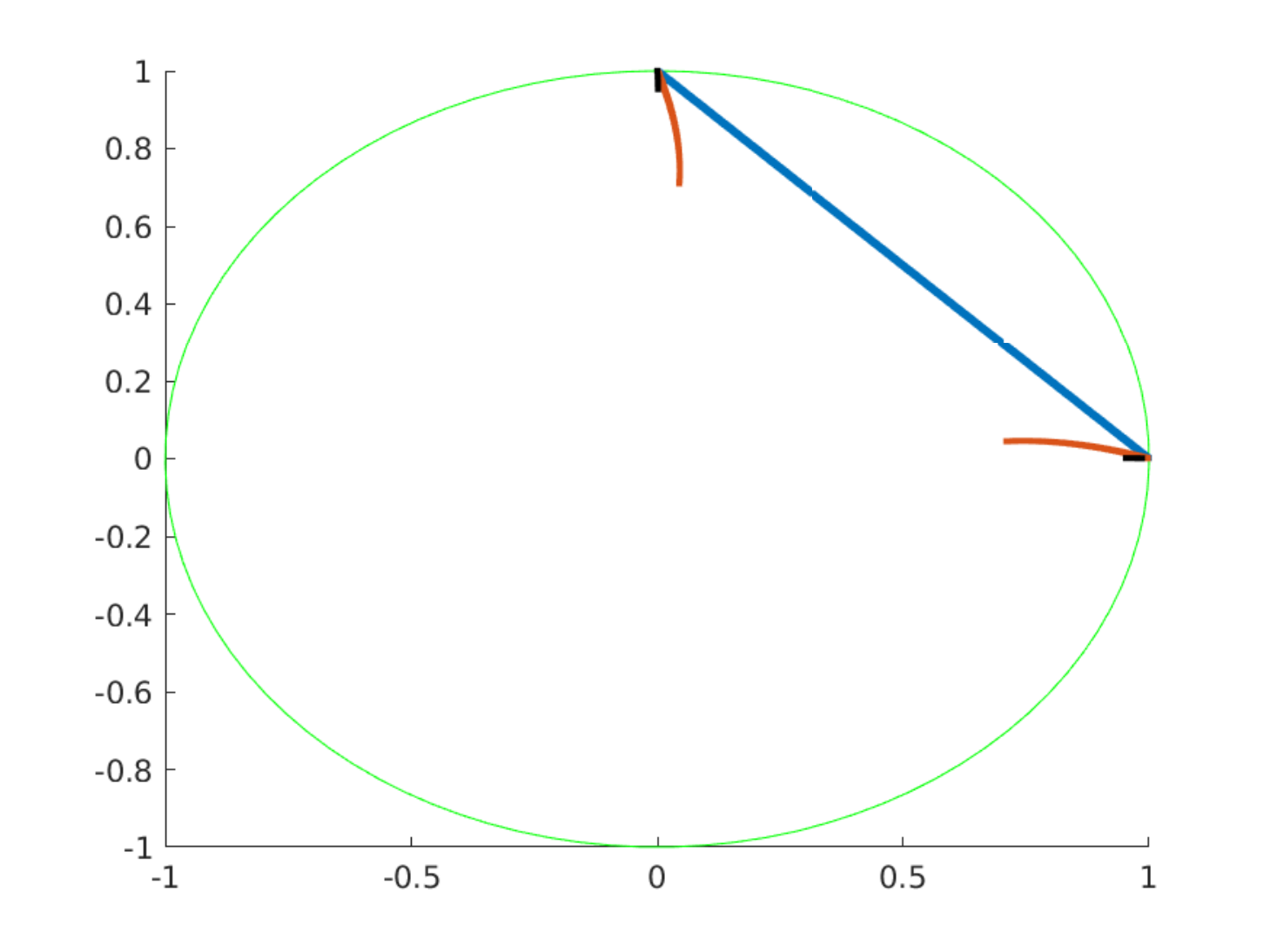}   \\ $T=\matp{   \ii & 0 \\ 0 & 1 }$
\end{center}

\begin{center}
 \includegraphics[width=200pt]{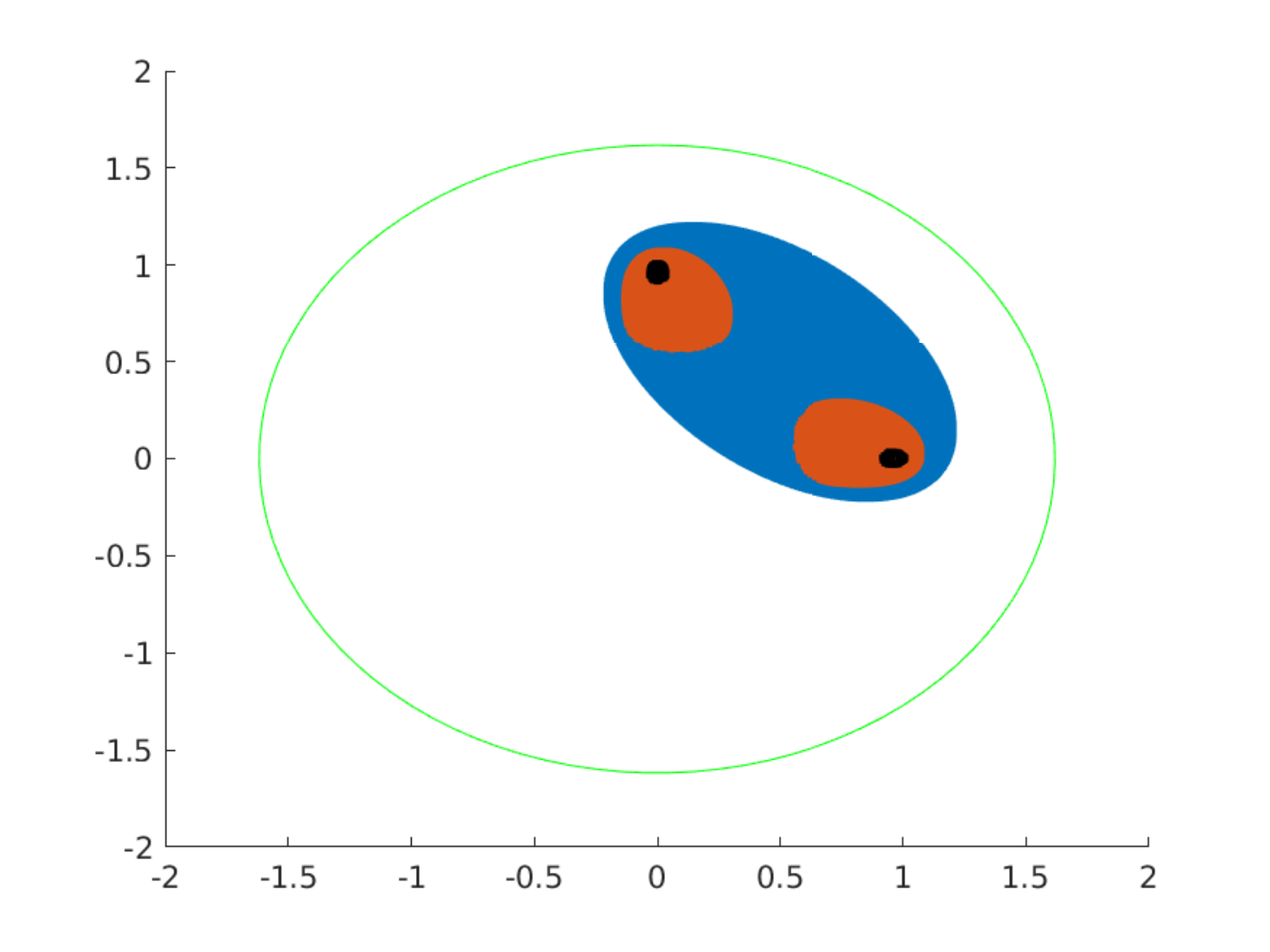} \\ $T=\matp{   \ii & 0 \\ 1 & 1 }$
\end{center}

\begin{center}
 \includegraphics[width=200pt]{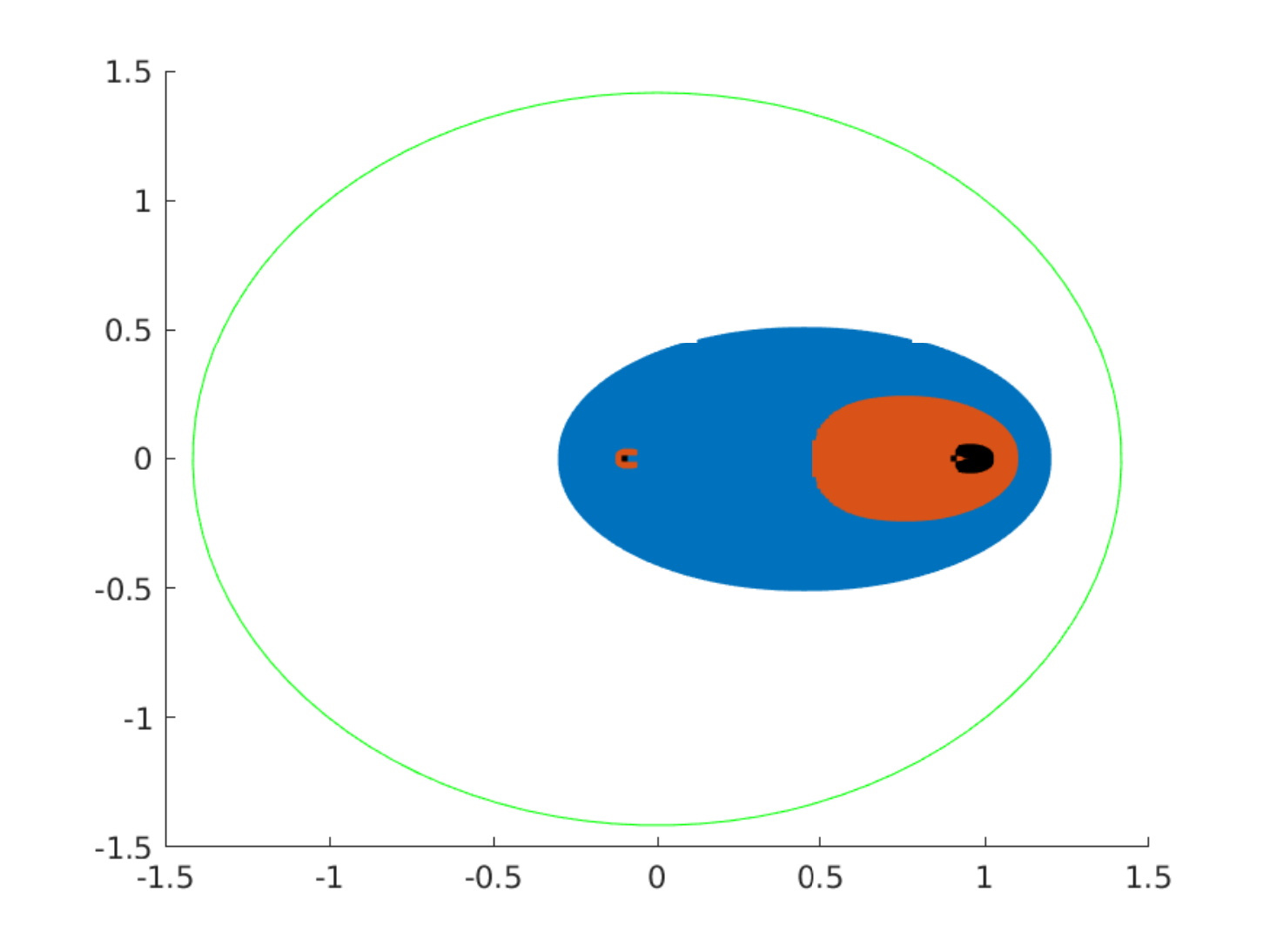} \\ $T=\matp{   -0.1 & 1 \\ 0 & 1 }$
\end{center}

\end{figure}

The next example, due to its importance and length of the argument, is presented as a proposition.

\begin{proposition}\label{2x22}
If $T=\matp{0 & 2\\ 0 &0}\in\Comp^{2,2}$, then
$$
W^\rho(T)=\frac2\rho\ \overline{\Disc}.
$$
\end{proposition}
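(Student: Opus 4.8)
The plan is to reduce the claim to a single real maximization by exploiting rotational symmetry about the origin, and then to locate the maximizer explicitly. First I would record, for a unit vector $h=(h_1,h_2)$ with $a:=|h_1|$, $b:=|h_2|$ and $a^2+b^2=1$, the basic quantities $\seq{Th,h}=2h_2\bar h_1$, $\norm{Th}=2b$, and $|\seq{Th,h}|=2ab$. The crucial structural observation is that replacing $h_1\mapsto e^{\ii\alpha}h_1$, $h_2\mapsto e^{\ii\beta}h_2$ leaves $\Delta_\rho(h)$ and $\xi_\rho(h)$ unchanged (they depend only on $a$ and $b$) while multiplying $\seq{Th,h}$ by the arbitrary unimodular factor $e^{\ii(\beta-\alpha)}$. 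Hence the set $S:=\set{\xi_\rho(h)\seq{Th,h}:h\in\dom(\xi_\rho)}$ is invariant under all rotations about the origin and contains $0$ (take $h=(1,0)$). Consequently $W^\rho(T)=\conv\overline S$ is a closed disc centred at the origin, and the whole problem collapses to computing its radius $\nu_\rho(T)=\sup_{z\in S}|z|$ and checking that it equals $2/\rho$.

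Using \eqref{xidef} with $r=2(1-\tfrac1\rho)$, for $\seq{Th,h}\neq 0$ one has $\xi_\rho(h)|\seq{Th,h}|=\tfrac12\big(r|\seq{Th,h}|+\sqrt{\Delta_\rho(h)}\big)=\phi(a)$, where
\[
\phi(a)=\sqrt{1-a^2}\,\Big(ra+\sqrt{r^2a^2-4(r-1)}\Big).
\]
So I would be left to maximise $\phi$ over the admissible range $a\in[0,1]$ (namely where $\Delta_\rho(h)>0$, together with the degenerate points $Th=0$ and $\seq{Th,h}=0$, which contribute only the value $0$ or are recovered in the closure). The goal is to show $\max\phi=2-r=2/\rho$.

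The key step is the interior critical-point analysis. Setting $\phi'(a)=0$ and writing $g:=ra+\sqrt{r^2a^2-4(r-1)}$, a short manipulation reduces the stationarity condition to $\sqrt{r^2a^2-4(r-1)}=r(1-a^2)/a$; substituting this back into $s^2=r^2a^2-4(r-1)$ yields the single equation $a^2=r^2/\big(2((r-1)^2+1)\big)$, and at this point $g$ collapses to $r/a$, so that $\phi=rb/a$ and
\[
\phi^2=r^2\Big(\tfrac1{a^2}-1\Big)=r^2-4r+4=(2-r)^2.
\]
Thus the unique interior critical value is exactly $2-r=2/\rho$. To conclude it is the global maximum I would check the boundary contributions: $\phi(1)=0$; the limit $a\to0^+$ gives $2\sqrt{1-r}\le 2-r$ (equivalently $r^2\ge0$); and for $\rho>2$ the endpoint $\Delta_\rho(h)=0$ gives $2(2-r)\sqrt{r-1}/r\le 2-r$ (equivalently $(r-2)^2\ge0$). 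One also verifies, via the identity reducing to $(2-r)^4\ge0$, that the critical $a$ lies strictly inside the admissible interval. Hence $\nu_\rho(T)=2/\rho$ and $W^\rho(T)=\tfrac2\rho\,\overline\Disc$.

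The routine parts are the symmetry bookkeeping and the endpoint estimates; the only genuine obstacle is the interior maximisation, where the apparent messiness of the radical dissolves once one notices the simplification $g=r/a$ at the stationary point. As a conceptual sanity check that also explains the answer, scaling by $\rho/2$ recasts the statement as $\matp{0 & \rho\\0&0}$ lying on the boundary of the class $C_\rho$, matching the N\'agy--Foia\c s fact to be recovered in Section \ref{NF}.
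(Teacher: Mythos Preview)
Your proof is correct and follows essentially the same approach as the paper: both exploit the rotational symmetry to reduce to a disc and then compute the radius by the one-variable optimisation, arriving at the same critical point $a^2=r^2/\big(2((r-1)^2+1)\big)$. You simply supply the calculus details (the critical-point identity $g=r/a$ and the boundary checks) where the paper writes ``it is a matter of elementary calculation'' and then exhibits the maximiser \eqref{sincos} directly.
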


\begin{proof}
For $ \rho=2$ the result is known, hence, assume that $\rho\neq 2$. As in Section \ref{def} we take $r=2(1-\rho^{-1})$, $r\neq 1$. Let $h=\matp{x & y}^\top\in\Comp^2$, so that
 \begin{equation*}
     \seq{Th,h}=2y\bar{x}\qquad\textup{and} \qquad \norm{Th}=|2x|.
 \end{equation*}
 The  deformed numerical range of $T$ has the following form
 \begin{equation*}
{W}^{\rho}(T)=\set{\frac{\bar{x}y}{|y|}\big( r |y|+\sqrt{ r ^2|y|^2-4( r -1)}\big) : \norm{\matp{ x\\ y}}=1, \   r ^2|y|^2-4( r -1)\geq 0    }.
 \end{equation*}
Observe that ${W}^{\rho}(T)$ is circular and since it is also convex, it is a disc centred at the origin. We prove now that its radius $\nu_\rho(T)$ equals $2-r=2/\rho$.
 Applying \eqref{dnr} we
obtain
 \begin{equation*}\nu_\rho(T)=\sup_{\substack{\norm{h}=1,\\ \Delta_\rho(h)> 0}}\xi_\rho(h) |\seq{Th,h}|=\sup_{\substack{|x|^2+|y|^2=1,\\  r ^2|y|^2-4( r -1) > 0}}|x|( r |y|+\sqrt{ r ^2|y|^2-4( r -1)}).
 \end{equation*}
 It is a matter of elementary calculation that for $ r \neq 1$ we have $\nu_\rho(T)\leq 2- r $.

Setting
 \begin{equation}\label{sincos}
     y
     =\sqrt{\frac{ r ^2}{2( r ^2- 2r +2)}},\quad
     x
     =\sqrt{\frac{( r -2)^2}{2( r ^2- 2r +2)}},\quad h=\matp{x\\y}
 \end{equation}
 we see that in fact $\nu_\rho(T)= 2- r $.

\end{proof}

\section{The deformed numerical range of an  operator}\label{Operators}

 First, let us consider the case of a quasinilpotent operator, interesting for itself and needed later on in the proof of the inclusion $\sigma(T)\subseteq W^\rho(T)$.

\begin{proposition}\label{Pquasi} Let $T$ be a bounded and quasinilpotent but not nilpotent operator  on a Hilbert space $\h$. Then there exist a sequence $\{h_k\}_{k=0}^\infty\subset\h$ such that $\|h_k\|=1$ for $k\in\Nat$ and
\begin{equation}\label{hkl}
    \Big|\seq{\frac{Th_k}{\|Th_k\|},h_k}\Big|\to 1,\quad k\to\infty.
\end{equation}
In consequence, for any $ \rho\in[1,+\infty) $, $\dom({\xi_\rho})$ is nonempty, $0$ is an accumulation point of $W^\rho(T)$, and $\nu_\rho(T)>0$.
\end{proposition}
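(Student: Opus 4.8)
The plan is to build the vectors $h_k$ as \emph{relative} approximate eigenvectors of $T$ for small nonzero eigenvalues: if $h$ is a unit vector with $\norm{(T-\mu)h}$ small compared with $|\mu|$, then $Th$ is nearly parallel to $h$, so $|\seq{Th/\norm{Th},h}|$ is close to $1$. The natural way to produce such $h$ is through the resolvent. Since $T$ is quasinilpotent we have $\sigma(T)=\{0\}$, so $(\mu I-T)^{-1}$ exists for every $\mu\neq0$ and is given by the norm-convergent Laurent series $(\mu I-T)^{-1}=\sum_{k\geq0}\mu^{-k-1}T^{k}$ (convergent because $\nu_\infty(T)=0$). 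For a unit seed $g$ I would set $v=(\mu I-T)^{-1}g$ and $h=v/\norm v$; from $(\mu I-T)v=g$ one gets $Th=\mu h-\delta g$ with $\delta:=1/\norm v$, which is a relative approximate eigenvector precisely when $\delta=o(|\mu|)$, i.e. when $|\mu|\,\norm v$ is large.

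The hard part, and the only place the non-nilpotency hypothesis is used, is to guarantee that the resolvent blows up \emph{faster} than $1/|\mu|$ along some sequence $\mu_k\to0$; the trivial lower bound $\norm{(\mu I-T)^{-1}}\geq 1/\dist(\mu,\sigma(T))=1/|\mu|$ is of the same order as $\delta$ and is not enough. I would argue that a polynomial bound $\norm{(\mu I-T)^{-1}}\leq C|\mu|^{-n}$ valid for small $\mu$ is impossible: Cauchy estimates for the Laurent coefficients of $(\mu I-T)^{-1}$ on circles $|\mu|=r$ give $\norm{T^{k}}\leq C\,r^{\,k+1-n}$, and letting $r\to0$ forces $T^{k}=0$ for all $k\geq n$, i.e. $T$ nilpotent. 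Thus $0$ is an essential singularity of the resolvent and $\limsup_{\mu\to0}|\mu|\,\norm{(\mu I-T)^{-1}}=+\infty$. I would then pick $\mu_k\to0$ with $|\mu_k|\,\norm{(\mu_k I-T)^{-1}}\to\infty$ and unit seeds $g_k$ with $\norm{(\mu_k I-T)^{-1}g_k}\geq\tfrac12\norm{(\mu_k I-T)^{-1}}$.

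With $v_k=(\mu_k I-T)^{-1}g_k$, $h_k=v_k/\norm{v_k}$ and $\delta_k=1/\norm{v_k}$, the choice of $g_k$ yields $\delta_k/|\mu_k|\leq 2\big(|\mu_k|\,\norm{(\mu_k I-T)^{-1}}\big)^{-1}\to0$, so $\delta_k=o(|\mu_k|)$. The identity $Th_k=\mu_k h_k-\delta_k g_k$ then gives $\norm{Th_k}=|\mu_k|(1+o(1))$ and $\seq{Th_k,h_k}=\mu_k+o(|\mu_k|)$, whence $|\seq{Th_k/\norm{Th_k},h_k}|\to1$, which is \eqref{hkl}; note also $\norm{Th_k}\to0$ and $Th_k\neq0$ for large $k$.

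For the consequences I would feed this back into the definitions. Writing $t_k=|\seq{Th_k,h_k}|^2/\norm{Th_k}^2\to1$, the expression \eqref{DeltaRozpisana} shows $\Delta_\rho(h_k)=\norm{Th_k}^2\big((2-r)^2t_k-4(r-1)(1-t_k)\big)\to (2-r)^2\norm{Th_k}^2>0$ relatively, so $\Delta_\rho(h_k)>0$ for all large $k$ and hence $h_k\in\dom(\xi_\rho)$ by \eqref{dom_xi}; in particular $\dom(\xi_\rho)\neq\emptyset$ for every $\rho\in[1,+\infty)$. Each point $z_k:=\xi_\rho(h_k)\seq{Th_k,h_k}$ lies in $W^\rho(T)$ by \eqref{dns}, and since $\seq{Th_k,h_k}\neq0$ and $\xi_\rho(h_k)>0$ (as $\Delta_\rho(h_k)>0$) we have $z_k\neq0$, while $|z_k|=\xi_\rho(h_k)|\seq{Th_k,h_k}|\to\norm{Th_k}\to0$. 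Thus $\{z_k\}$ is a sequence of nonzero points of $W^\rho(T)$ tending to $0$, so $0$ is an accumulation point of $W^\rho(T)$; and $\nu_\rho(T)\geq|z_k|>0$ for any large fixed $k$ gives $\nu_\rho(T)>0$.
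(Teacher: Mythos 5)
Your proof is correct, and it reaches \eqref{hkl} by a genuinely different mechanism than the paper, although the two constructions are secretly reparametrizations of one another. The paper fixes a single seed $h$, forms the entire $\h$-valued function $f(z)=\sum_{n\ge0}z^nT^nh=(I-zT)^{-1}h$, and invokes the vector-valued Liouville theorem to produce $z_k$ with $\norm{f(z_k)}\to\infty$; the identity $Tf(z)=z^{-1}(f(z)-h)$ then yields \eqref{hkl} for $h_k=f(z_k)/\norm{f(z_k)}$. Since $f(1/\mu)=\mu(\mu I-T)^{-1}h$, your vectors $v_k=(\mu_k I-T)^{-1}g_k$ are the same objects, but you obtain the required blow-up of $|\mu|\,\norm{(\mu I-T)^{-1}}$ differently: via Cauchy estimates on the Laurent coefficients $T^k$ of the operator-valued resolvent, showing that any polynomial bound $\norm{(\mu I-T)^{-1}}\le C|\mu|^{-n}$ near $0$ forces nilpotency. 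This is sound (with $n=1$ it already gives the contradiction needed for $\limsup_{\mu\to0}|\mu|\,\norm{(\mu I-T)^{-1}}=+\infty$), and it has the merit of making the role of the non-nilpotency hypothesis completely transparent, at the cost of working with the operator norm and having to choose near-norming seeds $g_k$ depending on $k$, where the paper gets by with one fixed seed. Your derivation of the consequences is also a bit more direct than the paper's: you read off $\Delta_\rho(h_k)>0$ from \eqref{DeltaRozpisana}, note $z_k\ne0$ since $\xi_\rho(h_k)>0$ and $\seq{Th_k,h_k}\ne0$, and bound $|z_k|$ by a multiple of $\norm{Th_k}\to0$, whereas the paper argues through boundedness of $\xi_\rho(h_k)$ and the auxiliary limit $z_k\seq{Th_k,h_k}\to1$. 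The only blemish is notational: ``$|z_k|\to\norm{Th_k}\to0$'' should be phrased as $|z_k|-\norm{Th_k}\to0$ (or $|z_k|\le\norm{Th_k}$) together with $\norm{Th_k}\to0$; the underlying estimate is fine.
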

\begin{proof} Fix $h\in\h\setminus\set{0}$ and
define a function $f$  by
\begin{equation*}
    f(z):=\sum_{n=0}^\infty z^nT^nh, \quad z\in\Comp.
\end{equation*}
Since $T$ is quasinilpotent, we infer from the root test \cite[ page 199]{rudin1987real} that $f$ is an entire $\h$-valued function.  Observe that
\begin{equation*}
    Tf(z)=\sum_{n=0}^\infty z^nT^{n+1}h=\frac{1}{z}(f(z)-h).
\end{equation*}
Note that since $T$ is not nilpotent, $f$ is not constant and $f(z)\neq 0$ implies $Tf(z)\neq 0$. Hence,

\begin{equation}\label{qu2}
    \frac{|\seq{Tf(z),f(z)}|}{\|Tf(z)\|\|f(z)\|}=\frac{|\|f(z)\|^2-\seq{h,f(z)}|}{\|f(z)- h\|\ \|f(z)\|}.
\end{equation}
By  \cite[Theorem 3.32]{rudin1991functional} there exist a sequence $\{z_k\}_{k=0}^\infty$ such that $\lim_{k\to\infty}\|f(z_k)\|=\infty$, which gives $\|f(z_k)\|/\|f(z_k)-h\|\to1$ and
\begin{equation}\label{Thn}
\left |\seq{\frac{Tf(z_k)}{\|Tf(z_k)\|   },\frac{f(z_k)}{\|f(z_k)\|}  }\right |\to 1, \ (k\to\infty).
\end{equation}
Setting $h_k=f(z_k)/\norm{f(z_k)}$ finishes the proof of \eqref{hkl}.

Note that by \eqref{DeltaRozpisana} for a fixed $ \rho\in[1,+\infty) $ there exists $k_0$ such that $\Delta_\rho(h_k)>0$ for $k>k_0$.
For those $k$ we define $w_k:=\xi_\rho(h_k)\seq{Th_k,h_k}\in W^\rho(T)$. Note that $w_k\neq 0$ as  $\xi_\rho(h_k)>1-\rho^{-1} $ and
\begin{equation}\label{zT}
z_k\seq{Th_k,h_k}=\frac{\norm{f(z_k)}^2 - \seq{h_k, f(z_k)}}{\| f(z_k) \|^2} \to1, \quad (k\to\infty).
\end{equation}
 To finish the proof we need to  prove that  $w_k\to 0$ $(k\to\infty)$,
in the light of \eqref{zT} and since $|z_k|\to \infty$ it is enough to show  that $\xi_\rho(h_k)$ is bounded.  Observe  that
$$
\frac{\sqrt{\Delta_\rho(h_k)}}{|\seq{Th_k,h_k}|}=\frac{\sqrt{\Delta_\rho(h_k)}}{\norm{Th_k}}\cdot
\frac{ \norm{f(z_k)-h}  } {\norm{f(z_k)} |z_k| |\seq{Th_k,h_k}|}.
$$
Note that on the right hand side the first factor  converges, by \eqref{Deltadef} and \eqref{hkl}, to $\sqrt{r^2-4(r-1)}=r-2$, where $r=2-2\rho^{-1}$, and the second factor converges to $1$ by \eqref{zT}, which finishes the proof.
\end{proof}

We are able now to complete the proof of the inclusion $\sigma(T)\subseteq W^\rho(T)$, $\rho\in [1,+\infty)$, showed so far in the finite dimensional case.

\begin{proof}[Proof of Theorem \ref{basic} \eqref{sigma}]

 First we show that $\sigma_{\ap}(T)\setminus\{0\}\subseteq W^\rho(T) $. Take $\lambda\in\sigma_{\ap}(T)\setminus\{0\}$. Then there exists a sequence $\{h_n\}$ of unit vectors in $\h$   such that $\|Th_n-\lambda h_n\|\to 0$. This implies that $\seq{Th_n,h_n}\to \lambda$,  $\norm {Th_n}\to |\lambda|$
and consequently $h_n\in \dom(\xi_\rho)$, for  $n$ large  enough, and $\xi_\rho(h_n)\to 1$. Hence, $\lambda \in W^\rho(T)$.

The proof now splits into several cases.

Case 1:  $0\notin\sigma(T)$.   Recall that  $\partial(\sigma(T))\subseteq\sigma_{\ap}(T)$, see e.g. \cite[Theorem 2.5]{kubrusly2012spectral}. Consequently,
\begin{equation*}\label{s1}
\sigma(T)\subseteq \conv (\partial \sigma(T))  \subseteq \conv( \sigma_{\ap}(T))=\conv( \sigma_{\ap}(T)\setminus\set0 ) \subseteq W^\rho(T).
\end{equation*}

Case 2:  $0\in\Int(\sigma(T))$. Then
$$
\sigma(T)\subseteq \conv (\partial \sigma(T)) = \conv (\partial \sigma(T)\setminus\set0)  \subseteq \conv( \sigma_{\ap}(T)\setminus\set 0)\subseteq W^\rho(T).
$$

Case 3:  $0$ is an isolated point of $\sigma(T)$. Then, by taking the Riesz projection and applying \eqref{K} and  Case 1 we see that $\sigma(T)\setminus\set 0$ is  contained in $W^\rho(T)$. Hence, the proof of Case 3 reduces to considering  $\sigma(T)=\set0$. If  $0$ is an eigenvalue then we use Proposition \ref{basicxi}\eqref{(iv)}. If $\sigma_{\p}(T)=\emptyset$ then $T$ is a quasinilpotent, but not nilpotent operator. By  Proposition \ref{Pquasi} we have, in particular, that $0\in W^\rho(T)$.

Case 4:  $0\in\partial(\sigma(T))$ and is a non-isolated point of $\sigma(T)$. Then, by compactness of $\sigma(T)$, $0$ is a non-isolated point of $\partial\sigma(T)$. In consequence,
\begin{align*}
\sigma(T)\subseteq \conv (\partial \sigma(T)) &=    \conv (\overline{\partial \sigma(T)\setminus\set0})  \\&=
\overline{ \conv (\partial \sigma(T)\setminus\set0)}
  \subseteq \overline{\conv( \sigma_{\ap}(T)\setminus\set0)}\subseteq W^\rho(T).
\end{align*}
 \end{proof}

\section{Connection with the classes $C_\rho$ of power bounded operators}\label{NF}

First  we show the connection of our deformed numerical range and radius with the dilation theory initiated by  Sz.-N\'agy and Foia\c s in \cite{nagyfoias1966} and Durszt in \cite{durszt}.  It is known that   the operator $T\in\cB(\h)$ satisfies the following condition
\begin{equation}\tag{I${}_\rho$}\label{I}
\norm h^2 -2(1-\rho^{-1}) \RE \seq{zTh,h}+(1-2\rho^{-1})\norm{zTh}^2\geq 0, \quad h\in\h,\quad |z|\leq 1,
\end{equation}
if and only if there exists a unitary operator $U$ in some Hilbert space $\Kk$ containing $\h$ as a subspace, such that for any polynomial $p$ with $p(0)=0$ holds
$p(T)=\rho\cdot Pp(U)P^*$, where $P$ is the orthogonal projection from $\Kk$ onto $\h$.
Note that it follows that the spectrum of an operator satisfying \eqref{I}  is automatically contained in the closed unit disc. We refer to \cite{NagyFoiasbook} for more details.

While for dilation theory the number $\rho$ is more natural, for technical purposes in the present paper it was much more convenient to use the parameter $r=2(1-\frac1\rho)$.  We adapt the condition \eqref{I} therefore.

\begin{lemma}
For $\rho\in[1,+\infty)$ and  $r=2(1-\frac1\rho)\in[1,2)$ condition \eqref{I} is equivalent to  the following.
\begin{equation}\tag{I${}_r$}\label{Iq}
\phi_h(t):=1- r |\seq{Th,h}| t+(r-1)\norm{ Th}^2 t^2 \geq 0,\quad t\in[0,1],\  h\in\dom(\xi_\rho).
\end{equation}
\end{lemma}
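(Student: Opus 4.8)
The plan is to fix the vector $h$ and read \eqref{I} as a scalar inequality in the complex variable $z$ with $|z|\le 1$, optimising first over the phase of $z$ and then over its modulus. First I would note that the left-hand side of \eqref{I} is homogeneous of degree two in $h$, so it suffices to check it for unit vectors (the case $h=0$ being trivial $0\ge 0$), and that under the substitution $r=2(1-\rho^{-1})$ one has $2(1-\rho^{-1})=r$ and $1-2\rho^{-1}=r-1$, so that for $\norm h=1$ the left-hand side of \eqref{I} becomes
\[
1-r\,\RE\seq{zTh,h}+(r-1)|z|^2\norm{Th}^2 .
\]

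Next, writing $z=t\,e^{\ii\theta}$ with $t=|z|\in[0,1]$, only the middle term depends on $\theta$. Since $r\in[1,2)$ gives $r>0$, this term is minimised over $\theta$ by aligning the phase so that $e^{\ii\theta}\seq{Th,h}=|\seq{Th,h}|$, which yields $\RE\seq{zTh,h}=t\,|\seq{Th,h}|$. Hence for a fixed unit $h$ the minimum of the left-hand side of \eqref{I} over the circle $\{|z|=t\}$ equals exactly $\phi_h(t)$, and therefore \eqref{I} holds for this $h$ and all $|z|\le 1$ if and only if $\phi_h(t)\ge 0$ for all $t\in[0,1]$. Taking the conjunction over $h$, this already shows that \eqref{I} is equivalent to the requirement that $\phi_h(t)\ge 0$ for \emph{every} unit vector $h$ and every $t\in[0,1]$.

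The one remaining point, and the only place where some care is needed, is to replace ``every unit $h$'' by ``$h\in\dom(\xi_\rho)$'' as in \eqref{Iq}. Here I would use the observation that, by \eqref{Deltadef}, the quantity $\Delta_\rho(h)$ is precisely the discriminant in $t$ of the quadratic $\phi_h$, whose leading coefficient $r-1$ is nonnegative for $r\in[1,2)$. For $\rho\in[1,2]$ there is nothing to prove, since $\dom(\xi_\rho)$ is the whole unit sphere by Proposition \ref{basicxi}\eqref{(ii)}. For $\rho>2$ we have $r>1$, so $\phi_h$ is a genuine upward parabola; a unit vector $h\notin\dom(\xi_\rho)$ then satisfies $Th\neq 0$ and $\Delta_\rho(h)\le 0$ by \eqref{dom_xi}, i.e.\ $\phi_h$ has nonpositive discriminant, whence $\phi_h(t)\ge 0$ for all real $t$ and in particular on $[0,1]$. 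Thus the inequality of \eqref{Iq} is automatic for $h$ outside $\dom(\xi_\rho)$, so restricting $h$ to $\dom(\xi_\rho)$ loses nothing, and the equivalence of \eqref{I} and \eqref{Iq} follows. I expect the phase-minimisation step to be routine; the main (mild) subtlety is the domain bookkeeping, which is handled cleanly by recognising $\Delta_\rho(h)$ as the discriminant of $\phi_h$.
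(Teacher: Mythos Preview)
Your proof is correct and follows essentially the same route as the paper's: both reduce \eqref{I} to $\phi_h(t)\ge 0$ on $[0,1]$ by optimising over the phase of $z$, then dispose of the domain restriction by noting that for $r>1$ the vectors $h\notin\dom(\xi_\rho)$ give an upward parabola with nonpositive discriminant (so the inequality is automatic), while for $r\le 1$ the domain is the whole unit sphere. Your presentation is marginally more streamlined, establishing the biconditional in one step via the phase minimisation rather than arguing the two implications separately and splitting into the three cases $r\in[0,1)$, $r=1$, $r\in(1,2)$ as the paper does, but the substance is the same.
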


\begin{proof}
First note that for $h\in\ker T$ both conditions \eqref{I} and \eqref{Iq} are trivial. The implication \eqref{I}$\Rightarrow$\eqref{Iq} follows now by setting,  for each $h\in\dom(\xi_\rho)\setminus\ker T$,  $z=wt$ with $|w|=1$ and $\RE\seq{wTh,h}=|\seq{Th,h}|$. To see the converse, consider first the case $r\in[0,1)$. Then  $\dom(\xi_\rho)$ equals  the whole unit sphere in $\h$. Setting $t=|z|$ and using the inequality $\RE\seq{zTh,h}\leq t |\seq{Th,h}|$ we get that \eqref{Iq} implies \eqref{I} for $r\in[0,1)$. The case $ \rho=2$ is trivial. Now let $r\in(1,2)$. Observe first that for unit $h\in\h\setminus\dom(\xi_\rho)$ with $Th\neq 0$ the inequality $\phi_h(t)\geq0$ is automatically satisfied on $[0,1]$, as $\phi_h(t)$ is in such case a quadratic polynomial with the positive leading coefficient and at most one real root. In consequence, $\phi_h(t)\geq0$  on $[0,1]$  for all unit $\h$, which, again by setting $t=|z|$, is equivalent to \eqref{I}.
\end{proof}

It is well known that $T\in C_2$ if and only if $W(T)\subseteq \overline\Disc$.  We present the following generalisation.

\begin{theorem}\label{Wdysk}
Let $T$ be a bounded nonzero operator on a Hilbert space. Then $T$ has a $\rho$ dilation, i.e. $T\in C_\rho$, if and only if the deformed numerical range $W^\rho(T)$ is contained in the closed unit disc.
\end{theorem}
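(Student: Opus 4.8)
The plan is to collapse both sides of the stated equivalence to the same pointwise condition on unit vectors and then match them term by term.

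First I would reformulate the two sides. Since $\overline\Disc$ is closed and convex, the definition \eqref{dns} shows that $W^\rho(T)\subseteq\overline\Disc$ holds if and only if every generating point lies in $\overline\Disc$, i.e.
\[
\xi_\rho(h)\,\abs{\seq{Th,h}}\le 1\qquad\text{for all } h\in\dom(\xi_\rho);
\]
equivalently $\nu_\rho(T)\le1$ by \eqref{dnr}, using that the modulus, being convex, attains its supremum over the closed convex hull already on the generating set $\set{\xi_\rho(h)\seq{Th,h}:h\in\dom(\xi_\rho)}$. On the other hand, by the Sz.-N\'agy--Foia\c s characterisation stated around \eqref{I} together with the preceding Lemma, $T\in C_\rho$ is equivalent to condition \eqref{Iq}, namely $\phi_h(t)\ge0$ for all $t\in[0,1]$ and all $h\in\dom(\xi_\rho)$. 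Thus it suffices to prove, for each fixed unit $h\in\dom(\xi_\rho)$, the pointwise equivalence $\phi_h\ge0$ on $[0,1]$ $\Longleftrightarrow$ $\xi_\rho(h)\,\abs{\seq{Th,h}}\le1$.

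The vectors with $Th=0$ are trivial on both sides ($\phi_h\equiv1$ and $\xi_\rho(h)\seq{Th,h}=0$), so assume $\seq{Th,h}\neq0$; then $h\in\dom(\xi_\rho)$ forces $\Delta_\rho(h)>0$. Viewing $\phi_h(t)=(r-1)\norm{Th}^2t^2-r\abs{\seq{Th,h}}t+1$ as a quadratic, its discriminant is exactly $\Delta_\rho(h)>0$ (compare \eqref{Deltadef}), so it has two distinct real roots, and $\phi_h(0)=1>0$. The computational heart is the identity
\[
\xi_\rho(h)\,\abs{\seq{Th,h}}=\frac{1}{t_*},
\]
where $t_*$ is the smallest positive root of $\phi_h$; this follows by rationalising $2/\bigl(r\abs{\seq{Th,h}}+\sqrt{\Delta_\rho(h)}\bigr)$ with the help of the relation $\bigl(r\abs{\seq{Th,h}}\bigr)^2-\Delta_\rho(h)=4(r-1)\norm{Th}^2$ and comparing with \eqref{xidef}. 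That $\phi_h$ genuinely has a positive root is checked by separating the three orientations: for $r\in(1,2)$ the leading coefficient is positive and both roots are positive with $t_*$ the smaller one; for $r=1$ the polynomial is affine with single root $1/\abs{\seq{Th,h}}$; and for $r\in[0,1)$ it opens downward and, since the product of roots is negative, has exactly one positive root.

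Finally, in every one of these cases $\phi_h(0)=1>0$ and $\phi_h$ changes sign from positive to negative at $t_*$, so $\phi_h\ge0$ on $[0,1]$ if and only if $t_*\ge1$, that is, if and only if $1/t_*=\xi_\rho(h)\abs{\seq{Th,h}}\le1$. This is precisely the desired pointwise equivalence, and quantifying it over all $h\in\dom(\xi_\rho)$ gives the theorem. I expect the main obstacle to be the reciprocal-root identity together with the uniform handling of the sign of $r-1$, since the parabola reverses orientation exactly at $\rho=2$; the boundary vectors with $\seq{Th,h}=0\neq Th$, which fall outside $\dom(\xi_\rho)$ for $r>1$, must be kept out of the pointwise argument and enter only through the closure in \eqref{dns}, where they cause no difficulty.
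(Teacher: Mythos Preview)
Your reduction and the reciprocal-root identity $\xi_\rho(h)\abs{\seq{Th,h}}=1/t_*$ are correct, and this is exactly the mechanism the paper uses. However, the streamlined ``pointwise equivalence'' you aim for is actually \emph{false}, and this creates a genuine gap in the direction $W^\rho(T)\subseteq\overline\Disc\Rightarrow T\in C_\rho$ for $\rho\in[1,2)$.

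The problem is the step ``The vectors with $Th=0$ are trivial \dots\ so assume $\seq{Th,h}\neq0$.'' You have skipped the case $\seq{Th,h}=0\neq Th$. For $r\in[0,1)$ such vectors \emph{do} lie in $\dom(\xi_\rho)$ (indeed $\Delta_\rho(h)=4(1-r)\norm{Th}^2>0$, and by the definition $\xi_\rho(h)=1$), so condition \eqref{Iq} demands something of them. Here $\xi_\rho(h)\abs{\seq{Th,h}}=0\le1$ is automatic, while
\[
\phi_h(t)=1-(1-r)\norm{Th}^2t^2
\]
is nonnegative on $[0,1]$ only if $(1-r)\norm{Th}^2\le1$. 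So the pointwise equivalence breaks down exactly at these vectors, and your final remark about ``boundary vectors with $\seq{Th,h}=0\neq Th$'' covers only $r>1$, where they are outside $\dom(\xi_\rho)$ anyway.

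The paper repairs this by an approximation argument: since $T\neq0$, the set $\set{h:\seq{Th,h}=0}$ has empty interior in the unit sphere, so one chooses unit $h_n\to h$ with $\seq{Th_n,h_n}\neq0$; then $\phi_{h_n}\to\phi_h$ pointwise and your argument applied to each $h_n$ gives $\phi_{h_n}\ge0$ on $[0,1]$, hence $\phi_h\ge0$. Equivalently, one can observe that
\[
\xi_\rho(h_n)\abs{\seq{Th_n,h_n}}=\tfrac12\bigl(r\abs{\seq{Th_n,h_n}}+\sqrt{\Delta_\rho(h_n)}\bigr)\longrightarrow\sqrt{1-r}\,\norm{Th},
\]
so $W^\rho(T)\subseteq\overline\Disc$ forces $\sqrt{1-r}\,\norm{Th}\le1$, which is precisely the missing inequality. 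Either way, the global equivalence survives, but it is not obtained vector by vector; you need this extra limiting step for $\rho\in[1,2)$.
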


\begin{proof}
As before,  we will use in the proof the auxiliary parameter $r=2(1-\frac1\rho)$.
Assume first that $W^\rho(T)\subseteq \overline\Disc$,  we  show   that \eqref{Iq} is also satisfied.
The cases $\rho=1,2$ are known. Let $\rho\in(1,2)$, we fix $h\in\dom(\xi_\rho)=\set{h\in\h:\norm h=1}$. If $Th= 0$ then trivially $\phi_h(t)>0$, so we assume $Th\neq 0$. Then, $\phi_h(t)$ is a quadratic polynomial with the negative leading coefficient,  $\phi_h(0)=1$ and two different real roots
\begin{equation}\label{roots}
x_\pm=\frac{r|\seq{Th,h}|\pm  \sqrt{\Delta_\rho(h)}}{2\norm{Th}^2 (r-1)  }.
\end{equation}
Consider first the case $\seq{Th,h}\neq 0$. Note that
$$
x_+<0<1\leq x_-= |\xi_\rho(h)\seq{Th,h}|^{-1},
$$
 where the last inequality follows  by  assumption that $W^\rho(T)\subseteq \overline\Disc$.  Hence, $\phi_h(t)\geq0$ for $t\in[0,1]$.

{\color{black}
 Now take unit $h$ with $Th\neq0$,  $\seq{Th,h}= 0$. As $T\neq 0$, the set $\set{h\in \h:\seq{Th,h}=0}$  has an empty interior.  (Indeed, supposing the contrary one may take
 $h$ in the interior of $\set{h\in \h:\seq{Th,h}=0}$ and arbitrary $g\in\mathcal \h$ 
 and  note that
 $$
 0=\seq{T(h+tg),h+tg}=t(\seq{Th,g}+\seq{Tg,h}) + t^2\seq{Tg,g},\quad t\in\Real,
 $$
  getting $ \seq{Tg,g}=0$, which contradicts $T\neq 0$.)} Hence, there exists a sequence of unit vectors $h_n$ with $\seq{Th_n,h_n}\neq 0$ converging to $h$.   Note that $\phi_{h_n}(t)$ converges to $\phi_{h}(t)$ pointwise in $t$, which shows that   $\phi_h(t)\geq0$ for $t\in[0,1]$. Summarising,  we have so far showed that \eqref{Iq} holds for $ \rho\in[1,2] $.

Now let $\rho\in (2,+\infty)$ or equivalently $r\in(1,2)$, we fix $h\in\dom(\xi_\rho)$. If $Th= 0$ then trivially $\phi_h(t)>0$, so we assume $Th \neq 0$. Then $\phi_h(t)$ is a quadratic polynomial with the positive leading coefficient,  $\phi_h(0)=1$ and two  (possibly equal) real roots
given by \eqref{roots}. Since $h\in\dom(\xi_\rho)$ we have $\seq{Th,h}\neq 0$ and consequently
$$
1\leq x_-= |\xi_\rho(h)\seq{Th,h}|^{-1}<x_+.
$$
This shows that $\phi_h(t)\geq 0$ on $[0,1]$, i.e. \eqref{Iq} is satisfied.

Assume now that $T\in C_{\rho}$, i.e.  \eqref{Iq} is satisfied. It is enough to show that
$|\xi_\rho(h)\seq{Th,h}|\leq 1$ for $h\in\dom(\xi_\rho)$.  Let us fix $h\in\dom(\xi_\rho)$, as $0\in\overline\Disc$ we may assume that $\seq{Th,h}\neq 0$. The cases $\rho=1,2$ ($r=0,1$) are known, let now $r\in(0,1)$.
Then $\phi_h(t)$ is a quadratic polynomial with the negative leading coefficient,  $\phi_h(0)=1$ and two different real roots \eqref{roots}.  Note that
$$
x_+< x_-= |\xi_\rho(h)\seq{Th,h}|^{-1},
$$
and so as \eqref{Iq} is assumed we have that $x_-\geq 1$.

Let now $r\in(1,2)$. Assume that $\Delta_\rho(h)>0$. Then $\phi_h(t)$ is a quadratic polynomial with the positive leading coefficient and two different roots. Hence, $x_-<x_+$ and $x_+>0$ and \eqref{Iq} implies that $x_-\geq 1$.
\end{proof}

Immediately we get the following:

\begin{corollary}\label{infi}
We have that
\begin{equation}\label{eqniewiemy}
\nu_\rho(T)=\inf\set{{t>0}: t^{-1} T\in C_\rho}, \quad \rho\in[1,+\infty).
\end{equation}
Furthermore, $\nu_\rho^{-1}(T)T\in C_\rho$.
\end{corollary}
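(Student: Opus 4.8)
The plan is to obtain the corollary as a direct consequence of the $C_\rho$ characterization in Theorem~\ref{Wdysk} together with the positive homogeneity of the deformed numerical range recorded in Theorem~\ref{basic}\eqref{alpha}. The guiding observation is that rescaling $T$ by a positive number rescales $W^\rho(T)$ by the same factor, so the membership $t^{-1}T\in C_\rho$ can be rewritten as a single scalar inequality for $\nu_\rho(T)$, after which both assertions of the corollary fall out at once.

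Concretely, I would fix $t>0$ and use Theorem~\ref{basic}\eqref{alpha} with $\alpha=t^{-1}$ to get $W^\rho(t^{-1}T)=t^{-1}W^\rho(T)$. By Theorem~\ref{Wdysk}, $t^{-1}T\in C_\rho$ holds precisely when $W^\rho(t^{-1}T)\subseteq\overline\Disc$, i.e. when $t^{-1}W^\rho(T)\subseteq\overline\Disc$. Recalling from \eqref{dnr} that $\nu_\rho(T)=\sup_{z\in W^\rho(T)}|z|$ and that $\overline\Disc$ is the closed unit disc, this inclusion is equivalent to $t^{-1}\nu_\rho(T)\le 1$, that is, to $\nu_\rho(T)\le t$. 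Hence $\set{t>0:\ t^{-1}T\in C_\rho}=[\nu_\rho(T),+\infty)\cap(0,+\infty)$, and taking the infimum yields \eqref{eqniewiemy}. For the final assertion I would note that this admissible set contains its left endpoint: by the very definition of $\nu_\rho(T)$ one always has $W^\rho(T)\subseteq\nu_\rho(T)\,\overline\Disc$, so the choice $t=\nu_\rho(T)$ is itself admissible, whence $\nu_\rho^{-1}(T)\,T\in C_\rho$.

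The only point that needs care — and the sole potential obstacle — is that the reciprocal $\nu_\rho^{-1}(T)$ be meaningful, i.e. that $\nu_\rho(T)>0$ for the nonzero operator $T$. I would dispose of this by a short trichotomy mirroring the proof of Theorem~\ref{basic}\eqref{sigma}: if the spectral radius $\nu_\infty(T)>0$ then $\nu_\rho(T)\ge\nu_\infty(T)>0$ since $\sigma(T)\subseteq W^\rho(T)$ by Theorem~\ref{basic}\eqref{sigma}; if $T$ is quasinilpotent but not nilpotent, Proposition~\ref{Pquasi} gives $\nu_\rho(T)>0$ directly; and if $T$ is a nonzero nilpotent, restricting to a two-dimensional invariant subspace on which $T$ acts as a nonzero Jordan block and invoking \eqref{K} together with Proposition~\ref{2x22} again yields $\nu_\rho(T)>0$. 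With this in hand everything else is a formal rewriting of the characterization in Theorem~\ref{Wdysk}, so no genuine difficulty remains.
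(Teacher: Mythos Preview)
Your argument is correct and matches the paper's intended approach: the paper gives no separate proof and simply states that the corollary follows ``immediately'' from Theorem~\ref{Wdysk}, which is precisely the rescaling-plus-characterization route you spell out. Your additional trichotomy verifying $\nu_\rho(T)>0$ for nonzero $T$ is a welcome detail the paper leaves implicit.
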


As this fact is crucial for our investigations, the  above proof of Theorem \ref{Wdysk} is nontrivial, and it gives some insight  in the definition of $W^\rho(T)$,  we decided to present it in full detail.

\begin{remark}
The equation \eqref{eqniewiemy} above was  remarked without proof in \cite{badea2018spectral} in the following slightly different form, namely:
\begin{equation}\label{eqniewiemy2}
\inf\set{t>0:  t^{-1} T\in C_\rho}=\sup\set{\xi_\rho(h)\seq{Th,h}:\norm h=1,\ \Delta_\rho(h)\geq 0},\quad  \rho\in[1,+\infty) .
\end{equation}
{\color{black} Due to Mathematical Reviews it can be found as well in \cite{okubo97}, see also \cite{nakazi-okubo99}  for the case $\rho\in[0,2]$.}
As the proof does not seem to be clear (especially, for $r\in(1,2)$ it is not clear if the set on the right hand side of \eqref{eqniewiemy2} is equal to $W^\rho(T)$, see Remark \ref{closures}), we have decided to show a complete proof of Theorem \ref{Wdysk} above.
\end{remark}

We also get some  basic properties of $\nu_\rho(T)$, explaining the symbol $\nu_\infty(T)$  for the spectral radius of $T$.

\begin{corollary} \label{nuprop}
The following holds for any bounded linear operator $T\neq0$ on a Hilbert space $\h$:
\begin{enumerate}[\rm (i)]
\item\label{ni} the function $[1,+\infty)\ni \rho \mapsto \nu_\rho(T)$ is nonincreasing;
\item\label{b1} $\nu(T)\leq \nu_\rho(T) \leq \norm{T}$ for  $ \rho\in[1,2] $;
\item\label{b2} $\nu_\infty(T)\leq \nu_\rho(T)\leq \nu(T)$ for $\rho\in[2,+\infty)$;
\item\label{b3} $\nu_1(T)=\norm T$;
\item\label{b4}  $\nu_\rho(T)\to \nu_\infty(T)$ with $\rho\to \infty$;
\end{enumerate}
\end{corollary}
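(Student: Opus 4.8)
The plan is to derive all five items from the already-established monotonicity of $\xi_\rho$ (Proposition \ref{basicxi}) together with the dilation-theoretic characterisation in Corollary \ref{infi}, namely $\nu_\rho(T)=\inf\set{t>0:t^{-1}T\in C_\rho}$. For item \eqref{ni} I would argue directly from the definition of the deformed numerical range. Fix $1\leq\rho_1\leq\rho_2<+\infty$ and $h\in\dom(\xi_{\rho_2})$. By Proposition \ref{basicxi}\eqref{(i)} we have $h\in\dom(\xi_{\rho_1})$ and $\xi_{\rho_2}(h)\leq\xi_{\rho_1}(h)$, hence
$$
\abs{\xi_{\rho_2}(h)\seq{Th,h}}\leq\abs{\xi_{\rho_1}(h)\seq{Th,h}}.
$$
Since $\nu_\rho(T)=\sup\set{\abs{\xi_\rho(h)\seq{Th,h}}:h\in\dom(\xi_\rho)}$ (the supremum of moduli over the generating set of $W^\rho(T)$, which is unaffected by taking convex hull and closure), and the supremum for $\rho_2$ runs over a smaller domain with pointwise smaller values, we get $\nu_{\rho_2}(T)\leq\nu_{\rho_1}(T)$. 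This proves monotonicity.

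For items \eqref{b1} and \eqref{b2} I would use the endpoint evaluations. At $\rho=2$ we have $W^2(T)=\overline{W(T)}$ by \eqref{W2W}, so $\nu_2(T)=\nu(T)$, the numerical radius. Item \eqref{b1} then follows from \eqref{ni} applied on $[1,2]$ together with the upper bound $\nu_\rho(T)\leq\norm T$, which is immediate from Theorem \ref{basic}\eqref{sigmap} (as $W^\rho(T)\subseteq\norm T\overline\Disc$). Item \eqref{b2} follows from \eqref{ni} on $[2,+\infty)$ for the upper bound $\nu_\rho(T)\leq\nu(T)$, while the lower bound $\nu_\infty(T)\leq\nu_\rho(T)$ comes from Theorem \ref{basic}\eqref{sigma}: since $\sigma(T)\subseteq W^\rho(T)$, the spectral radius $\nu_\infty(T)=\sup_{z\in\sigma(T)}\abs z$ is at most $\nu_\rho(T)$. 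For item \eqref{b3} I would use Proposition \ref{basicxi}\eqref{(iii)}, which gives $\xi_1(h)=\norm{Th}/\abs{\seq{Th,h}}$, so that $\xi_1(h)\abs{\seq{Th,h}}=\norm{Th}$ whenever $\seq{Th,h}\neq0$; taking the supremum over unit vectors recovers $\sup_{\norm h=1}\norm{Th}=\norm T$, giving $\nu_1(T)=\norm T$.

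The only item requiring genuine work is \eqref{b4}, the limit $\nu_\rho(T)\to\nu_\infty(T)$ as $\rho\to\infty$. By \eqref{ni} the function is nonincreasing and bounded below by $\nu_\infty(T)$ (from \eqref{b2}), so the limit exists and satisfies $\lim_{\rho\to\infty}\nu_\rho(T)\geq\nu_\infty(T)$; the content is the reverse inequality. Here I would exploit Corollary \ref{infi}: write $s:=\lim_{\rho\to\infty}\nu_\rho(T)$ and suppose toward a contradiction that $s>\nu_\infty(T)$. The idea is that membership in $C_\rho$ for large $\rho$ is a very weak condition—the classes $C_\rho$ increase with $\rho$ and their union consists of all power-bounded-in-the-$\rho$-sense operators whose spectrum sits in $\overline\Disc$. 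More precisely, for any $t>\nu_\infty(T)$ the operator $t^{-1}T$ has spectral radius strictly less than $1$, and a standard estimate (via the condition \eqref{Iq} and the fact that $\norm{(t^{-1}T)^n}^{1/n}\to\nu_\infty(T)/t<1$) shows that $t^{-1}T\in C_\rho$ for all sufficiently large $\rho$, forcing $\nu_\rho(T)\leq t$ for large $\rho$ and hence $s\leq t$. Letting $t\downarrow\nu_\infty(T)$ gives $s\leq\nu_\infty(T)$, the desired contradiction. I expect the main obstacle to be justifying cleanly that $t^{-1}T\in C_\rho$ for large $\rho$ once the spectral radius is below $1$; the cleanest route is to verify condition \eqref{I} directly, noting that as $\rho\to\infty$ the term $2(1-\rho^{-1})\RE\seq{zT'h,h}$ and $(1-2\rho^{-1})\norm{zT'h}^2$ (with $T'=t^{-1}T$) must be controlled uniformly in $\abs z\leq1$, which reduces to a power-boundedness estimate for $T'$ that holds precisely because its spectral radius is strictly less than one.
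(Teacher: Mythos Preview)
Your treatment of items \eqref{ni}--\eqref{b3} is correct and essentially matches the paper, with one cosmetic swap: for \eqref{ni} you argue directly from Proposition~\ref{basicxi}\eqref{(i)} (pointwise monotonicity of $\xi_\rho$), whereas the paper invokes Corollary~\ref{infi} together with the well-known monotonicity of the classes $C_\rho$. Both are one-line arguments.

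For \eqref{b4} you take a genuinely different route. The paper argues \emph{internally}: pick $\rho_n\to\infty$ and near-maximising unit vectors $h_n\in\dom(\xi_{\rho_n})$ with $\xi_{\rho_n}(h_n)\abs{\seq{Th_n,h_n}}\to\lim_{\rho\to\infty}\nu_\rho(T)$. The constraint $\Delta_{\rho_n}(h_n)>0$ together with $r_n\to 2$ (via \eqref{DeltaRozpisana}) forces $\norm{Th_n}^2-\abs{\seq{Th_n,h_n}}^2\to 0$; combined with the elementary bound $\sqrt{\Delta_\rho(h)}\leq (2-r)\norm{Th}$ this yields $\xi_{\rho_n}(h_n)\abs{\seq{Th_n,h_n}}\leq\tfrac12\bigl(r_n\abs{\seq{Th_n,h_n}}+(2-r_n)\norm{Th_n}\bigr)$, and the right-hand side has the same limit as $\abs{\seq{Th_n,h_n}}$. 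Since $\abs{\seq{Th_n,h_n}}-\norm{Th_n}\to 0$ makes $h_n$ an approximate eigenvector sequence, any subsequential limit of $\abs{\seq{Th_n,h_n}}$ lies in $\abs{\sigma_{\ap}(T)}$ and hence is at most $\nu_\infty(T)$.

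Your route via Corollary~\ref{infi} is also valid and arguably more conceptual: for $t>\nu_\infty(T)$ one has $t^{-1}T\in C_\rho$ for all large $\rho$, whence $\nu_\rho(T)\leq t$ eventually. The fact you need is classical (Sz.-Nagy--Foia\c{s}), but your sketched justification is the weak point: condition \eqref{I} is not directly a power-boundedness statement, and the link you suggest is vague. A clean way to close the gap is the resolvent characterisation $S\in C_\rho\iff (\rho-2)I+(I-zS)^{-1}+(I-\bar z S^*)^{-1}\geq 0$ for $\abs z<1$; when $\nu_\infty(S)<1$ the resolvent $(I-zS)^{-1}=\sum_{n\geq0}z^nS^n$ is uniformly bounded on $\overline\Disc$, so the inequality holds once $\rho$ exceeds an explicit constant. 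With that fact in hand your argument for \eqref{b4} is complete.
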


\begin{proof}
By Corollary \ref{infi} and by monotonicity of the classes $C_\rho$ with respect to $\rho$ (e.g. \cite{NagyFoiasbook}), statement \eqref{ni} follows.

 Statement \eqref{b1} follows directly from  \eqref{ni} and Theorem \ref{basic}\eqref{sigmap}.
Statement \eqref{b2} follows directly from \eqref{ni} Theorem \ref{basic}\eqref{sigma}.

Statement \eqref{b3} is obvious. To see \eqref{b4} let us fix $h_n\in\h$ and $\rho_n\to 2$ such that $\|h_n\|=1$ and $\xi_{\rho_n}(h_n)|\seq{Th_n,h_n}|\to\lim\limits_{\rho\to 2}\nu_{\rho}(T)$. Since $\Delta_{\rho_n}(h_n)>0$, one can get $|\seq{Th_n,h_n}|-\|Th_n\|\to 0$. Thus
$$ \lim_{n\to \infty}\xi_{\rho_n}(h_n)|\seq{Th_n,h_n}|\leq \lim_{n\to \infty}\frac{1}{2}\big((2-\frac{2}{\rho_n})|\seq{Th_n,h_n}|+\frac{2}{\rho_n}\|Th_n\|\big) \leq  \nu_\infty(T).$$
\end{proof}

As a second main result of this section we show that the disc with radius $\nu_\rho(T)$ is a $\rho$--spectral set.

\begin{theorem}\label{constant1/2-q} For any bounded operator $T$ in a Hilbert space and for any polynomial $p$  we have
\begin{equation}\label{qcro}
\norm{p(T)}\leq \rho \sup_{\nu_\rho(T)\cdot\overline\Disc} |p|,\quad  \rho\in[1,+\infty) .
\end{equation}
\end{theorem}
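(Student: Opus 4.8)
The plan is to reduce, by scaling, to a sharp von Neumann-type inequality for the class $C_\rho$ on the closed unit disc. We may assume $T\neq 0$ (the case $T=0$ being trivial), so that $\nu:=\nu_\rho(T)>0$. Put $S=\nu^{-1}T$; by Corollary \ref{infi} we have $S\in C_\rho$. For a polynomial $p$ set $q(z)=p(\nu z)$, so that $q(S)=p(T)$ and $\sup_{\overline\Disc}|q|=\sup_{\nu_\rho(T)\cdot\overline\Disc}|p|$. Thus \eqref{qcro} is equivalent to the assertion that for every $S\in C_\rho$ and every polynomial $q$ one has $\norm{q(S)}\le\rho\,\sup_{\overline\Disc}|q|$. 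After normalising it suffices to treat the case $\sup_{\overline\Disc}|q|\le 1$, i.e. $q$ maps $\overline\Disc$ into itself, and to prove $\norm{q(S)}\le\rho$.

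First I would dispose of the case $q(0)=0$. By Theorem \ref{Wdysk}, membership $S\in C_\rho$ supplies a unitary $U$ on a space $\Kk\supseteq\h$ with $q(S)=\rho\,Pq(U)P^*$ for every polynomial $q$ with $q(0)=0$, where $P$ is the orthogonal projection of $\Kk$ onto $\h$. Since $U$ is unitary, $\norm{q(U)}\le\sup_{\partial\Disc}|q|\le 1$, whence $\norm{q(S)}\le\rho$. Here the dilation yields the sharp constant $\rho$ at once; the same bound extends to disc-algebra functions $\phi$ with $\phi(0)=0$ and $\sup_{\overline\Disc}|\phi|\le 1$ by approximating $\phi$ uniformly on $\overline\Disc$ by polynomials vanishing at $0$ (e.g.\ Fejér means) and using $\norm{S^n}\le\rho$ to pass to the limit.

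The genuinely harder case is $q(0)=a\neq 0$. Applying the triangle inequality to the full dilation formula $q(S)=(1-\rho)q(0)I+\rho Pq(U)P^*$ only gives $\norm{q(S)}\le(2\rho-1)\sup_{\overline\Disc}|q|$, which is too weak once $\rho>1$. To recover the constant $\rho$ I would follow the Berger--Stampfli scheme and strip off the constant term with a disc automorphism. Let $m_a(w)=\frac{w-a}{1-\bar a w}$ (the case $|a|=1$ forces $q\equiv a$ by the maximum principle and is trivial, so assume $|a|<1$), and set $\phi=m_a\circ q$; then $\phi$ is again a self-map of $\overline\Disc$ with $\phi(0)=0$, to which the previous paragraph applies. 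Inverting, $q=m_{-a}\circ\phi$, so that $q(S)=(\phi(S)+aI)(I+\bar a\,\phi(S))^{-1}$, the inverse existing because $\sigma(\phi(S))\subseteq\overline\Disc$ and $|a|<1$. The task is thereby reduced to bounding by $\rho$ the norm of this Möbius transform of $\phi(S)$.

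The main obstacle is exactly this last estimate, i.e.\ controlling the constant term: the dilation alone is insufficient, and one must use the full force of condition \eqref{I}/\eqref{Iq}, not merely the bound $\norm{\phi(S)}\le\rho$. Concretely I expect to establish that $\norm{(B+aI)(I+\bar a B)^{-1}}\le\rho$ for every $B\in C_\rho$ and $|a|<1$, equivalently $\norm{(B+aI)f}\le\rho\,\norm{(I+\bar a B)f}$ for all $f$. Expanding both sides, this amounts to the scalarised inequality
$$(\rho^2-|a|^2)\norm f^2+(\rho^2|a|^2-1)\norm{Bf}^2+2(\rho^2-1)\RE\big(\bar a\seq{Bf,f}\big)\ge 0,$$
which I would derive by integrating the positivity in \eqref{I} over $z$ in a disc; a single value of $z$ does not suffice, since the coefficients above cannot be matched to one instance of \eqref{I} with $|z|\le1$ once $|a|$ is close to $1$. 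This integrated positivity is precisely the analytic heart shared with the subordination principle for $C_\rho$ (so that, in particular, $\phi(S)\in C_\rho$). Granting this estimate, the chain closes to give $\norm{q(S)}\le\rho$, and undoing the scaling recovers \eqref{qcro}; for $\rho=2$ the whole argument specialises to the Okubo inequality \eqref{Cro1} recorded in the introduction.
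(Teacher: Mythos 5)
Your reduction is exactly the paper's: by Corollary \ref{infi} (equivalently, Theorem \ref{basic}\eqref{alpha} combined with Theorem \ref{Wdysk}) the operator $S=\nu_\rho(T)^{-1}T$ lies in $C_\rho$, and after rescaling the polynomial the theorem collapses to the von Neumann--type inequality $\norm{q(S)}\le\rho\,\sup_{\overline\Disc}|q|$ for $S\in C_\rho$. The paper disposes of this last inequality in one line, writing $\norm{q(S)}\le\rho\norm{q(U)}$ for a $\rho$-unitary dilation $U$; as you correctly observe, the dilation identity $q(S)=\rho Pq(U)P^*$ is only available when $q(0)=0$, and for general $q$ the triangle inequality yields only the constant $2\rho-1$. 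You have therefore put your finger on the one nontrivial analytic fact the paper is implicitly importing from the classical literature: the polynomial bound $\norm{q(S)}\le\rho\,\sup_{\overline\Disc}|q|$ for the class $C_\rho$ (Okubo--Ando; cf.\ \cite{okubo1975constants}, cited in the introduction for $\rho=2$). Your scaling, the case $q(0)=0$, the approximation argument, and the Berger--Stampfli reduction $q=m_{-a}\circ\phi$ are all correct.

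As a self-contained argument, however, the proposal has a genuine gap at the decisive step, since two ingredients are asserted rather than proved: (a) the mapping theorem $\phi(S)\in C_\rho$ for $\phi(0)=0$, $\sup_{\overline\Disc}|\phi|\le 1$, which you mention only parenthetically, and (b) the M\"obius estimate $\norm{(B+aI)(I+\bar a B)^{-1}}\le\rho$ for $B\in C_\rho$, which you only ``expect to establish''. For (b), the proposed derivation --- averaging \eqref{I} over $z$ against a positive measure $\mu$ on the disc for the fixed vector $f$ --- does not close as described: matching the coefficient of $\RE\bigl(\bar a\seq{Bf,f}\bigr)$ forces $\int z\,d\mu=-\rho(\rho+1)\bar a$, while the coefficient of $\norm f^2$ forces $\int d\mu\le\rho^2-|a|^2$, and $\rho(\rho+1)|a|>\rho^2-|a|^2$ once $|a|$ is near $1$; since $\bigl|\int z\,d\mu\bigr|\le\int d\mu$, no such positive measure exists. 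So a bare positive combination of instances of \eqref{I}, corrected only by nonnegative multiples of $\norm f^2$ and $\norm{Bf}^2$, cannot produce the required quadratic form; one must either bring in genuine sums of squares $\norm{\alpha f+\beta Bf}^2$ or apply the positivity to other vectors --- which is precisely the content of the Okubo--Ando argument. Either cite that theorem explicitly (which is what the paper's one-line proof effectively does) or carry out the estimate in full; as written, the chain does not close.
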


Note that for $ \rho=2$ we get \eqref{Cro1} and for $ \rho=1$ we get the von Neumann inequality \eqref{vonN}.

\begin{proof} We fix $ \rho\in[1,+\infty) $. By Proposition \ref{basic}\eqref{alpha} we have $\nu_\rho(\nu_\rho^{-1}(T)T)\leq 1$, therefore, by Theorem \ref{Wdysk} $\nu_\rho^{-1}(T)T$ is of class $C_\rho$. Hence,  one has the inequality
$$
\norm{p(\nu_\rho^{-1}(T)T)} \leq \rho \norm{p(U)}=   \rho\ \sup_{\overline\Disc}|p|.
$$
 where $U$ is an $\rho$--unitary dilation of $\nu_\rho^{-1}(T)T$ and $p$ is any polynomial.
Substituting $p(\nu_\rho(T)z)$ for $p(z)$ we get the claim.
\end{proof}

We conclude the section with a formula for $\nu_\rho(T)$, see further in Subsection \ref{Mathias} for yet another one. For this aim we define, following
 \cite{crouzeix2017numerical} and \cite{delyondelyon1999},  two Hermitian-operator-valued measures on a circle $\xi(s)=R\exp(\ii s)$ ($s\in[0,2\pi)$):
\begin{equation}\label{twomeas2}
\mu_0(\xi(s),T)=\frac{1}{2\pi\ii}(\overline{ \xi(s)} I-T^*)^{-1}(|\xi(s)|^2I-T^*T)(\xi(s) I-T)^{-1}\frac{\xi'(s)ds}{\xi(s)},
\end{equation}
and
\begin{equation}\label{twomeas1}
\mu_1(\xi(s),T)=\frac{1}{2\pi\ii}((\xi(s) I-T)^{-1}\xi'(s) - (\overline{\xi(s)} I-T^*)^{-1}\overline{\xi'(s)})ds.
\end{equation}
For brevity we will usually omit the variable $s$ and write $\xi\in R\partial\Disc$.

\begin{proposition}\label{measure_representation}
Let $T$ be a bounded operator in a Hilbert space $\h$. Then
$$
\nu_\rho(T)=\inf \set{R\geq \nu_\infty(T) \ :\ \mu_{\rho-1}(\xi,T)\geq 0 \text{ for } \xi\in R\partial \Disc},\quad  \rho\in[1,2] ,
$$
where the Hermitian operator valued measure $\mu_{t}(\xi,T)$ is defined as
\begin{equation}\label{mut}
\mu_t(\xi,T):=t\mu_1(\xi,T)+(1-t)\mu_0(\xi,T),\quad t\in[0,1].
\end{equation}
\end{proposition}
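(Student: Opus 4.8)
The plan is to reduce everything to the dilation-theoretic description of $\nu_\rho(T)$ furnished by Corollary \ref{infi}, which says $\nu_\rho(T)=\inf\set{R>0:R^{-1}T\in C_\rho}$. It therefore suffices to prove that for every $R>\nu_\infty(T)$ one has the equivalence
\[
R^{-1}T\in C_\rho \iff \mu_{\rho-1}(\xi,T)\geq 0 \text{ for all } \xi\in R\partial\Disc,
\]
and then to compare infima. Throughout I write $t=\rho-1\in[0,1]$, and for a fixed unit vector $h$ and a point $\xi=R\exp(\ii s)$ I introduce the resolvent vector $u=(\xi I-T)^{-1}h$. Since $R>\nu_\infty(T)$ ensures $\xi\notin\sigma(T)$, the assignment $h\mapsto u$ is a bijection of $\h$, so quantifying over all unit $h$ amounts to quantifying over all nonzero $u$.

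Next I would compute the two quadratic forms explicitly. On the circle $\xi'=\ii\xi$ and $|\xi|^2=R^2$, so \eqref{twomeas2} and \eqref{twomeas1} give, after substituting $h=(\xi I-T)u=\xi u-Tu$,
\[
\seq{\mu_0(\xi,T)h,h}=\frac{1}{2\pi}\left(R^2\norm u^2-\norm{Tu}^2\right)ds,\qquad \seq{\mu_1(\xi,T)h,h}=\frac{1}{\pi}\left(R^2\norm u^2-\RE[\xi\seq{u,Tu}]\right)ds.
\]
Forming $\mu_t=t\mu_1+(1-t)\mu_0$ as in \eqref{mut}, dividing by $\rho=1+t$, and using $\tfrac{2t}{\rho}=2(1-\rho^{-1})$, $\tfrac{1-t}{\rho}=2\rho^{-1}-1$, together with $\RE[\xi\seq{u,Tu}]=R\,\RE[\exp(-\ii s)\seq{Tu,u}]$, the coefficients collapse to
\[
\frac{2\pi}{\rho}\seq{\mu_t(\xi,T)h,h}/ds=R^2\norm u^2-2(1-\rho^{-1})R\,\RE[\exp(-\ii s)\seq{Tu,u}]+(1-2\rho^{-1})\norm{Tu}^2.
\]
This is precisely $R^2$ times the left-hand side of condition \eqref{I} for the operator $R^{-1}T$, evaluated at $g=u$ and $z=\exp(-\ii s)$. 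As $s$ runs through $[0,2\pi)$ the point $\xi$ runs through $R\partial\Disc$ and $z=\exp(-\ii s)$ through the whole unit circle; hence $\mu_{\rho-1}(\xi,T)\geq0$ on $R\partial\Disc$ is equivalent to \eqref{I} holding for $R^{-1}T$ for every $g$ and every $z$ of modulus one.

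It then remains to upgrade ``$|z|=1$'' to ``$|z|\leq1$''. This is where $\rho\in[1,2]$ enters: then $1-2\rho^{-1}\leq0$, so for each fixed phase the left-hand side of \eqref{I} is a \emph{concave} quadratic in $|z|$ whose value at $|z|=0$ is $\norm g^2\geq0$, whence its minimum over $[0,1]$ is attained at $|z|=1$. Thus positivity on the unit circle already forces \eqref{I} on the whole closed disc, giving $R^{-1}T\in C_\rho$. Finally, both sets $\set{R:R^{-1}T\in C_\rho}$ and $\set{R\geq\nu_\infty(T):\mu_{\rho-1}(\xi,T)\geq0}$ are upward closed, since $C_\rho$ is stable under multiplication by scalars of modulus at most one; combined with $\nu_\infty(T)\leq\nu_\rho(T)$, which holds by Theorem \ref{basic}\eqref{sigma}, their infima coincide and equal $\nu_\rho(T)$, as claimed.

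I expect the main obstacle to be the bookkeeping in the middle step, specifically extracting the factor $\xi'/\xi=\ii$ and carrying the substitution $h=(\xi I-T)u$ through so that the coefficients $2(1-\rho^{-1})$ and $1-2\rho^{-1}$ of \eqref{I} emerge exactly. The conceptual point, and the reason the proposition is confined to $\rho\in[1,2]$, is the concavity reduction: for $\rho>2$ the quadratic in $|z|$ opens upward, its minimum on $[0,1]$ is interior, and boundary positivity of $\mu_{\rho-1}$ would no longer detect membership in $C_\rho$.
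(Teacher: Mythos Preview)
Your argument is correct and follows essentially the same route as the paper: both reduce the claim to Corollary~\ref{infi} by showing that, for $R>\nu_\infty(T)$, positivity of $\mu_{\rho-1}$ on $R\partial\Disc$ is equivalent to condition~\eqref{I} for $R^{-1}T$, via the substitution $h=(\xi I-T)u$. The only noteworthy difference is the passage from $|z|=1$ to $|z|\leq1$: the paper observes that for $r\in[0,1]$ the left-hand side of~\eqref{I} is \emph{superharmonic} in $z$ (the term $(r-1)|z|^2\norm{Th}^2$ has nonpositive Laplacian) and invokes the minimum principle, whereas you fix the phase and use concavity of the quadratic in $|z|$ together with the value $\norm g^2\geq0$ at $|z|=0$. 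Both arguments are valid and equally short; yours is perhaps more elementary, while the paper's superharmonicity remark makes the role of the restriction $\rho\in[1,2]$ visible in a single stroke. One small wording point: for a concave function on $[0,1]$ the minimum is attained at \emph{some} endpoint, not necessarily at $|z|=1$; but since the value at $|z|=0$ is already nonnegative, your conclusion stands.
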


\begin{proof}
First we show that $\mu_{\rho}(\xi,T)\geq 0$ on $\nu_\rho(T)\overline\Disc$, which shows the inequality '$\geq$' as $\nu_\rho(T)\geq \nu_\infty(T)$ by Corollary \ref{nuprop}. For this aim let $R=\nu_\rho(T)$ and note that the operator $R^{-1}T$ is of class $\mathcal{C}_{\rho}$ by Theorem \ref{Wdysk}. Hence, condition \eqref{I} for $ R^{-1}T$ is satisfied, in particular for $|z|=1$ and $r=2(1-\frac1\rho)$ we have
$$
\norm h^2 - r R^{-1}  \RE \seq { z Th,h} + (r-1) R^{-2} \norm {Th}^2\geq 0, \quad  h\in\h,
$$
which is equivalent to
$$
\seq{\big(2R^2 - rR(z T+\bar z T^*) + 2(r-1)   T^*T\big)h,h}\geq 0, \quad  h\in\h.
$$
Hence,
$$
\seq{(r(R\bar z(R\bar z-T)+ Rz (Rz-T^*))+2(1-r)(R^2-T^*T)\frac1R) h,h}\geq 0, \quad  h\in\h,
$$
Replacing $h$ by $(zI-T)^{-1}h$ and $R\bar z$ by $\xi(s)$ we get
$$
\mu_t(\xi,T)\geq 0, \textnormal{ for } \xi\in R \partial \Disc,
$$
with $t=\frac{r}{2-r}=\rho-1$.

Now let $\mu_t(\xi,T)\geq 0$ on $R\partial\Disc$ for some $R\geq \nu_\infty(T)$. Then $R^{-1} T$ satisfies clearly condition \eqref{I}. By analogous arguments as before for  $t=\frac{r}{2-r}$ we have
\begin{equation}\label{superharmonic}
\norm h^2 - r R ^{-1}  \RE \seq { z Th,h} + (r-1)  R ^{-2} \norm {zTh}^2\geq 0, \quad  h\in\h,
\end{equation}
where $|z|=1.$ Since $r\in [0,1]$, the above function is superharmonic. Thus the inequality \eqref{superharmonic} holds for $|z|\leq 1$. In other words $R^{-1} T$ satisfies \eqref{I} with $\rho=\frac 2{2-r}$. In consequence, $R^{-1}T$ is of class $C_{\rho}$ and $R^{-1}\leq \nu_\rho(T)^{-1}$, by Corollary \ref{infi}.
\end{proof}

\section{Monotonicity and continuity of the deformed numerical range}\label{smono}

Next we turn to the questions of monotonicity and continuity of the sets $W^\rho(T)$ with respect to the parameter $\rho$.
The latter will be understood with respect to the Hausdorff distance on complex plane
\begin{equation*}
    \hau(E,F):=\max \set{\sup_{e\in E} \dist(e,F),\sup_{f\in F} \dist(f,E)}=\max \set{\sup_{e\in E} \inf_{f\in F}|e-f|,\sup_{f\in F}\inf_{e\in E}|f-e|},
\end{equation*}
where $E,F$ are compact subsets on complex plane. We formulate now the main result on monotonicity and continuity of the mapping $\rho\mapsto \overline {W^\rho(T)}$, below $\lim_{\rho_n} \overline{W^{\rho_n}(T)}$ denotes the limit with respect to the Hausdorff metric.

\begin{theorem}\label{monocont}
For a bounded operator $T$ on a Hilbert space the following holds.
\begin{enumerate}[\rm (i)]
\item\label{mono1} If $1\leq \rho_1\leq \rho_2 $  and $0\in W^{\rho_1}(T)$  then $W^{\rho_2}(T)\subseteq W^{\rho_1}(T)$.
\item\label{mono2} If $1\leq \rho_1\leq \rho_2 $  and  $0\in\Int W^{\rho_2}(T)$ then $W^{\rho_2}(T)\subseteq W^{\rho_1}(T)$.
\item\label{mono3} If  $\h$ is finite dimensional, $1\leq \rho_1\leq \rho_2 < 2$  and $0\in\Int W(T)$ then $W^{\rho_2}(T)\subsetneq W^{\rho_1}(T)$.
\item\label{cont} The function $W: \rho\mapsto W^\rho(T)$ is continuous on $[1,2]$  with to respect to the Hausdorff metric.
\end{enumerate}
\end{theorem}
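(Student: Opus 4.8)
The plan is to handle the four assertions separately, using throughout the elementary but crucial observation that every generator of $W^\rho(T)$, i.e. every point $\xi_\rho(h)\seq{Th,h}$, has phase $\arg\seq{Th,h}$ independent of $\rho$, while its modulus $\xi_\rho(h)|\seq{Th,h}|$ is controlled by Proposition \ref{basicxi}: nonincreasing in $\rho$ by \eqref{(i)}, and for $\rho<2$ strictly decreasing at non-eigenvectors by \eqref{(i')}. For \eqref{mono1} I would write $\overline{S_\rho}$ for the closed generating set of \eqref{dns} and note that each point of $\overline{S_{\rho_2}}$ equals $c\,p$, where $p\in\overline{S_{\rho_1}}$ is the corresponding point (same $h$) and $c=\xi_{\rho_2}(h)/\xi_{\rho_1}(h)\in[0,1]$; hence it lies on the segment $[0,p]$. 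Points with $\seq{Th,h}=0$ give the generator $0$, and the limiting ``circle'' points are covered by the same radial scaling because the $\rho_2$-radius is the smaller one. Thus $\overline{S_{\rho_2}}\subseteq\conv(\{0\}\cup\overline{S_{\rho_1}})$, and if $0\in W^{\rho_1}(T)$ this hull is exactly $W^{\rho_1}(T)$, giving the inclusion. For \eqref{mono2} it suffices by \eqref{mono1} to show $0\in W^{\rho_1}(T)$: if not, strict separation gives a unit $u$ and $c>0$ with $\RE(\bar u z)\ge c$ on $W^{\rho_1}(T)$; applied to the generators $\xi_{\rho_1}(h)\seq{Th,h}$ and divided by $\xi_{\rho_1}(h)>0$, this forces $\RE(\bar u\seq{Th,h})>0$ whenever $\seq{Th,h}\neq0$, so every point of $\overline{S_{\rho_2}}$ has nonnegative real part against $u$, contradicting $0\in\Int W^{\rho_2}(T)$.

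For \eqref{mono3} the non-strict inclusion follows from \eqref{mono2} once $0\in\Int W^{\rho_2}(T)$ is known: applying \eqref{mono2} to the pair $(\rho_2,2)$, which is legitimate since $0\in\Int W(T)=\Int W^2(T)$, yields $W^2(T)\subseteq W^{\rho_2}(T)$ and hence $0\in\Int W^2(T)\subseteq\Int W^{\rho_2}(T)$. For strictness (the case $\rho_1<\rho_2$; for $\rho_1=\rho_2$ the sets coincide) I would assume $W^{\rho_1}(T)=W^{\rho_2}(T)=:K$ and produce a point of $W^{\rho_1}(T)$ outside it. Since $0\in\Int W(T)\subseteq\Int K$ and $\rho_2<2$, the body $K$ must have an exposed point $z^\ast=\xi_{\rho_2}(h^\ast)\seq{Th^\ast,h^\ast}$ coming from a non-eigenvector $h^\ast$; the $\rho_1$-generator at $h^\ast$ is then $\tfrac{\xi_{\rho_1}(h^\ast)}{\xi_{\rho_2}(h^\ast)}z^\ast$ with ratio strictly above $1$ by Proposition \ref{basicxi}\eqref{(i')}, and as $z^\ast$ is exposed with positive support value (because $0\in\Int K$) this scaled point has strictly larger support value and so leaves $K$, while belonging to $W^{\rho_1}(T)=K$ — a contradiction.

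For \eqref{cont} I would pass to support functions, using that for compact convex sets $\hau$ coincides with the uniform distance of support functions, and in fact bound $\hau(W^\rho(T),W^{\rho'}(T))$ by $\sup_{\norm h=1}|\Xi_h(r)-\Xi_h(r')|$, where $\Xi_h(r):=\tfrac12\big(r|\seq{Th,h}|+\sqrt{r^2|\seq{Th,h}|^2+4(1-r)\norm{Th}^2}\big)$ is the phase-independent generator modulus and $r=2(1-1/\rho)\in[0,1]$; this uses that all generators, including the limiting circles arising when $\seq{Th,h}=0$, only move radially as $\rho$ varies. Then I would show $r\mapsto\Xi_h(r)$ is equicontinuous in $h$ on $[0,1]$: on each $[0,1-\delta]$ a direct derivative estimate gives a Lipschitz bound uniform in $h$, while near $r=1$ the elementary inequality $0\le \Xi_h(r)-|\seq{Th,h}|\le \sqrt{1-r}\,\norm T$ gives a uniform modulus of continuity (and $\Xi_h(1)=|\seq{Th,h}|$ recovers $W^2(T)=W(T)$). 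Together these force $\sup_h|\Xi_h(r)-\Xi_h(r')|\to0$ as $r\to r'$ on all of $[0,1]$, hence Hausdorff continuity of $\rho\mapsto W^\rho(T)$ on $[1,2]$.

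The step I expect to be the main obstacle is the strictness in \eqref{mono3}, namely guaranteeing that $K$ really has an exposed point generated by a non-eigenvector (equivalently, that the outward radial deformation is ``active'' somewhere on $\partial K$). I anticipate this will require using $0\in\Int W(T)$ to locate a boundary point of $W(T)$ that is not an eigenvalue, and is therefore attained only at non-eigenvectors, together with a separate check that the limiting-circle points — where $\xi_\rho(h)$ blows up but $\xi_\rho(h)\seq{Th,h}$ stays bounded — cause no trouble, since the $\rho_1$-circles strictly contain the $\rho_2$-circles and the very same scaling argument applies to them.
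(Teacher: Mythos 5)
Your parts (i) and (iv) are essentially the paper's own arguments: (i) is the radial--scaling observation (the paper's Lemma \ref{l3} adapted to $W^{\rho_i}(T)$), and (iv) is exactly the combination of a uniform-in-$h$ modulus of continuity for the generator modulus with the $1$-Lipschitz property of the convex-hull operator under the Hausdorff metric (the paper's Lemmas \ref{l1} and \ref{l2}; your $\sqrt{1-r}\,\norm{T}$ estimate near $r=1$ plays the same role as the paper's case split on $\sqrt{\Delta_{\rho_1}(h)}+\sqrt{\Delta_{\rho_2}(h)}$ versus $\sqrt{\eps}$). Part (ii) is where you genuinely diverge, and for the better: the paper proves $0\in W^{\rho_1}(T)$ by a connectedness argument on the set $Q=\set{\rho:0\in\Int(\conv V_{\rho,\rho_2}(T))}$, which needs the continuity lemmas as input, whereas your Hahn--Banach separation argument --- if $0\notin W^{\rho_1}(T)$ then $\RE(\bar u\seq{Th,h})>0$ for all $h\in\dom(\xi_{\rho_1})\supseteq\dom(\xi_{\rho_2})$, so $W^{\rho_2}(T)$ lies in a closed half-plane through $0$, contradicting $0\in\Int W^{\rho_2}(T)$ --- is self-contained, shorter, and uses only that the $\rho_1$- and $\rho_2$-generators at the same $h$ are positive multiples of each other. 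It is correct as stated (note that under the contradiction hypothesis no $h\in\dom(\xi_{\rho_1})$ can have $\seq{Th,h}=0$, since that would put $0$ into $W^{\rho_1}(T)$).

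For (iii) you have correctly isolated the one real issue, and as written it is a gap (one the paper itself glosses over: it takes $\lambda_2\in\partial W^{\rho_2}(T)\setminus\sigma(T)$ and simply asserts $\lambda_2=\xi_{\rho_2}(h)\seq{Th,h}$, which is not automatic for a boundary point of a convex hull). To close it: since $\rho_2<2$ and $\dim\h<\infty$, the closed generating set is compact, so by Minkowski's theorem every extreme point of $K:=W^{\rho_2}(T)$ lies in it; such a point is either (a) a generator at a non-eigenvector $h$ with $\seq{Th,h}\neq0$, (b) a limit point arising from $\seq{Th,h}=0\neq Th$ (your ``circle'' points, of modulus $\sqrt{1-r_2}\,\norm{Th}$, which move out to modulus $\sqrt{1-r_1}\,\norm{Th}$), or (c) an eigenvalue attained only at eigenvectors. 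In cases (a) and (b) your outward radial scaling finishes the proof, since $0\in\Int K$ forces any point strictly beyond a boundary point on its ray from the origin to leave $K$. If every extreme point were of type (c), then $K=\conv\sigma(T)$, and since $W(T)\subseteq K$ by part (ii) this gives $K=W(T)=\conv\sigma(T)$; picking a non-eigenvalue point $\mu=\seq{Th,h}$ on an edge of this polygon, $h$ is not an eigenvector, so $\xi_{\rho_2}(h)>1$ by Proposition \ref{basicxi}\eqref{(i')}, and $\xi_{\rho_2}(h)\mu\in K$ lies strictly outside $K$ on the ray through $\mu$ --- a contradiction. With this supplement your argument for (iii) is complete and matches the paper's intent.
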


\begin{remark}
First observe that   Example \ref{pictures} shows that the assumptions on location of the zero in \eqref{mono1}--\eqref{mono3} are indispensable.
Later on, in Section \ref{s:specconst}, we will assume that $0\in\Int W(T)$, which guarantees the monotonicity for all $ \rho\in[1,+\infty) $.
\end{remark}

For the proof of the theorem we need three lemmas, the first of which is well known.

\begin{lemma}\label{l1}
 The convex hull operator
$  V\mapsto  \conv V$, acting on the family of compact subsets of $\Comp$,
satisfies a Lipschitz condition $\hau(\conv(V_1),\conv(V_2))\leq \hau(V_1,V_2)$.
\end{lemma}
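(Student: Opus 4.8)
The plan is to reformulate the Hausdorff distance in terms of $\eps$-enlargements and then reduce the statement to the elementary fact that the convex hull operator commutes with Minkowski sums. First I would recall that for compact $E,F\subseteq\Comp$ one has the equivalent description
\[
\hau(E,F)=\inf\set{\eps\geq 0 : E\subseteq F+\eps\overline\Disc \text{ and } F\subseteq E+\eps\overline\Disc},
\]
where $+$ denotes the Minkowski sum and $F+\eps\overline\Disc=\set{x:\dist(x,F)\leq\eps}$ is the closed $\eps$-neighbourhood of $F$. Writing $d=\hau(V_1,V_2)$, this gives at once the two inclusions $V_1\subseteq V_2+d\overline\Disc$ and $V_2\subseteq V_1+d\overline\Disc$.

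Next I would apply the convex hull to the first inclusion, using monotonicity of $\conv$ under set inclusion together with the identity $\conv(A+B)=\conv A+\conv B$. Since $\overline\Disc$ is already convex, this yields
\[
\conv V_1\subseteq\conv\left(V_2+d\overline\Disc\right)=\conv V_2+d\overline\Disc,
\]
and by symmetry $\conv V_2\subseteq\conv V_1+d\overline\Disc$. Reading these two inclusions back through the characterization above produces $\hau(\conv V_1,\conv V_2)\leq d=\hau(V_1,V_2)$, which is exactly the claim.

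The only point requiring verification is the identity $\conv(A+B)=\conv A+\conv B$, and this is the mild ``obstacle.'' The inclusion $\subseteq$ is immediate, because $\conv A+\conv B$ is convex and contains $A+B$. For $\supseteq$ I would expand a sum of two convex combinations $\big(\sum_i\lambda_i a_i\big)+\big(\sum_j\mu_j b_j\big)=\sum_{i,j}\lambda_i\mu_j(a_i+b_j)$ and observe that the coefficients $\lambda_i\mu_j$ are nonnegative and sum to $1$, so the left side is a convex combination of points $a_i+b_j\in A+B$. Alternatively, one can bypass Minkowski sums entirely: given $p=\sum_i\lambda_i v_i\in\conv V_1$ with $v_i\in V_1$, compactness of $V_2$ supplies $w_i\in V_2$ with $|v_i-w_i|=\dist(v_i,V_2)\leq d$, and then $q:=\sum_i\lambda_i w_i\in\conv V_2$ satisfies $|p-q|\leq\sum_i\lambda_i|v_i-w_i|\leq d$, whence $\dist(p,\conv V_2)\leq d$; the symmetric estimate follows identically. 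Either route is routine, so no genuine difficulty arises, which is consistent with the lemma being quoted as well known.
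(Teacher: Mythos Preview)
Your proposal is correct. The paper's own proof is precisely your ``alternative'' route: it writes a point $\tilde x=\sum_i t_i x_i\in\conv V_1$ as a convex combination of points $x_i\in V_1$, approximates each $x_i$ by a point $y_i\in V_2$, and uses the triangle inequality to bound $\dist(\tilde x,\conv V_2)\leq\sum_i t_i\dist(x_i,V_2)\leq\hau(V_1,V_2)$, then swaps the roles of $V_1$ and $V_2$.

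Your primary route via the $\eps$-enlargement characterization and the Minkowski-sum identity $\conv(A+B)=\conv A+\conv B$ is a slightly different packaging of the same idea. It has the advantage of making the $1$-Lipschitz property essentially automatic once one knows that $\conv$ commutes with Minkowski sums; the paper's direct argument, by contrast, avoids invoking that identity and works entirely with distances and convex combinations. Both are short and elementary, and neither offers a real advantage over the other.
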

\begin{proof}
Let $V_i$ ($i=1,2)$  be  compact subsets of $\Comp$ and let $\tilde x\in\conv V_1$. Then there exist $x_1,x_2,\dots,x_{n}\in V_1$ and $t_1,t_2,\dots,t_{n}\in [0,1]$ such that $t_1+\dots+t_{n}=1$ and $\tilde{x}=t_1x_1+\dots+t_{n}x_{n}$ (it is enough to take $n=3$ by the Carath\'eodory theorem). Therefore,
\begin{align*}
    \dist(\tilde{x},\conv(V_2))&\leq \dist(\tilde{x},\set{t_1y_1+\dots+t_{n}y_{n}\colon y_1,\dots,y_n\in V_2})\\
    &\leq t_1\dist(x_1,V_2)+\dots+ t_{n}\dist(x_{n},V_2)\\
    &\leq \sup_{x\in V_1}\dist(x,V_2)\leq \hau(V_1,V_2).
\end{align*}
Thus $\sup_{x\in \conv(V_1)}\dist(x,\conv(V_2))\leq \sup_{x\in V_1}\dist(x,V_2)$. Reversing the roles of $V_1$ and $V_2$ completes the proof.
\end{proof}

For subsequent reasonings we need to define the following auxiliary sets.
Let $T$ be a bounded operator on a Hilbert space, for $1\leq \rho_1\leq \rho_2 $ we  set
\begin{equation*}
V_{\rho_1,\rho_2}(T)=\overline{\set{ \xi_{\rho_1}(h) \seq{Th,h}: h\in \dom(\xi_{\rho_2})}}.
\end{equation*}
Note that the definition is correct as $\dom(\xi_{\rho_2})\subseteq\dom(\xi_{\rho_1})$ by Proposition \ref{basicxi}\eqref{(i)}.

\begin{lemma}\label{l2}
For $\rho_0\in (1,+\infty)$ the mapping
\begin{equation}
   V_{\rho_0}:  [1,\rho_0)\ni \rho \to V_{\rho,\rho_0}(T)
\end{equation} is continuous with respect to the Hausdorff metric.
\end{lemma}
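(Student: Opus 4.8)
The plan is to bound $\hau\big(V_{\rho',\rho_0}(T),V_{\rho,\rho_0}(T)\big)$ by a supremum of pointwise distances and to show that this supremum tends to $0$ as $\rho'\to\rho$. Both sets are closures of images of the \emph{same} index set $\dom(\xi_{\rho_0})$ under $h\mapsto\xi_{\rho'}(h)\seq{Th,h}$ and $h\mapsto\xi_\rho(h)\seq{Th,h}$, which are well defined because $\dom(\xi_{\rho_0})\subseteq\dom(\xi_{\rho'})\cap\dom(\xi_\rho)$ by Proposition \ref{basicxi}\eqref{(i)}. Since a set and its closure have equal Hausdorff distance to any fixed set, and since the Hausdorff distance between two images over a common index set is at most the supremum of the distances between corresponding values,
$$
\hau\big(V_{\rho',\rho_0}(T),V_{\rho,\rho_0}(T)\big)\leq \sup_{h\in\dom(\xi_{\rho_0})} \big|\xi_{\rho'}(h)\seq{Th,h}-\xi_\rho(h)\seq{Th,h}\big|.
$$
It therefore suffices to make the right-hand side arbitrarily small.

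I would then simplify the integrand. With $r=2(1-\rho^{-1})$, $r_0=2(1-\rho_0^{-1})$ and $a=|\seq{Th,h}|$, $b=\norm{Th}$, formula \eqref{xidef} shows that for $\seq{Th,h}\neq0$ the number $\xi_\rho(h)\seq{Th,h}$ has modulus $w_r(h):=\tfrac12\big(ra+\sqrt{\Delta_r(h)}\big)$ and argument equal to that of $\seq{Th,h}$, which is independent of $\rho$; for $\seq{Th,h}=0$ the value is $0$ for every $\rho$. Hence the pointwise difference equals $|w_{r'}(h)-w_r(h)|$ when $\seq{Th,h}\neq0$ and $0$ otherwise. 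Writing $c=a/b\in(0,1]$ and recalling from \eqref{Deltadef} that $\Delta_r(h)=b^2(r^2c^2-4r+4)$, one factors out $b=\norm{Th}\leq\norm T$ to obtain $w_r(h)=b\,\psi_r(c)$, where $\psi_r(c)=\tfrac12\big(rc+\sqrt{q(r,c)}\big)$ and $q(r,c):=r^2c^2-4r+4$. Thus the displayed supremum is at most $\norm T\cdot\sup_{c}|\psi_{r'}(c)-\psi_r(c)|$, the range of $c$ being $(c_0,1]$ with $c_0^2=4(r_0-1)/r_0^2$ when $\rho_0>2$ and $(0,1]$ when $\rho_0\leq2$, since the domain condition $\Delta_{\rho_0}(h)>0$ translates exactly into $c>c_0$.

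The crux is a uniform positive lower bound on the radicand $q$. I fix $\rho\in[1,\rho_0)$, choose $r_1$ with $r<r_1<r_0$, and restrict to $r,r'\in[0,r_1]$. When $\rho_0\leq2$ one has at once $q(r,c)\geq q(r,0)=4(1-r)\geq4(1-r_1)>0$. When $\rho_0>2$ the map $q(r,\cdot)$ is increasing, so on the admissible range it is minimised at $c=c_0$, where it equals $g(r):=\tfrac{4(r_0-1)}{r_0^2}r^2-4r+4$; a direct computation gives $g(r_0)=0$, while the unique critical point $r_*=r_0^2/\big(2(r_0-1)\big)$ of $g$ exceeds $r_0$ for $r_0\in(1,2)$, so $g$ is strictly decreasing on $[0,r_0]$ and $q(r,c)\geq g(r)\geq g(r_1)=:\delta_0>0$ for all $r\in[0,r_1]$ and all admissible $c$. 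Once $q\geq\delta_0$, differentiating $\psi_r(c)$ in $r$ and using $|\partial_r q|=|2rc^2-4|\leq4$ and $c\leq1$ yields $|\partial_r\psi_r(c)|\leq\tfrac12+\delta_0^{-1/2}=:C$ uniformly in $c$. The mean value theorem gives $\sup_c|\psi_{r'}(c)-\psi_r(c)|\leq C|r'-r|$, and since $\rho\mapsto r$ is a smooth increasing bijection this tends to $0$ as $\rho'\to\rho$, establishing continuity of $V_{\rho_0}$ at $\rho$.

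The main obstacle is exactly this uniform lower bound. Near the boundary of the non-compact set $\dom(\xi_{\rho_0})$ the discriminant $\Delta_{\rho_0}$ vanishes, so one might fear that $\sqrt{\Delta_r}$ degenerates and equicontinuity fails. The resolution is that evaluating $\xi_\rho$ with the \emph{smaller} parameter $r<r_0$ keeps $\Delta_r(h)\geq\Delta_{\rho_0}(h)$, and, once $r$ is separated from $r_0$ by $r_1$, forces a genuine gap $\delta_0$ uniform over the whole domain; the monotonicity computation for $g$ is precisely what quantifies this gap.
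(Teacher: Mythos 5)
Your proof is correct, and it shares the paper's outer skeleton (bound the Hausdorff distance by the supremum over the common index set $\dom(\xi_{\rho_0})$ of the pointwise differences, then control the variation of $\xi_\rho(h)$ in $\rho$), but it resolves the crux --- the square-root singularity of $\sqrt{\Delta_\rho(h)}$ near the boundary of the domain --- in a genuinely different way. The paper makes no attempt to bound the radicand away from zero: after normalising $\norm T\leq 1$ it splits according to whether $\sqrt{\Delta_{\rho_1}(h)}+\sqrt{\Delta_{\rho_2}(h)}\leq\sqrt\eps$ or not, using $|\sqrt a-\sqrt b|\leq\min\bigl(\sqrt a+\sqrt b,\ |a-b|/(\sqrt a+\sqrt b)\bigr)$ to get the bound $\tfrac12(\eps+8\sqrt\eps)$; this yields \emph{uniform} H\"older-$\tfrac12$ continuity on all of $[1,\rho_0)$ with an explicit modulus. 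You instead normalise by $\norm{Th}$, observe that the domain condition $\Delta_{\rho_0}(h)>0$ translates into $c>c_0$, and prove that the normalised radicand $q(r,c)$ admits a uniform positive lower bound $\delta_0=g(r_1)$ once $r$ is separated from $r_0$ (your monotonicity computation for $g$, including $g(r_0)=0$ and the location of the critical point $r_*>r_0$, checks out, as does the bound $|\partial_r q|\leq 4$). This buys a locally Lipschitz estimate $C|r'-r|$, which is quantitatively stronger near each fixed $\rho$, at the price that $C\to\infty$ as $r_1\to r_0$, so you only get continuity at each point of $[1,\rho_0)$ rather than a single uniform modulus on the whole interval; both conclusions suffice for the lemma as stated and for its use in Theorem \ref{monocont}. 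Your observation that dividing out $b=\norm{Th}$ is what makes the lower bound on the radicand uniform (since $\Delta_r(h)\geq b^2\delta_0$ alone would degenerate as $b\to0$) is exactly the right point to isolate.
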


\begin{proof}

Fix $\rho_0\in (1,+\infty)$ and take  $\rho_1,\rho_2\in [0,\rho_0)$ with $|\frac2{\rho_1}-\frac{2}{\rho_2}|\leq \eps$ with some $\eps>0$. For $r_i=2(1-\frac1{\rho_i})$ it means $|r_1-r_2|\leq\eps$. By Theorem \ref{basic}\eqref{alpha} we may assume that $\norm T\leq1$, so that $\norm{Th}\leq 1$ and $|\seq{Th,h}|\leq 1$ for $h\in\dom(\xi_{\rho_0})$. 
In this setting it is clear that
\begin{equation*}
\dist(\xi_{\rho_i}(h)\seq{Th,h},V_{\rho_j,\rho_0}(T))\leq|\xi_{\rho_1}(h)-\xi_{\rho_2}(h)|,\quad h\in\dom(\xi_{\rho_0}),\ i,j=1,2.
\end{equation*}
This implies the following inequality for the Hausdorff metric
\begin{align}\label{ha1}
    \hau(V_{\rho_1,\rho_0}(T),V_{\rho_2,\rho_0}(T))&\leq \sup_{h\in\dom(\xi_{\rho_0})} |\xi_{\rho_1}(h)-\xi_{\rho_2}(h)|\\&\leq \sup_{h\in\dom(\xi_{\rho_0})}\frac{1}{2}\left(\eps+|\sqrt{\Delta_{\rho_1}(h)}-\sqrt{\Delta_{\rho_2}(h)}|\right).\notag
\end{align}
Observe that if $h\in\dom(\xi_{\rho_0})$ is such that  $\sqrt{\Delta_{\rho_1}(h)}+\sqrt{\Delta_{\rho_2}(h)}\leq \sqrt\eps$
then
\begin{equation}\label{ha2}
|\sqrt{\Delta_{\rho_1}(h)}-\sqrt{\Delta_{\rho_2}(h)}|\leq \sqrt\eps
\end{equation}
 and if $h\in\dom(\xi_{\rho_0})$ is such that  $\sqrt{\Delta_{\rho_1}(h)}+\sqrt{\Delta_{\rho_2}(h)}> \sqrt\eps$
then
\begin{equation}\label{osz}
|\sqrt{\Delta_{\rho_1}(h)}-\sqrt{\Delta_{\rho_2}(h)}|\leq\frac{|\Delta_{\rho_1}(h)-\Delta_{\rho_2}(h)|}{\sqrt \eps}\leq\frac{ |r_1^2-r_2^2|+4|r_1-r_2|}{\sqrt{\eps}} \leq 8\sqrt \eps.
\end{equation}
Taking together \eqref{ha1}, \eqref{ha2} and  \eqref{osz} and the fact that $\eps>0$ was arbitrary we get the claim.

\end{proof}

\begin{lemma}\label{l3} If $1\leq \rho_1<\rho_2<\rho_3$ and $0\in\conv V_{\rho_1,\rho_3}$ then $\conv V_{\rho_2,\rho_3}\subseteq \conv V_{\rho_1,\rho_3}$.
\end{lemma}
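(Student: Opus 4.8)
The plan is to exploit the fact that the sets $V_{\rho_1,\rho_3}(T)$ and $V_{\rho_2,\rho_3}(T)$ are generated by the \emph{same} family of vectors $h\in\dom(\xi_{\rho_3})$, the only difference being that the scalar $\xi_{\rho_1}(h)$ is replaced by the smaller scalar $\xi_{\rho_2}(h)$. Concretely, fix $h\in\dom(\xi_{\rho_3})$; since $\dom(\xi_{\rho_3})\subseteq\dom(\xi_{\rho_2})\subseteq\dom(\xi_{\rho_1})$ and $\xi_{\rho_2}(h)\leq\xi_{\rho_1}(h)$ by Proposition \ref{basicxi}\eqref{(i)}, I want to write the generator $\xi_{\rho_2}(h)\seq{Th,h}$ of $V_{\rho_2,\rho_3}$ as a point on the segment joining the origin to the corresponding generator $\xi_{\rho_1}(h)\seq{Th,h}$ of $V_{\rho_1,\rho_3}$.

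First I would dispose of the degenerate case $\seq{Th,h}=0$: there both generators equal $0$, which lies in $\conv V_{\rho_1,\rho_3}$ by hypothesis. When $\seq{Th,h}\neq0$ (so in particular $Th\neq0$), the membership $h\in\dom(\xi_{\rho_3})$ forces $\Delta_{\rho_3}(h)>0$, and the monotonicity of $\Delta_\rho$ in $r$ from Proposition \ref{basicxi} gives $\Delta_{\rho_1}(h)\geq\Delta_{\rho_3}(h)>0$; hence $\xi_{\rho_1}(h)>0$, and likewise $\xi_{\rho_2}(h)>0$ since $r_2>0$. Setting $\mu_h:=\xi_{\rho_2}(h)/\xi_{\rho_1}(h)\in(0,1]$ I obtain
\begin{equation*}
\xi_{\rho_2}(h)\seq{Th,h}=\mu_h\,\bigl(\xi_{\rho_1}(h)\seq{Th,h}\bigr)+(1-\mu_h)\cdot 0,
\end{equation*}
so this point is a convex combination of $0$ and $\xi_{\rho_1}(h)\seq{Th,h}\in V_{\rho_1,\rho_3}$.

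Next, using the hypothesis $0\in\conv V_{\rho_1,\rho_3}$ together with $\xi_{\rho_1}(h)\seq{Th,h}\in\conv V_{\rho_1,\rho_3}$ and the convexity of $\conv V_{\rho_1,\rho_3}$, the whole segment, hence the point $\xi_{\rho_2}(h)\seq{Th,h}$, lies in $\conv V_{\rho_1,\rho_3}$. Thus every generator of $V_{\rho_2,\rho_3}$ belongs to $\conv V_{\rho_1,\rho_3}$. To finish I would pass to the closure: since $V_{\rho_1,\rho_3}$ is bounded (the quantity $\xi_{\rho_1}(h)|\seq{Th,h}|=\tfrac12\bigl(r_1|\seq{Th,h}|+\sqrt{\Delta_{\rho_1}(h)}\bigr)$ is controlled by $\norm T$ and $r_1$) and closed by definition, it is compact, and so its convex hull $\conv V_{\rho_1,\rho_3}$ is compact, in particular closed. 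Consequently $V_{\rho_2,\rho_3}\subseteq\conv V_{\rho_1,\rho_3}$, and taking convex hulls of both sides yields $\conv V_{\rho_2,\rho_3}\subseteq\conv V_{\rho_1,\rho_3}$.

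The argument is essentially a single radial-scaling observation, so I do not expect a serious analytic obstacle; the points requiring care are the positivity of $\xi_{\rho_1}(h)$, which is needed to form $\mu_h$ and is precisely where I use that $h\in\dom(\xi_{\rho_3})$ already guarantees $\Delta_{\rho_1}(h)>0$, and the role of the standing assumption $0\in\conv V_{\rho_1,\rho_3}$. The latter is indispensable: it is exactly what allows the segment from each generator back to the origin to remain inside $\conv V_{\rho_1,\rho_3}$, and without it the inclusion may fail, as illustrated by Example \ref{pictures}.
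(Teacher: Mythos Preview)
Your proof is correct and follows essentially the same route as the paper's: both reduce to showing that each generator $\xi_{\rho_2}(h)\seq{Th,h}$ with $h\in\dom(\xi_{\rho_3})$ lies on the segment from $0$ to $\xi_{\rho_1}(h)\seq{Th,h}\in V_{\rho_1,\rho_3}$, and then invoke the hypothesis $0\in\conv V_{\rho_1,\rho_3}$. The paper obtains the positivity of $\xi_{\rho_1}(h)$ slightly more directly via the lower bound $\xi_{\rho_2}(h)\geq 1-\tfrac{1}{\rho_2}>0$, and leaves the closure/compactness step implicit, but these are cosmetic differences only.
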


\begin{proof}
It is enough to show that any $\lambda_0$ of the form
$\lambda_0=\xi_{\rho_2}(h_0)\seq{Th_0,h_0}$ with $h_0\in\dom({\xi_{\rho_3}})$ belongs to
$\conv{V_{\rho_1,\rho_3}(T)}$. 
If $\lambda_0=0$ then trivially $\lambda_0\in \conv{V_{\rho_1,\rho_3}(T)}$, so we assume that $\lambda_0\neq 0$ and hence $\seq{Th_0,h_0}\neq 0$ and $\xi_{\rho_1}(h_0) \geq \xi_{\rho_2}(h_0) \geq 1-\frac1{\rho_2}>0$.
Note that 
$\lambda_1=\xi_{\rho_1}(h_0)\seq{Th_0,h_0}\in V_{\rho_1,\rho_3}(T)$ and $\lambda_0=\frac{\xi_{\rho_2(h_0)}}{\xi_{\rho_1(h_0)}}\lambda_1$.     As we assumed that $0\in \conv{V_{\rho_1,\rho_3}(T)}$, we get $\lambda_0\in \conv{V_{\rho_1,\rho_3}(T)}$.
\end{proof}

\begin{proof}[Proof of Theorem \ref{monocont}]

\eqref{mono1}  The proof follows the same lines as the proof of Lemma \ref{l3} with $\conv{V_{\rho_i,\rho_3}(T)}$ replaced by $W^{\rho_i}(T)$, $i=1,2$ and $h_0\in\dom(\xi_{\rho_2})$ (however, the statement itself is not a direct consequence of Lemma \ref{l3}).

\eqref{mono2} Assume that $0\in\Int W^{\rho_2}(T)$. We show that $0\in \overline{W^{\rho_1}(T)}$, which will finish the proof.
Consider the set
$$
Q:=\set{\rho\in[1,\rho_2): 0\in\Int (\conv{V_{\rho,\rho_2}(T))}}.
$$
As the mapping $V_{\rho_2}:[1,\rho_2)\ni \rho\mapsto \conv{V_{\rho,\rho_2}(T)}$ is,  by Lemmas \ref{l1} and \ref{l2}, continuous, the set $Q$ is nonempty and open in $[1,+\infty)$. Furthermore, by Lemma \ref{l3} we see that if $\rho\in Q$ then $[\rho,\rho_2)\subseteq Q$.
Hence, to show that $Q$ is closed in $[1,\rho_2)$ it is enough to take a decreasing sequence $Q\ni q_n\searrow \rho$ and show that $\rho\in Q$. Applying once again  Lemma \ref{l3} we have that $\conv V_{q_{n},\rho_2}\subseteq \conv V_{q_{n+1},\rho_2}$ and therefore, the distance of $0$ to  $\partial(\conv V_{q_{n},\rho_2})$ is bounded from below by some $\eps>0$. By continuity of $V_{\rho_2}$ we get
$0\in \conv V_{\rho,\rho_2}$, which shows $Q=[1,\rho_2)$. To finish the proof note that  since $\rho_1\in Q$, we have
$0\in\conv V_{\rho_1,\rho_2}\subseteq W^{\rho_1}(T)$.

\eqref{mono3} In view of \eqref{mono1} and \eqref{mono2} it is enough to show that $W^{\rho_1}(T)\neq W^{\rho_2}(T)$. Take $\lambda_2\in\partial W^{\rho_2}(T)\setminus\sigma(T)$. Then $\lambda_2=\xi_{\rho_2}(T)\seq{Th,h}$ for some $h\in\h$. By Proposition \ref{basicxi}\eqref{(ii)} we have $\xi_{\rho_1}(h)>\xi_{\rho_2}(h)$.  Hence, $\lambda_1=\xi_{\rho_1}(h)\seq{Th,h}\notin W^{\rho_2}(T)$, as otherwise $\lambda_2$ would lie in the interior of $W^{\rho_2}(T)$ due to $0\in\Int W(T)\subseteq W^{\rho_2}(T)$. Clearly, $\lambda_1\in W^{\rho_1}(T)$.

\eqref{cont} The statement follows directly from Lemmas \ref{l1}, \ref{l2} and the fact that
$\conv V_{\rho,1}(T)=W^{\rho}(T)$ for $ \rho\in[1,2] $. 

\end{proof}

\section{Spectral constants of the deformed numerical range}\label{s:specconst}

Following Crouzeix \cite{crouzeix2004bounds} we define
$$
\Psi_\Omega(T):=\sup\set{\norm{f(T)} : f\in\h^\infty(\Omega), \ \norm{f}_{L^\infty(\Omega)\leq 1   }},
$$
where $\Omega$ is an open, non empty, convex subset of $\Comp$ and $\h^\infty(\Omega)$ denotes the Hardy space. Note that
$\Psi_\Omega(T)<+\infty$, provided that $\sigma(T)\subseteq \Omega$, due to the Cauchy integral formula.
Furthermore, we define
$$
\Psi_\rho(T):=\sup\set{\norm{f(T)} : f \text{ polynomial}, \ \norm{f}_{L^\infty(W^\rho(T))\leq 1   }}, \quad  \rho\in[1,+\infty) .
$$
The following result was proved as \cite[Lemma 2.2]{crouzeix2004bounds} for $ \rho=2$, we show that it is true for all $ \rho\in[1,2] $ under the additional assumption that $0\in\Int W(T)$. Note that while for $ \rho=2$ this assumption may be simply omitted, due to the law $W(T+\alpha I)=W(T)+\alpha$, $\alpha\in\Comp$, we cannot drop this assumption for $\rho<2$.

\color{black}
\begin{lemma}\label{spectral}
Let $T$ be a matrix. Then for any closed  set $V$ containing the numerical range $W(T)$  we have
\begin{equation}\label{VV}
\sup\set{\norm{f(T)} : f \text{ polynomial}, \ \norm{f}_{L^\infty(V)\leq 1   }}=\sup\set{\Psi_\Omega(T): \Omega\supseteq V,\  \Omega\text{ open }}
\end{equation}
In particular, if $0\in\Int W(T)$ and $ \rho\in[1,2] $ then
$$
\Psi_\rho(T)=\sup\set{\Psi_\Omega(T): \Omega\supseteq W^\rho(T),\  \Omega\text{ open }}.
$$
\end{lemma}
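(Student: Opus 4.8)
Write $L:=\sup\set{\norm{f(T)} : f \text{ polynomial},\ \norm{f}_{L^\infty(V)}\leq 1}$ and $R:=\sup\set{\Psi_\Omega(T):\Omega\supseteq V,\ \Omega\text{ open}}$ for the two sides of \eqref{VV}; the plan is to prove $L\le R$ and $R\le L$ separately. Once \eqref{VV} is established, the ``in particular'' part is immediate: for $\rho\in[1,2]$ the assumption $0\in\Int W(T)$ together with the monotonicity Theorem~\ref{monocont}\eqref{mono2} (applied with $\rho_2=2$ and $\rho_1=\rho$, recalling $W^2(T)=\cl{W(T)}$, so that the hypothesis reads $0\in\Int W(T)$) gives $W(T)\subseteq W^\rho(T)$; hence $V:=W^\rho(T)$ is a closed set containing $W(T)$ and the left-hand side of \eqref{VV} is exactly $\Psi_\rho(T)$ by definition. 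Throughout I use that $\sigma(T)\subseteq W(T)\subseteq V$, that $V$ is compact (in the application $V=W^\rho(T)$ is a compact convex set), and that for a matrix the value $g(T)$ depends only on $g$ together with finitely many of its derivatives at the eigenvalues of $T$.

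For the inequality $L\le R$ I would fix a polynomial $f$ with $\norm{f}_{L^\infty(V)}\le 1$ and, given $\eps>0$, inflate $V$ to the open neighbourhood $\Omega_\delta:=\set{z:\dist(z,V)<\delta}\supseteq V$. Since $f$ is uniformly continuous on the compact set $\cl{\Omega_1}$, for $\delta$ small enough one has $\norm{f}_{L^\infty(\Omega_\delta)}\le 1+\eps$; as $f$ is entire and $\Omega_\delta$ is bounded, $f\in\h^\infty(\Omega_\delta)$, so $f/(1+\eps)$ is admissible in the definition of $\Psi_{\Omega_\delta}(T)$ and $\norm{f(T)}\le(1+\eps)\Psi_{\Omega_\delta}(T)\le(1+\eps)R$. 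Letting $\eps\to0$ and taking the supremum over $f$ yields $L\le R$. When $V$ is convex the sets $\Omega_\delta$ are convex as well, so this respects the convexity built into the definition of $\Psi_\Omega$.

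The substantial direction is $R\le L$. Here I would fix an open $\Omega\supseteq V$ and a function $f\in\h^\infty(\Omega)$ with $\norm{f}_{L^\infty(\Omega)}\le 1$, and approximate $f(T)$ by polynomials. The plan is to choose a compact set $K$ with $V\subseteq K\subseteq\Omega$ whose complement $\Comp\setminus K$ is connected — for a convex $V$ one may simply take a small closed neighbourhood of $V$, which is again convex, hence polynomially convex. By Runge's theorem there are polynomials $p_n\to f$ uniformly on $K$. Two things then happen at once: first, since $K\subseteq\Omega$ and $\abs f\le 1$ on $\Omega$, we get $\norm{p_n}_{L^\infty(V)}\le\norm{p_n}_{L^\infty(K)}\to\norm{f}_{L^\infty(K)}\le 1$; second, uniform convergence on $K$ gives uniform convergence on the open set $\Int K\supseteq V\supseteq\sigma(T)$, so by the Cauchy estimates all derivatives $p_n^{(k)}$ converge to $f^{(k)}$ locally uniformly near the eigenvalues of $T$, whence $p_n(T)\to f(T)$. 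Combining these with the defining inequality $\norm{p_n(T)}\le L\,\norm{p_n}_{L^\infty(V)}$ and passing to the limit gives $\norm{f(T)}\le L$; taking the supremum over $f$ and then over $\Omega$ yields $R\le L$.

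I expect the genuine obstacle to sit in this last direction, in two places. The first is the passage from uniform convergence of $p_n$ to convergence of $p_n(T)$: because $T$ may carry nontrivial Jordan structure, one must control not only the values but also the derivatives of the approximants at the eigenvalues, which is why the approximation has to be upgraded from uniform-on-$K$ to locally-uniform on the open set $\Int K$ containing $\sigma(T)$. The second is securing \emph{polynomial} (rather than merely rational) Runge approximation, i.e. producing a compact $K$ with $V\subseteq K\subseteq\Omega$ and $\Comp\setminus K$ connected; this is automatic when $V$ is convex — exactly the case $V=W^\rho(T)$ relevant to the corollary — but would in general require $V$ to be polynomially convex.
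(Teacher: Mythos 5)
Your proof is correct in the setting that actually matters (compact convex $V$, in particular $V=W^\rho(T)$), but it follows a genuinely different route from the paper's. The paper's proof is a one-line reduction to Lemma~2.2 of \cite{crouzeix2004bounds}: its only new ingredient is the observation that an eigenvalue of $T$ on $\partial V$ must lie on $\partial W(T)$ (since $W(T)\subseteq V$), hence is a normal, semisimple eigenvalue. That is exactly what Crouzeix's dilation argument requires: there one approximates $f\in\h^\infty(\Omega)$ by the dilates $f_t(z)=f(c+t(z-c))$, and at a boundary eigenvalue carrying a nontrivial Jordan block the derivatives $f_t^{(k)}(\lambda)$ need not converge, so semisimplicity is essential. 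Your argument (inflating $V$ to open $\delta$-neighbourhoods for $L\le R$, and Runge approximation on a compact $K$ with $V\subseteq\Int K\subseteq K\subseteq\Omega$ for $R\le L$) sidesteps that difficulty entirely, because every open set occurring in the right-hand supremum contains the compact set $V\supseteq\sigma(T)$, hence keeps $\sigma(T)$ at positive distance from its boundary; this is why you never need normality of boundary eigenvalues and why your Cauchy-estimate step $p_n(T)\to f(T)$ is sound. What you gain is a self-contained proof; what the paper gains is brevity plus compatibility with Crouzeix's framework, where the closed-set and open-set spectral constants are compared for the \emph{same} region and the boundary issue is unavoidable. Two small caveats to record: the paper defines $\Psi_\Omega$ only for \emph{convex} open $\Omega$, so the supremum on the right of \eqref{VV} should be read over convex sets and your $\Omega_\delta$ and $K$ must be taken convex — automatic when $V$ is convex, which is the case in the ``in particular'' part, but your $L\le R$ step would not survive for a genuinely non-convex $V$ (a polynomial small on $V$ may be large on $\conv V$); and $V$ must be bounded for the inflation and Runge steps, which again holds in the application. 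Your derivation of the ``in particular'' statement from Theorem~\ref{monocont}\eqref{mono2} is exactly right.
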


\begin{proof}
Let $\lambda$ be an eigenvalue of $T$ lying on the boundary of $V$. As $W(T)\subseteq V$ the eigenvalue $\lambda$ lies also on the boundary of $W(T)$. Now, the   proof follows the same lines as in \cite[Lemma 2.2]{crouzeix2004bounds}.
%
%

\end{proof}
\color{black}

\begin{remark}
For the sake of completeness let us mention that it is also possible to show that for a finite dimensional $\h$ the second statement of Lemma \ref{spectral} holds for $\rho\in[1,2+\eps]$, where $\eps>0$ depends on $T$.  The proof is, however, rather technical and the result will not be used later on.
\end{remark}


Note that in general the function $T\mapsto \Psi_1(T)$ is not continuous, as the example of the matrix $T=\matp{0 & \eps \\ 0 & 0}$ with $\eps\geq 0$ shows. Nonetheless, the function $\Psi_\rho(T)$ is continuous with respect to $\rho$ for fixed $T$:

\begin{theorem}\label{qcont}
Let $T$ be a bounded operator on a Hilbert space with $\ 0\in\Int W(T)$, then the function
\begin{equation}\label{function}
[1,2]\ni \rho\mapsto \Psi_\rho(T)
\end{equation}
 is continuous and increasing.
 If, additionally,     $\h$ is finite dimensional, then the function in \eqref{function} is either constantly equal to $1$ or strictly increasing.
\end{theorem}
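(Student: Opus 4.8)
The plan is to handle the three claims in turn, using throughout that $0\in\Int W(T)=\Int W^2(T)\subseteq\Int W^\rho(T)$ for $\rho\in[1,2]$, so that Theorem \ref{monocont}\eqref{mono2} gives the nesting $W^{\rho_2}(T)\subseteq W^{\rho_1}(T)$ for $1\le\rho_1\le\rho_2\le2$. For a compact set $V\supseteq\sigma(T)$ write $\Psi_V(T):=\sup\set{\norm{f(T)}:f\text{ polynomial},\ \norm f_{L^\infty(V)}\le1}$, so $\Psi_\rho(T)=\Psi_{W^\rho(T)}(T)$; enlarging $V$ tightens the constraint, hence $V\mapsto\Psi_V(T)$ is nonincreasing. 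Together with the nesting this gives $\Psi_{\rho_1}(T)\le\Psi_{\rho_2}(T)$, i.e. $\rho\mapsto\Psi_\rho(T)$ is increasing.

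For continuity the engine is the scaling identity
$$
\Psi_{(1+\delta)V}(T)=\Psi_V\bigl((1+\delta)^{-1}T\bigr),\qquad\delta>0,
$$
obtained from the bijective substitution $f(z)\mapsto f(z/(1+\delta))$ on admissible functions, which sends $f(T)$ to $f((1+\delta)^{-1}T)$. Since the $W^\rho(T)$ are convex, contain $0$ in the interior, and depend continuously on $\rho$ in the Hausdorff metric (Theorem \ref{monocont}\eqref{cont}), nearby parameters satisfy a two-sided squeeze $W^{\rho'}(T)\subseteq W^{\rho}(T)\subseteq(1+\delta)W^{\rho'}(T)$ with $\delta\to0$. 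I would first prove left continuity at $\rho_0$ by fixing $V=W^{\rho_0}(T)$ and letting $\rho_1\nearrow\rho_0$: the squeeze and monotonicity give $0\le\Psi_{\rho_0}(T)-\Psi_{\rho_1}(T)\le\Psi_V(T)-\Psi_V((1+\delta)^{-1}T)$, and the right-hand side tends to $0$ because $\Psi_V((1+\delta)^{-1}T)=\Psi_{(1+\delta)V}(T)\le\Psi_V(T)$ while, for each fixed admissible $f$, $\norm{f((1+\delta)^{-1}T)}\to\norm{f(T)}$ by norm continuity of the polynomial calculus, so $\liminf_{\delta\to0}\Psi_V((1+\delta)^{-1}T)\ge\Psi_V(T)$. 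This direction uses nothing about the location of $\sigma(T)$.

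Right continuity is the harder half. Here I would take near-extremal polynomials $f_\rho$ for the smaller sets $W^\rho(T)$ ($\rho>\rho_0$), rescale them to $g(z)=f_\rho(z/(1+\delta))$---admissible for $W^{\rho_0}(T)$ because $W^{\rho_0}(T)\subseteq(1+\delta)W^\rho(T)$---and conclude $\Psi_{\rho_0}(T)\ge\norm{f_\rho((1+\delta)^{-1}T)}\ge\norm{f_\rho(T)}-\norm{f_\rho(T)-f_\rho((1+\delta)^{-1}T)}$. To make the last term uniformly $O(\delta)$ independently of the degree of $f_\rho$, I would represent $f_\rho(T)-f_\rho((1+\delta)^{-1}T)$ by a Cauchy integral over a fixed contour contained in $\Int W^{\rho_0}(T)$ and encircling $\sigma(T)$, on which $\abs{f_\rho}\le1$. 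This needs $\sigma(T)\subseteq\Int W^{\rho_0}(T)$; the only obstruction is spectrum of $T$ on $\partial W^{\rho_0}(T)$, which necessarily lies on $\partial W(T)$ and is therefore normal. I would isolate the corresponding reducing part (on which $\Psi_V$ equals $1$ and behaves continuously under rescaling), exactly as in the proof of Lemma \ref{spectral}, and apply the contour estimate to the complementary block whose spectrum is interior. Making this splitting rigorous is the main technical point of the continuity part.

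For the finite--dimensional dichotomy I would show that failure of strict monotonicity forces the value $1$ throughout. A failure produces, by monotonicity, $\rho_1<\rho_2<2$ (shrinking $\rho_2$ below $2$ if necessary) with $\Psi_\rho(T)\equiv c$ on $[\rho_1,\rho_2]$. In finite dimension the supremum defining $\Psi_{\rho_1}(T)$ is attained by some polynomial $f$ with $\norm f_{L^\infty(W^{\rho_1}(T))}=1$ and $\norm{f(T)}=c$; since $W^{\rho_2}(T)\subseteq W^{\rho_1}(T)$, the equality $c=\Psi_{\rho_2}(T)$ forces $\norm f_{L^\infty(W^{\rho_2}(T))}=1$, so $\abs f$ attains its maximum over $W^{\rho_1}(T)$ already on the strictly smaller set $W^{\rho_2}(T)$ (strict by Theorem \ref{monocont}\eqref{mono3}). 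By the maximum modulus principle the maximizer is either interior to $W^{\rho_1}(T)$, whence $f$ is constant and $c=1$, or a common boundary point of $W^{\rho_1}(T)$ and $W^{\rho_2}(T)$, which by Proposition \ref{basicxi}\eqref{(i')},\eqref{(iv)} is an eigenvalue with a reducing eigenvector; peeling this eigenvalue off and inducting on $\dim\h$ again yields $c=1$. The delicate remaining step---the main obstacle of the whole theorem---is to upgrade ``$c=1$ on $[\rho_1,\rho_2]$'' to ``$\Psi_\rho(T)\equiv1$ on all of $[1,2]$'': since $\Psi_\rho(T)\ge1$ and is nondecreasing one gets $\Psi_\rho(T)=1$ on $[1,\rho_2]$, and I would rule out a later strict increase by noting that $\Psi_\rho(T)=1$ forces $\nu_\rho(T)=\nu_1(T)=\norm T$ (test $f(z)=z$) and by turning the resulting rigidity of $\rho\mapsto\nu_\rho(T)$, via Proposition \ref{2x22} and Corollary \ref{nuprop}, into normality of $T$, in which case every $W^\rho(T)\supseteq\sigma(T)$ is a spectral set and $\Psi_\rho(T)\equiv1$.
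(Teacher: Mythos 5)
Your monotonicity argument coincides with the paper's (Theorem \ref{monocont}\eqref{mono2} plus the trivial antitonicity of $V\mapsto\Psi_V(T)$). For continuity, though, the paper does not re-derive anything: it quotes Lemma 2.2 of \cite{crouzeix2004bounds} for the Hausdorff-continuity and monotonicity of $\Omega\mapsto\Psi_\Omega(T)$ on open sets, passes from the closed sets $W^\rho(T)$ to open neighbourhoods via Lemma \ref{spectral}, and composes with Theorem \ref{monocont}\eqref{cont}. What you propose is essentially a from-scratch reproof of that cited lemma (scaling identity plus a Cauchy-integral estimate over a fixed contour). Your left-continuity argument is complete. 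Your right-continuity argument is correct in outline, but the step you yourself flag --- splitting off the spectrum sitting on the boundary as a reducing normal part --- is exactly the content of Lemma \ref{spectral}, which the paper proves only for matrices; for bounded operators (the stated generality of the theorem) boundary spectrum of $\overline{W(T)}$ need not consist of eigenvalues, so neither your sketch nor the paper fully closes this point. On the continuity claim you are therefore doing more work than the paper without ending up in a worse place.

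The genuine gap is in the dichotomy. The paper gets it for free from the \emph{global} form of Crouzeix's strict monotonicity: if $\Psi_2(T)>1$ then $\Omega_2\subsetneq\Omega_1$ forces $\Psi_{\Omega_1}(T)<\Psi_{\Omega_2}(T)$; hence either $\Psi_2(T)=1$ (and monotonicity gives $\Psi_\rho(T)\equiv1$) or the map is strictly increasing via Theorem \ref{monocont}\eqref{mono3}. The hypothesis for strictness is a condition on $T$ alone, not on the pair of sets being compared. Your route extracts $c=1$ from local constancy and then must upgrade ``$\Psi_\rho(T)=1$ on $[1,\rho_2]$'' to ``$\Psi_\rho(T)=1$ on $[1,2]$''; you correctly identify this as the main obstacle, but the mechanism you propose fails. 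From $\Psi_\rho(T)=1$ you only get $\nu_\rho(T)=\norm T$, and that rigidity does not imply normality: $T=\matp{0&2\\0&0}\oplus\matp{2}$ has $2\in\sigma(T)$, hence $\nu_\rho(T)=\norm T=2$ for every $\rho$, yet $T$ is not normal and $\Psi_2(T)>1$ (test $p(z)=z(2-z)$). Two intermediate steps are also unsupported: the supremum defining $\Psi_{\rho_1}(T)$ need not be attained by a polynomial (a normal-families limit is only an $\h^\infty$ function on the interior, and using it again runs into boundary spectrum), and Proposition \ref{basicxi}\eqref{(i')} compares $\xi_{\rho_1}(h)$ with $\xi_{\rho_2}(h)$ for one and the same $h$, which does not show that a common boundary point of the two convex hulls $W^{\rho_1}(T)$ and $W^{\rho_2}(T)$ is an eigenvalue. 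The missing ingredient is precisely the global statement of \cite{crouzeix2004bounds}, Lemma 2.2, on which the paper's proof rests.
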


\begin{proof}
The proof is based on Lemma 2.2 from \cite{crouzeix2004bounds}. It was shown therein  that for fixed $T$ the  function $\Omega\mapsto \Psi_\Omega(T)$ is decreasing and continuous with respect to the  Hausdorff metric, note that this part of the proof did not use the fact that $\h$  is finite dimensional.
Combining this information with Lemma \ref{spectral} and Theorem \ref{monocont} \eqref{cont} we get the desired continuity in \eqref{function}.

Monotonicity in \eqref{function} results directly from Theorem \ref{monocont}\eqref{mono2}.
To prove that the function in question is strictly increasing if $\Psi_2(T)>1$  we  use Theorem \ref{monocont}\eqref{mono3}. In the light of this result, combined again with Lemma \ref{spectral}, it is enough to remark that  $\Psi_{\Omega_1}(T)< \Psi_{\Omega_2}(T)$ if $\Omega_2\subsetneq\Omega_1$ and $\Psi_2(T)>1$, see again \cite{crouzeix2004bounds}, Lemma 2.2.

\end{proof}

\begin{remark}
If the  condition $0\in\Int W(T)$ is not satisfied, it is tempting to replace $T$ by $T-\alpha I$, where $\alpha$ is chosen apropriateltely, e.g., it is the barycentre of the spectrum of $T$. However, one should be careful with such manipulations, as $W^\rho(T)$ is not translable, see Remark \ref{trans} and Example \ref{pictures}. Hence,  the corresponding spectral constants  $\Psi_\rho(T)$ and $\Psi_\rho(T-\alpha I)$ may differ.
\end{remark}

We are also able now to complete the analysis from Proposition \ref{2x22}.
\begin{corollary} For $T=\matp{0 & 2\\ 0 & 0}$ we have
$$
\Psi_\rho(T)=\rho,\quad  \rho\in[1,+\infty) .
$$
\end{corollary}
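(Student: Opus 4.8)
The plan is to prove the two inequalities $\Psi_\rho(T)\le\rho$ and $\Psi_\rho(T)\ge\rho$ separately, both of which reduce almost immediately to results already in hand. The starting point is Proposition \ref{2x22}, which gives $W^\rho(T)=\frac{2}{\rho}\,\overline{\Disc}$; in particular $\nu_\rho(T)=2/\rho$ and the set $W^\rho(T)$ coincides with the disc $\nu_\rho(T)\cdot\overline{\Disc}$. Consequently, for any polynomial $p$ one has $\sup_{\nu_\rho(T)\cdot\overline{\Disc}}|p|=\norm{p}_{L^\infty(W^\rho(T))}$, so that the conclusion of Theorem \ref{constant1/2-q} reads
\[
\norm{p(T)}\le\rho\,\norm{p}_{L^\infty(W^\rho(T))},\qquad \rho\in[1,+\infty).
\]
Taking the supremum over all polynomials $p$ with $\norm{p}_{L^\infty(W^\rho(T))}\le1$ yields $\Psi_\rho(T)\le\rho$.

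For the reverse inequality I would exhibit a single extremal polynomial. Take the linear map $f(z)=\frac{\rho}{2}\,z$. On $W^\rho(T)=\frac{2}{\rho}\,\overline{\Disc}$ its sup-norm is $\frac{\rho}{2}\cdot\frac{2}{\rho}=1$, so $f$ is admissible in the definition of $\Psi_\rho(T)$. On the other hand $f(T)=\frac{\rho}{2}\,T$, and since $\norm{T}=2$ we obtain $\norm{f(T)}=\frac{\rho}{2}\cdot 2=\rho$. Hence $\Psi_\rho(T)\ge\norm{f(T)}=\rho$, and combining with the upper bound gives $\Psi_\rho(T)=\rho$ for all $\rho\in[1,+\infty)$.

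There is essentially no hard step here: the content is entirely packaged in Theorem \ref{constant1/2-q} together with the explicit shape of $W^\rho(T)$ from Proposition \ref{2x22}, and the only genuine observation is that the extremal polynomial can be taken to be linear. If one prefers a self-contained argument not invoking Theorem \ref{constant1/2-q}, the main obstacle would be to verify optimality directly. Since $T^2=0$, every polynomial acts as $f(T)=\matp{f(0) & 2f'(0) \\ 0 & f(0)}$, so $\norm{f(T)}$ depends only on $f(0)$ and $f'(0)$. Rescaling $g(w)=f(\frac{2}{\rho}w)$ to the unit disc and applying the Schwarz--Pick estimate $|g'(0)|\le 1-|g(0)|^2$ bounds $|f'(0)|$ in terms of $|f(0)|$; a short computation of the largest singular value of the above $2\times 2$ matrix then shows that, among admissible $f$, one has $\norm{f(T)}\le\rho$ with equality precisely when $f(0)=0$ and $|f'(0)|=\rho/2$. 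This optimization is routine but slightly tedious, which is exactly why routing the upper bound through Theorem \ref{constant1/2-q} is preferable.
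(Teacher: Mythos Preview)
Your proof is correct and is essentially identical to the paper's: both use Proposition~\ref{2x22} to identify $W^\rho(T)=\frac{2}{\rho}\overline\Disc$, invoke Theorem~\ref{constant1/2-q} for the upper bound $\Psi_\rho(T)\le\rho$, and test against the identity polynomial (the paper uses $p(z)=z$, you use its rescaling $f(z)=\tfrac{\rho}{2}z$) for the lower bound. Your additional Schwarz--Pick discussion is a nice aside but not needed.
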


\begin{proof}
Take any $ \rho\in[1,+\infty) $. By Proposition \ref{2x22} we have that $W^\rho(T)=\frac2\rho\overline\Disc$. Considering the polynomial $p(z)=z$ we get $\Psi_q(T)\geq \rho $. On the other hand Theorem \ref{constant1/2-q} gives us the opposite inequality.
\end{proof}

Recall that the Crouzeix conjecture  says that  $\Psi_2(T)\leq 2$ for any bounded operator $T$,  (equivalently: for any matrix $T$, see \cite{Crouzeix2007}). Note the following corollary from our considerations above.

\begin{corollary} The Crouzeix conjecture does not hold if and only if there exists a matrix $T$ with $0\in\Int W(T)$ and $\rho\in[1,2)$ such that $\Psi_\rho(T)=2$.
\end{corollary}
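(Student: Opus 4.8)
The plan is to prove both implications of the equivalence by leveraging the monotonicity and continuity results established for $\Psi_\rho(T)$ in Theorem \ref{qcont}, together with the sandwich inequality \eqref{qcro} of Theorem \ref{constant1/2-q}. First I would recall that the Crouzeix conjecture is equivalent to the statement $\Psi_2(T)\leq 2$ for every matrix $T$, so the conjecture \emph{fails} precisely when there exists some matrix $T$ with $\Psi_2(T)>2$. The goal is therefore to translate the existence of such a ``bad'' matrix at $\rho=2$ into the existence of a (possibly different) matrix attaining the value exactly $2$ at some $\rho<2$, and conversely.

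For the direction $(\Leftarrow)$ I would argue as follows. Suppose there is a matrix $T$ with $0\in\Int W(T)$ and some $\rho\in[1,2)$ with $\Psi_\rho(T)=2$. Since $0\in\Int W(T)$, Theorem \ref{qcont} applies and tells us that the map $\sigma\mapsto\Psi_\sigma(T)$ is increasing on $[1,2]$; hence $\Psi_2(T)\geq\Psi_\rho(T)=2$. To get a strict inequality I would invoke the dichotomy in Theorem \ref{qcont}: because $\Psi_\rho(T)=2>1$, the function cannot be constantly equal to $1$, so it is strictly increasing, giving $\Psi_2(T)>\Psi_\rho(T)=2$. This strict inequality exhibits a counterexample to the Crouzeix conjecture.

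For the direction $(\Rightarrow)$ I would assume the conjecture fails, so some matrix $S$ satisfies $\Psi_2(S)>2$. The subtlety is that such an $S$ need not satisfy $0\in\Int W(S)$. Here I would use that $\Psi_2$ is translation invariant (since $W^2(S)=\overline{W(S)}$ and $W(S+\alpha I)=W(S)+\alpha$), so replacing $S$ by $S-\alpha I$ with $\alpha$ chosen in the interior of $W(S)$ yields a matrix $T:=S-\alpha I$ with $\Psi_2(T)=\Psi_2(S)>2$ and $0\in\Int W(T)$; one should check $\Int W(S)$ is nonempty, which holds unless $W(S)$ is a segment, in which case $S$ is normal and $\Psi_2(S)=1$, contradicting $\Psi_2(S)>2$. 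Now apply Theorem \ref{qcont} to $T$: the map $\rho\mapsto\Psi_\rho(T)$ is continuous on $[1,2]$, with $\Psi_1(T)\leq 1\cdot 1=1$ by the von Neumann inequality $\Psi_1(T)=1$ (via \eqref{qcro} at $\rho=1$, giving $\norm{p(T)}\leq\sup_{\nu_1(T)\overline\Disc}|p|$ and $W^1(T)=\nu_1(T)\overline\Disc$) and $\Psi_2(T)>2$. By the intermediate value theorem there exists $\rho\in[1,2)$ with $\Psi_\rho(T)=2$.

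The main obstacle I anticipate is the careful handling of the $(\Rightarrow)$ direction, specifically justifying that one may reduce to a matrix with $0\in\Int W(T)$ without losing the property $\Psi_2>2$, since $\Psi_\rho$ is \emph{not} translation invariant for $\rho\neq 2$ (see Remark \ref{trans}). The key point that makes the reduction legitimate is that the translation is performed only at the level $\rho=2$, where translation invariance does hold, and only \emph{after} this reduction is the one-parameter family $\rho\mapsto\Psi_\rho(T)$ for the fixed translated matrix $T$ analysed. A secondary technical check is establishing $\Psi_1(T)=1$ cleanly, which follows from Theorem \ref{constant1/2-q} at $\rho=1$ combined with $\nu_1(T)=\norm T$ from Corollary \ref{nuprop}\eqref{b3}, so that $W^1(T)$ is the disc on which the von Neumann inequality is an equality in the supremum bound.
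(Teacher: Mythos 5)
Your overall strategy coincides with the paper's: for the forward implication you normalise using the translation invariance of $\Psi_2$ (checking that $\Int W(S)\neq\emptyset$, exactly as the paper does, via the observation that otherwise $S$ is an affine image of a Hermitian matrix and $\Psi_2(S)=1$), and then invoke Theorem \ref{qcont}; for the converse you use the dichotomy ``constantly $1$ or strictly increasing'' from Theorem \ref{qcont}. Both steps are the ones the paper uses, and your backward direction is complete and correct.

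There is, however, one concretely flawed step in your forward direction: the claim that $W^1(T)=\nu_1(T)\overline\Disc$, from which you deduce $\Psi_1(T)=1$. This identity is false in general. By Theorem \ref{basic}\eqref{0}, a Hermitian $T$ has $W^1(T)\subseteq\Real$, and more generally $W^1(T)$ is only known to be \emph{contained} in $\norm T\overline\Disc$ with $\nu_1(T)=\norm T$ (Theorem \ref{basic}\eqref{sigmap}, Corollary \ref{nuprop}\eqref{b3}); a convex set can touch the circle of radius $\norm T$ without being the whole disc (the example $T=\matp{0&1&0\\0&0&0\\0&0&2}$ gives a ``cone'' strictly smaller than $2\overline\Disc$). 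Since $\Psi_1(T)$ is defined as a supremum over polynomials bounded on $W^1(T)$, not on $\nu_1(T)\overline\Disc$, the von Neumann inequality \eqref{qcro} at $\rho=1$ does not by itself yield $\Psi_1(T)=1$. What your intermediate value argument actually requires is only $\Psi_1(T)\leq 2$; but the best bound that follows from the inclusions established in the paper ($\overline{W(T)}=W^2(T)\subseteq W^1(T)$ when $0\in\Int W(T)$, combined with the Crouzeix--Palencia estimate) is $\Psi_1(T)\leq 1+\sqrt2>2$, which is not enough, and one cannot appeal to $\Psi_1(T)\leq\Psi_2(T)\leq 2$ precisely because the conjecture is being assumed false. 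To be fair, the paper's own proof of the forward implication is a one-line appeal to Theorem \ref{qcont} and leaves this same endpoint value at $\rho=1$ implicit; but in your writeup the gap is made visible by an explicitly incorrect justification, so you should either prove $\Psi_1(T)\leq 2$ (or $\Psi_1(T)=1$) honestly, or make clear that this is the missing ingredient.
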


\begin{proof} Suppose that there exists a matrix $T$ with $\Psi_2(T)>2$. Observe that  $W(T)$ has a nonempty interior, otherwise $T$ is an affine transformation of a Hermitian matrix and $\Psi_2(T)=1$, contradiction.  Due to $\Psi_2(T+\alpha I)=\Psi_2(T)$ for $\alpha\in\Comp$, one can find a matrix $T$ with $0\in\Int W(T)$ and $\Psi_2(T)>2$. Application of Theorem \ref{qcont} finishes the proof of the forward implication. The converse implication follows directly from the last statement of  Theorem \ref{qcont}.
\end{proof}

\section{Relations to other  concepts}

\subsection{Dilation result}\label{Mathias}

In \cite{mathias1994induced} Mathias and Okubo showed that
$$
\nu_\rho(T)= \nu(B_\rho\otimes T), \quad B_\rho=\mat{cc} 0 & 2\sqrt{\frac{2-\rho}\rho} \\ 0 & \frac{2(\rho -1)}\rho \rix, \quad \rho\in[1,2],
$$
i.e. the sets $W^\rho(T)$ and $W(B_\rho\otimes T)$ have the same maximal absolute value.
We show the following.
\begin{theorem}
For any complex square matrix $T$ we have
$$
W^\rho(T)\subseteq W(B_\rho\otimes T), \quad  \rho\in[1,2] .
$$
\end{theorem}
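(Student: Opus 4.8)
The plan is to prove the stronger pointwise statement that the generating set $S:=\set{\xi_\rho(h)\seq{Th,h}:\norm h=1}$ is already contained in the numerical range $W(B_\rho\otimes T)$, and then invoke softness. Indeed, for $\rho\in[1,2]$ we have $\dom(\xi_\rho)=\set{h\in\h:\norm h=1}$ by Proposition~\ref{basicxi}\eqref{(ii)}, so $W^\rho(T)=\conv\overline S$. Since $T$ is a square matrix, $B_\rho\otimes T$ is again a matrix, whence $W(B_\rho\otimes T)$ is compact and convex by the Toeplitz--Hausdorff theorem; consequently $S\subseteq W(B_\rho\otimes T)$ immediately upgrades to $W^\rho(T)=\conv\overline S\subseteq W(B_\rho\otimes T)$. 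Thus the whole matter reduces to realising each point of $S$ as a value $\seq{(B_\rho\otimes T)u,u}$ with $\norm u=1$.

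Throughout I would use the variable $r=2(1-\tfrac1\rho)\in[0,1]$ and write $B_\rho=\matp{0 & \alpha\\ 0 & r}$ with $\alpha=2\sqrt{1-r}$. Writing a vector of $\Comp^2\otimes\Comp^n$ as $u=e_1\otimes a+e_2\otimes b$, the relations $B_\rho e_1=0$ and $B_\rho e_2=\alpha e_1+re_2$ give
\[
\seq{(B_\rho\otimes T)u,u}=\alpha\seq{Tb,a}+r\seq{Tb,b},\qquad \norm u^2=\norm a^2+\norm b^2 .
\]
Fixing a unit $h$ and abbreviating $c=\seq{Th,h}$, $\mu=|c|^2$, $\tau=\norm{Th}^2$, $\xi=\xi_\rho(h)$, the ansatz I would take is $b=sh$ and $a=\lambda\,Th$ for a real $s\ge0$ and a complex scalar $\lambda$. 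With this choice $\seq{(B_\rho\otimes T)u,u}=\alpha s\bar\lambda\tau+rs^2c$, and fixing the modulus and phase of $\lambda$ by $\alpha s\bar\lambda\tau=(\xi-rs^2)c$ turns the right-hand side into exactly $\xi c=\xi_\rho(h)\seq{Th,h}$. It then remains only to enforce $\norm u=1$, i.e.\ $\norm a^2=1-s^2$.

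Setting $x=s^2$, the constraint $\norm a^2=1-x$ becomes the scalar equation $(\xi-rx)^2\mu=4(1-r)\tau\,x(1-x)$. The crucial observation is that this is a quadratic in $x$ whose leading coefficient is precisely $\Delta_\rho(h)=r^2\mu+4(1-r)\tau$, and that the definition of $\xi_\rho(h)$ furnishes the identity $\mu\,\xi(\xi-r)=(1-r)\tau$ (square the relation $2\xi-r=\sqrt{\Delta_\rho(h)}/\sqrt\mu$). Substituting this identity collapses the quadratic to the perfect square $\bigl[(2\xi-r)x-\xi\bigr]^2=0$, so the desired value is the double root $x=\xi/(2\xi-r)$. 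One checks $x\in(0,1]$ easily: positivity since $2\xi\ge r$, and $x\le1\iff\xi\ge r\iff\Delta_\rho(h)\ge r^2\mu$, which holds for $r\le1$. This yields the required unit vector $u$.

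Two degenerate situations need a remark. If $c=\seq{Th,h}=0$ the target point is $0$, and $0\in\sigma(B_\rho\otimes T)\subseteq W(B_\rho\otimes T)$ (take $u=e_1\otimes v$), so nothing is needed. If $\rho=2$, i.e.\ $r=1$, then $\alpha=0$ and $x=1$, forcing $a=0$, $b=h$, and $u=e_2\otimes h$ gives $\seq{(B_2\otimes T)u,u}=\seq{Th,h}$ directly. The hard part will be precisely the discovery of the ansatz together with the identity $\mu\,\xi(\xi-r)=(1-r)\tau$ that makes the constraint a perfect square; once these are in hand the remaining verification is routine algebra.
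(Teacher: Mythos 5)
Your proof is correct, and it shares the paper's core construction: both arguments realise each generating point $\xi_\rho(h)\seq{Th,h}$ as $\seq{(B_\rho\otimes T)u,u}$ for a unit vector of $\Comp^{2n}$ whose two tensor components are a scalar multiple of $Th$ and a scalar multiple of $h$, and both then conclude by convexity and compactness of $W(B_\rho\otimes T)$. Where you differ is in how the two scalars are produced. The paper introduces the auxiliary matrix $X=\matp{0 & 2\sqrt{1-r}\norm{Th}\\ 0 & r\seq{Th,h}}$, observes via the $2\times2$ elliptical range theorem that $\nu(X)=\xi_\rho(h)\seq{Th,h}$, and takes the coefficients from a (non-explicit) unit vector attaining this numerical radius; you instead impose the normalisation constraint directly and discover that, thanks to the identity $\mu\,\xi(\xi-r)=(1-r)\tau$ encoded in the definition of $\xi_\rho$, the resulting quadratic collapses to the perfect square $\bigl[(2\xi-r)x-\xi\bigr]^2=0$, giving the explicit value $x=\xi/(2\xi-r)\in(0,1]$. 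Your route is more elementary (no appeal to the $2\times2$ numerical range formula) and produces an explicit attaining vector, at the cost of more algebra; the paper's is shorter but leaves the coefficients implicit. You are also more careful with the degenerate cases: the paper's reduction to $\seq{Th,h}>0$ and its formula for $f$ (which divides by $\norm{Th}$) silently skip the vectors with $\seq{Th,h}=0$, which for $\rho\in[1,2]$ do contribute the point $0$ to the generating set; your remark that $0\in\sigma(B_\rho\otimes T)\subseteq W(B_\rho\otimes T)$ closes that gap cleanly, as does your separate treatment of $\rho=2$.
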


\begin{proof}
Let us take a point $\xi_\rho(h)\seq{Th,h}\in W^\rho(T)$ with $\norm h=1$. Without loss of generality, multiplying $T$ by a constant if necessary,  we may assume that $\seq{Th,h}>0$, so that
$$
\xi_\rho(h)\seq{Th,h}=\frac12\left( r\seq{Th,h}+\sqrt{r^2 \seq{Th,h}^2+4(1-r)\norm{Th}^2  }  \right ),
$$
where $r=2-\frac2\rho$.
Note that by the formula for the numerical range of a $2\times 2$ matrix (cf. e.g. \cite[Chapter 1]{GustafsonRao}) applied to
$$
X=\mat{cc}  0 & 2 \sqrt{1-r}\norm{Th} \\ 0 & r\seq{Th,h}\rix,
$$
we have $\xi_\rho(h)\seq{Th,h}=\nu(X)$.  Hence, there exist a unit vector $x=[x_1,x_2]^\top\in\Comp^2$, $x_2\neq 0$, such that
\begin{equation}\label{cccc}
\xi_\rho(h)\seq{Th,h}=\seq{Xx,x}=2 \sqrt{1-r}\norm{Th}\bar x_1x_2 +r\seq{Th,h}|x_2|^2.
\end{equation}
We set
$$
f=\frac{  x_1\bar x_2  }{|x_2|\norm{Th}}Th,\quad g=|x_2|h.
$$
Note that  $F=[f,g]^\top$ is a unit vector in $\Comp^{2n}$. Furthermore,
 $$
 \seq{  (B_\rho\otimes T)  F,F} = 2\sqrt{\frac{2-\rho}\rho}\seq{Tg,f}+\frac{2(\rho -1)}\rho \seq{Tg,g},
 $$
 which is the right hand side of \eqref{cccc}. In consequence, $\xi_\rho(h)\seq{Th,h}\in W(B_\rho\otimes T)$.

\end{proof}

\begin{remark}
Note that if $T$ is Hermitian, then $W^\rho(T)$ is contained in the real line, while $B_\rho\otimes T$ is usually not Hermitian. Hence, the converse inclusion clearly does not hold in general. Also, recall that  $W(B_\rho)\cdot W(T)\subseteq W(B_\rho\otimes T)$, but, in general, $W^\rho(T)$ is not a subset of $W(B_\rho)\cdot W(T)$ due to $\nu(B_\rho)=1$.
\end{remark}
Example \ref{rqex} at the end of the paper contains a plot of the sets $W^\rho(T)$ and $W(B_\rho\otimes T)$ for a non-normal matrix $T$.

\subsection{$q$-numerical range} Let $T$ be a matrix. The {\em $q$-numerical range} $W(T:q)$ is defined as
$$
W(T:q)=\set{\seq{Th,k}: \norm h=\norm k=1,\ \seq{h,k}=q},\quad q\in[0,1].
$$
It is known that the $q$-numerical range is convex and that
\begin{equation}\label{Tsing}
W(T:q)=\set{q\seq{Th,h}+\sqrt{1-q^2}w\sqrt{\norm{Th}^2-|\seq{Th,h}|^2}:\norm h=1,\ |w|\leq 1}
\end{equation}
 (cf. \cite{tsing1984constrained}).  Clearly, $q^{-1}W(T:q)\supseteq W(T)$, in particular \eqref{VV} holds for $\Omega=q^{-1}W(T:q)$.  In fact, by Theorem 2.7 of \cite{li1994generalized}, all the eigenvalues of $T$ are in the interior of $q^{-1}W(T:q)$ and \eqref{VV} holds simply by continuity of the Cauchy integral.
  Furthermore, we have the following inclusion.
 \begin{theorem}
Let $T$ be a complex square matrix and let $q\in(0,1]$, $\rho\in[1,+\infty)$. Then the inclusion
\begin{equation}\label{WrhoWq}
W^\rho(T)\subseteq q^{-1}W(T:q)
\end{equation}
hold, provided at least one of the following conditions is satisfied
\begin{enumerate}[\rm (i)]
\item $0< q\leq\sqrt2/2$; \label{simple}
\item $\sqrt2/2<q\leq 1$ and $2 q^2 \leq \rho \leq \frac{2q^2}{2q^2-1}$;\label{compli}
\item $0\in  W(T)$ and $2q^2\leq \rho$.\label{withW}
\end{enumerate}
 \end{theorem}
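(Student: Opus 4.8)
The plan is to exploit convexity. Since the $q$-numerical range is convex, so is $q^{-1}W(T:q)$, and (for a matrix) it is compact; hence it suffices to place each generator $\xi_\rho(h)\seq{Th,h}$, $h\in\dom(\xi_\rho)$, inside $q^{-1}W(T:q)$. Fix a unit vector $h$ and abbreviate $a=\seq{Th,h}$, $\alpha=|a|$, $\beta=\norm{Th}$, $r=2(1-\tfrac1\rho)$, $\xi=\xi_\rho(h)$, $s^2=\tfrac{1-q^2}{q^2}$. First I would dispose of the degenerate cases: if $a=0$ the generator equals $0=\seq{Th,h}\in W(T)\subseteq q^{-1}W(T:q)$, and if $h$ is an eigenvector the generator is the corresponding eigenvalue, which lies in the interior of $q^{-1}W(T:q)$ by Theorem~2.7 of \cite{li1994generalized} as quoted above. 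So the heart of the matter is a non-eigenvector $h$ with $a\neq0$, i.e. $\alpha<\beta$.

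For such $h$ the formula \eqref{Tsing}, used with the \emph{same} $h$ and $w$ ranging over $|w|\leq1$, shows that $q^{-1}W(T:q)$ contains the whole closed disc centred at $a$ of radius $R=\tfrac{\sqrt{1-q^2}}{q}\sqrt{\beta^2-\alpha^2}$. Because $\xi>0$, the generator $\xi a$ lies on the ray from the origin through $a$, so the inclusion reduces to the single scalar inequality $\alpha\,|\xi-1|\leq R$, that is
\begin{equation*}
\alpha^2(\xi-1)^2\leq s^2(\beta^2-\alpha^2).
\end{equation*}
To remove the square root hidden in $\xi$, I would square the defining relation $2\xi-r=\sqrt{\Delta_\rho(h)}/\alpha$ to obtain the clean identity $\xi(\xi-r)\alpha^2=(1-r)\beta^2$, whence for $r\neq1$
\begin{equation*}
\beta^2-\alpha^2=\alpha^2\,\frac{(\xi-1)(\xi-r+1)}{1-r}.
\end{equation*}
Substituting, cancelling $\alpha^2>0$, and running a short sign discussion (dividing by $\xi-1$, whose sign is opposite to that of $1-r$, and then multiplying by $1-r$) turns the target in both regimes $r<1$ and $r>1$ into the single affine-in-$\xi$ condition $(\xi-1)(1-r)-s^2(\xi-r+1)\leq0$.

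The remaining, and most delicate, step is to verify this affine inequality over the \emph{exact} range of $\xi=\xi_\rho(h)$. Writing $\tau=\beta^2/\alpha^2\geq1$ one sees that $\xi\in[1,+\infty)$ when $r<1$ (so $\xi\geq1$) and $\xi\in(r/2,1]$ when $r>1$ (so $\xi\leq1$), the endpoint $\xi=r/2$ being the worst case although it is not attained. Since the left-hand side is affine in $\xi$ with nonpositive leading coefficient $(1-r)-s^2$, its supremum over the admissible range sits at the smallest $\xi$, and a direct evaluation shows the inequality holds for every admissible $\xi$ precisely when $s^2\geq1-r$ (for $\rho<2$) and when $s^2\geq r-1$ (for $\rho>2$); the value $\rho=2$ is trivial as the generator is then $a\in W(T)$. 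Translating through $s^2=\tfrac1{q^2}-1$, $1-r=\tfrac{2-\rho}{\rho}$, $r-1=\tfrac{\rho-2}{\rho}$ gives $\rho\geq2q^2$ in the first regime and $\rho\leq\tfrac{2q^2}{2q^2-1}$ in the second. Matching with the hypotheses: condition \eqref{simple} ($q\leq\sqrt2/2$, i.e. $s^2\geq1$) forces both thresholds for every $\rho$; condition \eqref{compli} is exactly the pair of thresholds when $q>\sqrt2/2$; and for condition \eqref{withW} the range $\rho\in[2q^2,2]$ is covered by the first threshold, while for $\rho\geq2$ the hypothesis $0\in W(T)=W^2(T)$ lets me invoke Theorem~\ref{monocont}\eqref{mono1} with $\rho_1=2$ to conclude $W^\rho(T)\subseteq W^2(T)=W(T)\subseteq q^{-1}W(T:q)$ directly, bypassing the computation (which would otherwise fail for $\rho>\tfrac{2q^2}{2q^2-1}$). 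The main obstacle is precisely this bookkeeping: tracking the sign of $\xi-1$ together with the asymmetric, and only limitingly attained, range of $\xi$ in the two regimes $\rho<2$ and $\rho>2$, which is what makes the three hypotheses genuinely distinct.
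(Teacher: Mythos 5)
Your proposal is correct and follows essentially the same route as the paper: both reduce, via Tsing's formula \eqref{Tsing}, to the single scalar condition that $\xi_\rho(h)\seq{Th,h}$ lie within distance $q^{-1}\sqrt{1-q^2}\,\sqrt{\norm{Th}^2-|\seq{Th,h}|^2}$ of $\seq{Th,h}$, and then verify this from the thresholds $\rho\geq 2q^2$ and $\rho\leq 2q^2/(2q^2-1)$. The only (harmless) deviations are organizational: you eliminate the square root through the identity $\xi(\xi-r)\alpha^2=(1-r)\beta^2$ and analyze an affine function of $\xi$, where the paper manipulates the square-root inequality directly, and in case \eqref{withW} with $\rho\geq2$ you invoke Theorem~\ref{monocont}\eqref{mono1} where the paper simply observes that $0\in q^{-1}W(T:q)$ makes the left inequality superfluous.
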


 \begin{proof}
 As $W(T:q)$ is closed and convex, it is enough to show that $\xi_\rho(h)\seq{Th,h}\in q^{-1}W(T:q)$ for arbitrary unit vector $h\in\dom(\xi_\rho)$,  let us fix such $h$. Without loss of generality, multiplying $T$ by a constant if necessary,  we may assume that $\seq{Th,h}>0$.

First let us consider the cases \eqref{simple} and \eqref{compli} simultaneously.
  By formula \eqref{Tsing}, it is enough to show the inequalities
 \begin{equation}\label{toshow}
  \alpha-\sqrt{q^{-2}-1}\sqrt{\beta^2-\alpha^2}\leq\frac12\left(  r  \alpha+\sqrt{ r  ^2 \alpha^2+4(1- r )\beta^2  }\right)\leq \alpha+\sqrt{q^{-2}-1}\sqrt{\beta^2-\alpha^2},
  \end{equation}
 where $0<\alpha:=\seq{Th,h}\leq\norm{Th}=:\beta$ and  $r=2(1-\rho^{-1})$.
 To see  the right inequality of \eqref{toshow} note that $\rho\geq 2q^2$ in both cases \eqref{simple} and \eqref{compli}, due to $\rho\geq 1$. This can be written as
 $$
 1-r\leq q^{-2}-1.
 $$
 Multiplying both sides by $\beta^2-\alpha^2\geq 0$ and adding
 $(2-r)\alpha \sqrt{(q^{-2}-1)(\beta^2-\alpha^2)}\geq0$ to the right hand side we obtain
 $$
 \frac{\alpha^2r^2}4  + (1-r)\beta^2\leq \left(1-\frac r2\right)^2\alpha^2+  (q^{-2}-1)(\beta^2-\alpha^2) + (2-r)\alpha \sqrt{(q^{-2}-1)(\beta^2-\alpha^2)}.
 $$
 Note that the left hand side above is nonnegative, due to the assumption $h\in\dom(\xi_\rho)$. Taking square roots and adding $r\alpha/2$ to both sides, we get the desired right hand side of \eqref{toshow}.

 Now let us show the left hand side of \eqref{toshow}. First observe that
\begin{equation}\label{qr}
r-1\leq q^{-2}-1.
\end{equation}
 Indeed, in case  \eqref{simple} one simply has $r-1\leq 1 \leq q^{-2}-1$, while in  case \eqref{compli} one has $2q^2-1>0$, and  hence, the inequality \eqref{qr} follows by simple manipulations of $\rho\leq 2q^2/(2q^2-1)$. Multiplying both sides of \eqref{qr} by $\beta^2-\alpha^2\geq 0$ one obtains
 $$
 \alpha^2\left(1-\frac r2\right)^2\leq (q^{-2}-1)(\beta^2-\alpha^2)+ \frac{r^2\alpha^2}4+(1-r)\beta^2.
 $$
 Hence,
 $$
 \alpha\left(1-\frac r2\right)\leq \sqrt{(q^{-2}-1)(\beta^2-\alpha^2)}+ \frac12\sqrt{{r^2\alpha^2}+4(1-r)\beta^2},
 $$
 where the expression under the second square root is nonnegative due to $h\in\dom(\xi_q)$. This finishes the proof of \eqref{toshow} and the proof in cases \eqref{simple} and \eqref{compli}.

Consider now the case \eqref{withW}. Then, as $0\in W(T)\subseteq q^{-1}W(T:q)$, one needs to prove only the right hand inequality of \eqref{toshow}. This follows the same lines as before, starting from  $\rho\geq 2q^{2}$.
 \end{proof}

Example \ref{rqex} at the end of the paper contains a plot of the sets $W^\rho(T)$ and $q^{-1}W(T:q)$ for a non-normal matrix $T$.

\subsection{Normalised numerical range}

For a bounded operator $T$ on a Hilbert space the normalised numerical range is defined as
$$
F_N(T)=\set{ \frac{\seq{Tx,x}}{\norm{Tx}\norm x}:  \norm{Tx}\neq 0    }.
$$
Although the definition seems to be similar to $W^\rho(T)$ (especially if $\rho=1$), there seems to be no clear relation as with the $q$-numerical range or $W(B_\rho\otimes T)$ as above. However, while proving the spectral inclusion in Section \ref{Operators} we have contributed to the following result.

\begin{proposition}
For any nonzero bounded operator $T$ on a Hilbert space the closure of $F_N(T)$ contains some point on the unit circle.
\end{proposition}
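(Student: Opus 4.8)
The plan is to reduce the statement to a one-line supremum computation and then re-use the case analysis that was developed for Theorem \ref{basic}\eqref{sigma}. By the Cauchy--Schwarz inequality $|\seq{Tx,x}|\leq\norm{Tx}\norm x$, so $F_N(T)\subseteq\overline\Disc$. Consequently it suffices to show that
$$
\sup\set{\frac{|\seq{Th,h}|}{\norm{Th}} : \norm h=1,\ Th\neq 0}=1,
$$
since from a sequence of unit vectors $h_k$ along which this supremum is approached one may extract, by compactness of $\overline\Disc$, a subsequence of the points $\seq{Th_k,h_k}/\norm{Th_k}\in F_N(T)$ converging to a limit of modulus one, which then lies in $\overline{F_N(T)}$. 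Thus the whole task is to produce unit vectors $h_k$ with $|\seq{Th_k,h_k}|/\norm{Th_k}\to 1$.

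First I would dispose of the case in which $T$ has a nonzero approximate eigenvalue. If $\lambda\in\sigma_{\ap}(T)\setminus\set0$, pick unit vectors $h_n$ with $\norm{Th_n-\lambda h_n}\to 0$; then $\seq{Th_n,h_n}\to\lambda$ and $\norm{Th_n}\to|\lambda|$, so the quotient tends to $|\lambda|/|\lambda|=1$. This is exactly the argument opening the proof of Theorem \ref{basic}\eqref{sigma}.

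It remains to treat $\sigma_{\ap}(T)=\set0$. Since $\partial\sigma(T)\subseteq\sigma_{\ap}(T)$ (see \cite[Theorem 2.5]{kubrusly2012spectral}), the boundary of $\sigma(T)$ reduces to $\set0$; as $\sigma(T)\setminus\set0$ is then open and closed in the connected set $\Comp\setminus\set0$ and bounded, it must be empty, so $\sigma(T)=\set0$ and $T$ is quasinilpotent. If $T$ is not nilpotent, Proposition \ref{Pquasi} furnishes unit vectors $h_k$ with precisely the required convergence \eqref{hkl}. If $T$ is nilpotent, say $T^k=0$ with $T^{k-1}\neq0$ and $k\geq 2$, I would take $v$ with $T^{k-1}v\neq0$, put $w=T^{k-2}v$ and $a=Tw=T^{k-1}v\neq0$, so that $Ta=T^kv=0$; then for real $\eps>0$ the vectors $x_\eps=a+\eps w$ satisfy $Tx_\eps=\eps a$, whence
$$
\frac{|\seq{Tx_\eps,x_\eps}|}{\norm{Tx_\eps}\norm{x_\eps}}=\frac{\big|\,\norm a^2+\eps\seq{a,w}\,\big|}{\norm a\,\norm{x_\eps}}\longrightarrow 1,\quad \eps\to 0^+,
$$
since $\norm{x_\eps}\to\norm a$. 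Normalising $x_\eps$ gives the desired unit vectors.

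The routine verifications (Cauchy--Schwarz, the two limits in the approximate-eigenvalue case, and the elementary expansion in the nilpotent case) are immediate. The main obstacle is organisational rather than computational: one must recognise that ``the closure meets the unit circle'' is equivalent to the scalar supremum equalling $1$, and then isolate the genuinely separate quasinilpotent behaviour, where the non-nilpotent part is exactly the content of Proposition \ref{Pquasi} while the nilpotent part requires the short independent construction above.
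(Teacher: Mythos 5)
Your proof is correct and follows essentially the same route as the paper: the same trichotomy (nonzero approximate eigenvalue, nilpotent, quasinilpotent-but-not-nilpotent), the same explicit two-vector construction in the nilpotent case, and the same appeal to Proposition \ref{Pquasi} for the genuinely hard case. The only difference is cosmetic: where the paper cites \cite{gevorgyan2006some} and \cite{spitkovsky2017normalized} for the nonzero-approximate-spectrum case, you give the (equally short) direct argument, making the proof self-contained.
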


\begin{proof}
It was shown in Proposition 7 of \cite{gevorgyan2006some}, see also \cite[Proposition 1.2]{spitkovsky2017normalized}, that the result holds if there is a nonzero point in the approximative spectrum of $T$. Hence, the only case left to consider is the case when
the approximative spectrum equals $\set0$, i.e., $T$ is a quasinilpotent operator.

If $T\neq 0$ is  nilpotent, then there exist two unit vectors $f_2,f_1$, not necessarily orthogonal, such that $Tf_2=\alpha f_1$, $\alpha\in\Comp\setminus\set0$,  $Tf_1=0$. Then observe that with $t>0$
$$
\frac{|\seq{T(f_1+t f_2),(f_1+tf_2)}|}{\norm{T(f_1+t f_2)}\norm{f_1+tf_2}}=\frac{(t+t^2\seq{f_1,f_2})|\alpha|}{t|\alpha|\sqrt{1+t^2} }   \to 1,\quad t\to 0.
$$

The remaining case of a qusinilpotent, but not nilpotent operator, is the essence of Proposition \ref{Pquasi} above.
\end{proof}

\subsection{Davies-Wielandt shell}\label{ss:shell}

In this subsection we will present some facts about the set
$$
V_\rho(T):=\set{\xi_\rho(h)\seq{Th,h}:\norm h=1, \ \seq{Th,h}\neq 0},\quad \rho\in[1,2].
$$
Note that by definition our deformed numerical range equals $\overline{\conv}(V_{\rho}(T)\cup\set 0)$ if $0\in W(T)$ and    $\overline{\conv}(V_{\rho}(T))$
in the opposite case. Also let us recall that the Davies-Wielandt shell is defined (\cite{MR243373,MR73558}) as
$$
DW(T):=\set{(\RE\seq{ T h,h},\IM\seq{ T h,h},\norm{Th}^2):\norm h=1}\subseteq\Real^3.
$$
See, e.g., \cite{MR2440672} for basic properties, one of which is that $DW(T)$ is convex  if $\dim \h\geq 3$.

We will follow the idea of presenting the set under investigation as a continuous image of the the Davies-Wielandt shell, applied in
\cite{lins2018normalized} to the normalised numerical range $F_N(T)$. In particular, it was shown in \cite{lins2018normalized} that $F_N(T)$ is path connected and simply connected. See also \cite{chien2002davis} for the connection with the $q$-numerical range $W(T:q)$. However, the situation with the set $V_\rho(T)$ is slightly more technical.

\begin{proposition}\label{Vshape} Let $T$ be a bounded operator on a Hilbert space $\h$ with $\dim \h\geq 3$, assume that $\alpha T+\beta$ is not selfadjoint  for all $\alpha,\beta\in\Comp$. Then for all $\rho\in[1,2]$ the set $V_\rho(T)$ defined above is path-connected. Furthermore, $V_\rho(T)$ is disjoint with the open disc 
 $\sqrt{2\rho^{-2}-1}\ \sigma_{\min}(T)\Disc$, where $\sigma_{\min}(T)$ denotes the smallest singular value of $T$.
\end{proposition}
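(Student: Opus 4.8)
The plan is to realise $V_\rho(T)$ as a continuous image of a deleted Davies--Wielandt shell. Writing $a=\RE\seq{Th,h}$, $b=\IM\seq{Th,h}$, $c=\norm{Th}^2$ and $r=2(1-\rho^{-1})\in[0,1]$, I note that for a unit $h$ with $\seq{Th,h}\neq0$ one has $a^2+b^2=|\seq{Th,h}|^2>0$ and
$$
\xi_\rho(h)\seq{Th,h}=\tfrac12\Big(r(a+\ii b)+\tfrac{a+\ii b}{\sqrt{a^2+b^2}}\sqrt{r^2(a^2+b^2)+4(1-r)c}\Big)=:\Phi_\rho(a,b,c).
$$
Since $r\in[0,1]$ and $c=\norm{Th}^2\geq|\seq{Th,h}|^2=a^2+b^2>0$, the radicand is strictly positive and the denominator does not vanish on $DW(T)\setminus L$, where $L=\set{(0,0,c):c\in\Real}$ is the vertical axis. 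Hence $\Phi_\rho$ is continuous there, and, because for $\rho\in[1,2]$ the domain $\dom(\xi_\rho)$ is the whole unit sphere, by construction $V_\rho(T)=\Phi_\rho(DW(T)\setminus L)$.

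Since a continuous image of a path-connected set is path-connected, it suffices to show $DW(T)\setminus L$ is path-connected. Recall that $DW(T)$ is convex because $\dim\h\geq3$, and that its projection $\pi(a,b,c)=(a,b)$ onto the first two coordinates is exactly $W(T)$. The hypothesis that $\alpha T+\beta$ is never selfadjoint means precisely that $W(T)$ lies on no line of $\Comp$, i.e.\ $W(T)$ is two-dimensional. I would then join two given points $u,v\in DW(T)\setminus L$ as follows: the ``bad'' projections $p$ for which $0\in[\pi(u),p]$ or $0\in[\pi(v),p]$ fill only the two rays $\set{-s\pi(u):s\geq0}$ and $\set{-s\pi(v):s\geq0}$ emanating from the origin, so by two-dimensionality of $W(T)$ one may choose $w\in DW(T)$ whose projection is nonzero and lies off both rays. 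Then $[u,w]$ and $[w,v]$ lie in the convex set $DW(T)$ and project onto segments of $W(T)$ missing the origin, hence avoid $L$; concatenating them joins $u$ to $v$ inside $DW(T)\setminus L$.

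For the second assertion I would bound $|\xi_\rho(h)\seq{Th,h}|$ below. With $\alpha=|\seq{Th,h}|$, $\beta=\norm{Th}$ and $r\in[0,1]$, discarding the two nonnegative terms arising in the square,
$$
\big|\xi_\rho(h)\seq{Th,h}\big|^2=\tfrac14\big(r\alpha+\sqrt{r^2\alpha^2+4(1-r)\beta^2}\big)^2\geq(1-r)\beta^2\geq(1-r)\,\sigma_{\min}(T)^2,
$$
the last step using $\norm{Th}\geq\sigma_{\min}(T)$. Since $1-r=2\rho^{-1}-1\geq2\rho^{-2}-1$ for $\rho\geq1$, every point of $V_\rho(T)$ has modulus at least $\sqrt{2\rho^{-2}-1}\,\sigma_{\min}(T)$, so $V_\rho(T)$ is disjoint from the open disc $\sqrt{2\rho^{-2}-1}\,\sigma_{\min}(T)\Disc$ (the statement being vacuous for $\rho>\sqrt2$, where the radius is not real).

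I expect the main obstacle to be the path-connectedness step, specifically pinning down which degeneracy the hypothesis excludes. The danger is that $DW(T)$ collapses into a plane through the axis $L$, which happens exactly when some $\alpha T$ is selfadjoint; in that case $L$ could cut the shell into two pieces and $V_\rho(T)$ would genuinely disconnect (as the selfadjoint case already shows). Verifying that ``$W(T)$ two-dimensional'' rules this out and that the intermediate point $w$ can always be selected is the delicate part, whereas the disjointness estimate is a routine one-variable inequality.
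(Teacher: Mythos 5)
Your proof is correct and follows essentially the same route as the paper's: both realise $V_\rho(T)$ as a continuous image of the Davies--Wielandt shell with the vertical axis removed, exploit convexity of $DW(T)$ for $\dim\h\geq3$ together with the two-dimensionality of $W(T)$ forced by the non-selfadjointness hypothesis, and prove the disjointness via the bound $|\xi_\rho(h)\seq{Th,h}|\geq\sqrt{1-r}\,\norm{Th}\geq\sqrt{1-r}\,\sigma_{\min}(T)$. Your explicit two-segment path argument is a cleaner, unified version of the paper's case split (shell in a hyperplane versus full-dimensional shell, the latter dismissed there as ``simple topological reasoning''), and your remark that $1-r=2\rho^{-1}-1\geq2\rho^{-2}-1$ correctly reconciles the natural radius $\sqrt{1-r}\,\sigma_{\min}(T)$ (the one the paper itself uses in its final example) with the weaker radius printed in the statement.
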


\begin{proof}
Observe that for $\rho\in[1,2]$ the set $V_\rho(T)$
is a continuous image of the following subset of the Davies-Wielandt shell
$$
DW_0(T):=\set{(\RE\seq{ T h,h},\IM\seq{ T h,h},\norm{Th}^2):\norm h=1,\ \seq{Th,h}\neq0}.
$$
Hence,  $V_\rho(T)$ is path-connected, provided that $DW_0(T)$ is path connected. The latter follows from the following reasoning.
As $DW(T)$ is convex, it is either  contained in an affine hyperplane or has a nonempty interior in $\Real^3$.
In the former case the projection on the first two variables is a homeomorphism. Note that  $W(T)$ is the image of this projection and it has, by assumption on $T$,  a nonempty interior in $\Real^2$. Thus $W(T)\setminus \set0$, and consequently $DW_0(T)$, are path connected. In the latter case, i.e. if $DW(T)$ is homeomorphic with a closed ball in $\Real^3$, path connectedness of $DW_0(T)$ follows from a simple topological reasoning.

The following estimate for arbitrary unit $h$ shows the second claim
$$
|\xi_\rho(h)\seq{Th,h}|\geq \sqrt{1-r}\norm{Th}\geq \sqrt{1-r}\ \sigma_{\min}(T),\quad r=2\left(1-\frac1\rho\right).
$$
\end{proof}

\begin{example} Let $T$ be a Hermitian invertible matrix, having both positive and negative eigenvalues. By Theorem
 \ref{basic}\eqref{0}  the set $V_\rho(T)$ ($\rho\in[1,2]$) is contained in the real line. By Proposition  \ref{basicxi}\eqref{(iv)} it contains the eigenvalues. Hence, by Proposition \ref{Vshape},  $V_\rho(T)$ is not connected.
 \end{example}

The following Example shows a typical situation, where the set $V_\rho(T)$ is connected, but not simply connected. We illustrate also the here the results from previous subsections.

\begin{example}\label{rqex} Consider the matrix
$T=\matp{-1& 0& 0 \\ 0& 1& \ii \\ 0& 1& 0 }   $ and the parameters $q=0.9$, $\rho=2q^2=1.62$. In Figure \ref{pictures7} one may find the eigenvalues (black stars) and the plots of $V_\rho(T)$ (green), $W(B_\rho\otimes T)$ (blue), and $q^{-1}W(T:q)$ (red).
The red circle is of radius $\nu_\rho(T)$, the magenta circle is of radius $\norm T$, both centred at the origin. Note the following additional facts:
\begin{itemize}
\item $-1$ is a normal eigenvalue lying on the boundary of $W^\rho(T)$, cf. Lemma \ref{spectral}.
\item Decreasing the parameter $\rho$ leads to extending the set $W^\rho(T)$, for the values of $\rho$ near $1$ this eigenvalue is no longer on the boundary of  $W^\rho(T)$.
\item The hole in the green set $V_\rho(T)$ is due to Proposition \ref{Vshape}, here $\sqrt{1-r}\ \sigma_{\min}(T)\simeq 0.2993$.
\item One may observe that $W(B_{2q^2}\otimes T)\subseteq q^{-1}W(T:q)$, which is frequent situation.  However, this is not true in general, e.g. for $T=\matp{0 & 2\\ 0 & 0}$ the inclusion does not hold.
\end{itemize}

\begin{figure}
\begin{center}
 \includegraphics[width=400pt]{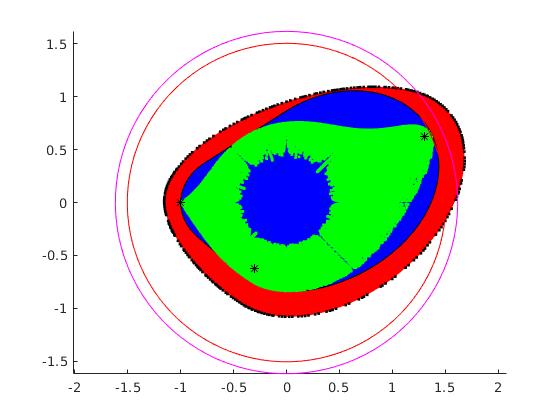}
\end{center}
\caption{Figure for Example \ref{rqex}.  \label{pictures7}}
\end{figure}

\end{example}

\section*{Acknowledgment} The authors would like to express their gratitude to the anonymous referee. The interesting questions posed in the report resulted in the last section of the paper.

\bibliographystyle{plain}
\bibliography{Literatura}

\end{document}